\newcommand{\Z}{{\mathbb Z}}
\newcommand{\Q}{{\mathbb Q}}
\newcommand{\R}{{\mathbb R}}
\newcommand{\Br}{\mathrm{Br}}
\newcommand{\Ga}{\mathrm{Gal}}
\newtheorem{thm}{Theorem}[section]
\newtheorem{lemma}[thm]{Lemma}
\newtheorem{prop}[thm]{Proposition}
\newtheorem{cor}[thm]{Corollary}
\newcommand{\gen}{\mathbf{gen}}
\newcommand{\Div}{\mathrm{Div}}
\newcommand{\Pic}{\mathrm{Pic}}
\begin{document}

\title[The finiteness of genus]{The finiteness of the genus of a finite-dimensional division algebra, and some generalizations}

\author[V.~Chernousov]{Vladimir I. Chernousov}
\author[A.~Rapinchuk]{Andrei S. Rapinchuk}
\author[I.~Rapinchuk]{Igor A. Rapinchuk}

\begin{abstract}
We prove that the genus of a finite-dimensional division algebra is finite whenever the center is a finitely generated field of any characteristic.
We also discuss potential applications of our method to other problems, including the finiteness of the genus of simple algebraic groups of type $\textsf{G}_2$. These applications involve the double cosets of adele groups of algebraic groups over arbitrary finitely generated fields: while over number fields these double cosets are associated with the class numbers of algebraic groups and hence have been actively analyzed, similar questions over more general fields seem to come up for the first time. In the Appendix, we link the double cosets with $\check{\rm C}$ech cohomology and indicate connections between certain finiteness properties involving double cosets (Condition (T)) and Bass's finiteness conjecture in $K$-theory.
\end{abstract}

\address{Department of Mathematics, University of Alberta, Edmonton, Alberta T6G 2G1, Canada}

\email{vladimir@ualberta.ca}

\address{Department of Mathematics, University of Virginia,
Charlottesville, VA 22904-4137, USA}

\email{asr3x@virginia.edu}

\address{Department of Mathematics, Michigan State University, East Lansing, MI
48824, USA}

\email{rapinch@math.msu.edu}

\maketitle

\vskip3mm

\centerline{\it\large To Louis Rowen on the occasion of his retirement}

\vskip5mm

\section{Introduction}\label{S:Intro}

Let $D$ be a finite-dimensional central division algebra over a field $K$. The genus $\gen(D)$ is defined to be the set of classes $[D']$ in the Brauer group $\Br(K)$ represented by central division $K$-algebras $D'$ that have the same maximal subfields as $D$ (the latter means that $D$ and $D'$ have the same degree $n$, and a degree $n$ extension $P/K$ admits a $K$-embedding $P \hookrightarrow D$ if and only if it admits a $K$-embedding $P \hookrightarrow D'$)\footnote{We refer the reader to \cite[\S5]{CRR1a} for different variations of the notion of the genus. The referee also suggested to define the genus of an element of the Brauer group as the genus of the underlying division algebra. While the latter notion may be useful, some caution needs to be exercised in interpreting its consequences: for example, we don't know whether the fact that the division $D$ and $D'$ have the same maximal subfields implies that the matrix algebras $\mathrm{M}_{\ell}(D)$ and $\mathrm{M}_{\ell}(D')$ have the same maximal subfield / \'etale subalgebras for any (or even some) $\ell > 1$. (On the other hand, it is known that if the matrix algebras $\mathrm{M}_{\ell}(D)$ and $\mathrm{M}_{\ell'}(D')$, where $D$ and $D'$ are central division algebras over $K$, have the same maximal \'etale subalgebras, then $\ell = \ell'$ and $D$ and $D'$ have the same maximal separable subfields, cf. \cite[Lemma 2.3]{RR}.)}. One of the goals of the current paper is to give a simple proof of the following
\begin{thm}\label{T:Finite1}
Let $K$ be a finitely generated field. For any finite-dimensional central division algebra $D$ over $K$, the genus $\gen(D)$ is finite.
\end{thm}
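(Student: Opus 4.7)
The strategy is to compare $D$ with any $D' \in \gen(D)$ via ramification on an arithmetic model of $K$, combining a local matching lemma with a finiteness theorem for unramified cohomology. Since $\deg D' = \deg D = n$, we have $\gen(D) \subset {}_n\Br(K)$; writing $n = n'p^e$ with $p$ the characteristic exponent of $K$, I would handle the prime-to-$p$ and $p$-primary parts of ${}_n\Br(K)$ separately. Because $K$ is finitely generated, one can choose a regular integral scheme $\fX$ of finite type over $\Z$ (or over $\F_p$ in positive characteristic) with function field $K$ on which $n'$ is invertible, and then shrink to an open subscheme $U \subset \fX$ with finite complement $S = \fX \setminus U$ so that $D$ is unramified at every $v \in U^{(1)}$.

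The technical core is a local compatibility lemma: for every $v \in U^{(1)}$ and every $D' \in \gen(D)$, the residue $\partial_v([D']) \in H^1(\kappa(v), \Z/n')$ vanishes. Since $D$ is unramified at $v$, I can find an unramified extension $L/K_v$ of degree $n$ embedded in $D \otimes K_v$ as a maximal subfield. Then, using Hensel's lemma together with weak approximation on $\fX$, I lift $L$ to a global degree-$n$ subfield $P \subset D$ with $v$ inert in $P$ and $P \otimes_K K_v \cong L$. The hypothesis embeds $P$ into $D'$, so $L$ is an unramified splitting field for $D' \otimes K_v$, which forces the Brauer class of $D' \otimes K_v$ to be unramified.

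It follows that $[D][D']^{-1}$ lies in the subgroup of ${}_{n'}\Br(K)$ unramified over $U$, which is controlled by $\he^2(U, \mu_{n'})$. The latter group is finite because $U$ is of finite type over $\Z[1/n']$ (Deligne's finiteness of étale cohomology with torsion coefficients). To conclude the prime-to-$p$ case, I still need to bound the residues $\partial_v([D'])$ at the finitely many $v \in S$: the same-maximal-subfields hypothesis, applied to unramified cyclic extensions of $K_v$ of degree dividing $n$ lifted to global subfields of $D$, should constrain each $\partial_v([D'])$ to a finite subset of $H^1(\kappa(v), \Z/n')$ determined by the ramification data of $D$.

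The main obstacle is the $p$-primary part in characteristic $p > 0$, where the étale sheaf $\mu_{p^e}$ is replaced by the logarithmic de Rham--Witt sheaf $W_e\Omega^1_{\log}$ and $\partial_v$ by Kato's wild residue maps. I expect the same overall outline to go through, but the local lifting lemma must now accommodate wildly ramified $p$-extensions with possibly imperfect residue fields, and one needs a finiteness theorem for unramified Kato cohomology on $\F_p$-models of finite type. This is where most of the genuine technical work will sit; the rest of the argument is a careful ramification analysis in the spirit of the authors' prior work on the genus over global and function fields.
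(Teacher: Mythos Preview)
Your local compatibility lemma, as stated, has a genuine gap. Having \emph{one} unramified maximal subfield of $D'\otimes_K K_v$ does \emph{not} force $[D'_v]$ to be unramified: the quaternion division algebra over a local field is ramified yet contains the unramified quadratic extension as a maximal subfield. The true statement (Lemma~\ref{L:unram} in the paper) requires the full strength of the hypothesis that $D$ and $D'$ share \emph{all} maximal subfields. The paper's proof passes to the underlying local division algebras $\Delta, \Delta'$ (which again share all maximal separable subfields, by \cite{RR}), and then argues with residue algebras: if $\Delta'$ were ramified, the center $\mathscr{E}'$ of its residue would be a nontrivial separable extension of $K^{(v)}$, and one constructs a maximal subfield of $\Delta^{(v)}$ linearly disjoint from $\mathscr{E}'$ (using that $K^{(v)}$ is finitely generated), lifts it, embeds it in $\Delta'$, and derives a contradiction. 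Your single-subfield lifting argument cannot see this.

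More importantly, the paper sidesteps your $p$-primary difficulty entirely by \emph{not} attempting to show that ${}_n\Br(K)_V$ is finite. Instead, after reducing to prime-power degree and fixing one maximal separable subfield $L\subset D$, the paper observes $\gen(D)\subset \Br(L/K)_{V}$ and proves finiteness of this \emph{relative} unramified Brauer group. The argument is characteristic-free: for a cyclic extension it comes down to the finite generation of $\Pic(L,V^L)$ and $\mathrm{U}(K,V^K)$ for a divisorial set $V$ (both classical facts for finitely generated fields), and the general prime-power case is handled by a short d\'evissage through a Sylow tower in the Galois closure (Propositions~\ref{P:cycl} and~\ref{P:primary}). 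No \'etale cohomology finiteness theorems, no logarithmic de Rham--Witt sheaves, and no wild residue analysis are needed. Your Step~5 (bounding residues at the bad places $S$) is also unnecessary in either approach: once $D'$ is unramified on $V\setminus S$, you are already inside a set whose finiteness you are trying to prove.
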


Earlier, we proved this fact in the case where the degree $n$ is prime to $\mathrm{char}\: K$ -- see \cite{CRR1}, \cite{CRR2}. The argument is based on the analysis of ramification with respect to valuations in a suitable set $V$ of discrete valuations of $K$, and consists of two parts. First, we proved that if $V$ satisfies some natural conditions, then we have the following estimate on the size of the genus:
\begin{equation}\tag{E}\label{E:estimate}
\vert \gen(D) \vert \leqslant \vert {}_n\Br(K)_V \vert \cdot \varphi(n)^r,
\end{equation}
where ${}_n\Br(K)_V$ consists of those elements in the $n$-torsion subgroup ${}_n\Br(K)$ that are unramified at all $v \in V$, $r$ is the number of those $v \in V$ where $D$ is ramified, and $\varphi$ is Euler's function (see \cite[Theorem 2.2]{CRR1}). We then showed that when $n$ is prime to the characteristic of a finitely generated field $K$, the field can be equipped with a set $V$ of discrete valuations for which the unramified Brauer group ${}_n\Br(K)_V$ is finite, of order that can be explicitly estimated in some cases (see \cite{CRR2}). This argument not only gives the finiteness of $\gen(D)$ but also provides a uniform bound on its size over all division algebras $D$ of a given degree $n$ and a given number $r$ of ramified places. In the present paper, we will show that the mere finiteness of $\gen(D)$ can be established by a simpler argument which doesn't give any estimates but on the other hand, works also in the case where $n$ is divisible by $\mathrm{char}\: K$. (We note that the assumption that $K$ is finitely generated is essential as the genus of a division algebra over an infinitely generated field can be infinite, cf. \cite{Meyer}, \cite{Tikhon}.)

The proof of Theorem \ref{T:Finite1} also uses the analysis of ramification, but bypasses some technical arguments developed in \cite{CRR1}, \cite{CRR2}. In particular, instead of the estimate (\ref{E:estimate}) we need the fact that (under some minor assumptions) if $D$ is unramified at all $v \in V$, then any $D'$ with the same maximal subfields retains this property - see Lemma \ref{L:unram}. The rest of the argument uses the following considerations. As we already described in \cite{CRR2}, to a given set $V$ of discrete valuations of a field $K$ satisfying a natural condition (see condition (A) in \S\ref{S:F1}), one can associate two groups: the Picard group $\Pic(K , V)$ and the group of units $\mathrm{U}(K , V)$. Our argument essentially shows that if an arbitrary field $K$ can be equipped with a set $V$ of discrete valuations such that the groups $\Pic(L , V^L)$ and $\mathrm{U}(L , V^L)$ are finitely generated for all finite separable  extensions $L/K$, where $V^L$ consists of all extensions of places from $V$ to $L$, then $\gen(D)$ is finite for any finite-dimensional division $K$-algebra $D$ (if $D$ is a quaternion algebra then it  suffices to require the finite generation of $\mathrm{U}(L , V^L)$ and $\Pic(L , V^L)$ for a single separable quadratic subfield $L$ of $D$).  On the other hand, a  set of discrete valuations $V$ with this property is available for any finitely generated field - one can take a ``divisorial" set of places for a model of $K$ of finite type. We note that the argument gives the finiteness of the unramified relative Brauer group $\Br(L/K)_V$ for any fixed  separable extension $L/K$ which is either cyclic or has a prime power degree -- while this is sufficient to establish the finiteness of $\gen(D)$ for a division $K$-algebra $D$ of any degree $n$, it does not give the finiteness of ${}_n\Br(K)_V$.

In \S\ref{S:G2}, we discuss how the method of proof of Theorem \ref{T:Finite1} can be potentially extended to other situations, in particular, to prove the finiteness of the genus $\gen_K(G)$ of simple $K$-groups of type $\textsf{G}_2$. For this purpose, the assumption that $\Pic(K , V)$ be finitely generated needs to be upgraded. What we actually use in the proof of Theorem \ref{T:Finite1} is the following consequence of this assumption: there exists a subset $V' \subset V$ with a finite complement $V \setminus V'$ such that $\Pic(K , V')$ is trivial. On the other hand, it is well-known that $\Pic(K , V)$ can be described in terms of ideles, and that this idelic description readily lends itself to a generalization in terms of double cosets of the adele groups of algebraic groups. So, along these lines, we formulate a condition, termed Condition (T), for an arbitrary algebraic $K$-group $G$, and then show that its truth for the group $G = \mathrm{GL}_{1 , D}$ (or even $\mathrm{GL}_{\ell , D}$ with $\ell \geqslant 1$) where $D$ is a quaternion algebra, implies the finiteness of some parts of the unramified cohomology group $H^3(K , \mu_2)_V$ where $\mu_2 = \{ \pm 1 \}$ (we recall that  so-called decomposable elements of $H^3(K , \mu_2)$ classify the $K$-forms of type $\textsf{G}_2$). It should be noted that while the double cosets of adele groups of algebraic groups over number fields have been studied rather extensively (cf. \cite[Ch. VIII]{PlRa}), no results over more general fields seem to be available in the literature. So, in \S\ref{S:T} we confirm (T) for split groups over the function fields of curves over finitely generated fields in the situation where $V$ is the set of all geometric places (Theorem \ref{T:CondT}) and for arbitrary tori over finitely generated fields with respect to divisorial $V$ (Proposition \ref{P:CondT}).

The Appendix that comprises \S\S5-7 was written after the main body of the paper had been accepted for publication. It provides an account of the variety of links between the analysis of double cosets and other issues. In particular, in \S5 we show that the map from $\check{\rm C}$ech cohomology to the set of double cosets constructed by G.~Harder \cite{Harder} for  group schemes over Dedekind rings exists in fact in the general situation where it is always injective but,  unlike in the Dedekind case, may not be surjective. In \S6 we describe this map explicitly for the group $\mathrm{GL}_n$ in terms of projective and reflexive modules and show that in some situations our Condition (T) for this group can be derived from Bass's finiteness conjecture in $K$-theory
(cf. \cite{Bass-Conj}). Finally, in \S7 we develop the descent procedure that, under certain additional assumptions, enables one to derive Condition (T) for $\mathrm{GL}_{\ell , D}$, where $D$ is a finite-dimensional central  division algebra, from its truth for $\mathrm{GL}_n$.

\section{Proof of Theorem \ref{T:Finite1}}\label{S:F1}

\noindent {\bf 1. Picard and unit groups.} Let $K$ be a field, and let $V$ be a set of discrete valuations of $K$ that satisfies the following condition

\vskip2mm

(A) For any $a \in K^{\times}$, the set $V(a) := \{ v \in V \, \vert \, v(a) \neq 0 \}$ is finite.

\vskip2mm

\noindent We let $\Div(K , V)$ denote the free abelian group on the set $V$, the elements of which will be called ``{\it divisors.}'' The fact that $V$ satisfies (A) enables one to associate to any $a \in K^{\times}$ the corresponding ``{\it principal divisor}''
$$
(a) = \sum_{v \in V} v(a) \cdot v.
$$
Let $\mathrm{P}(K , V)$ denote the subgroup formed by all principal divisors. We will call the quotient $$\Div(K , V)/\mathrm{P}(K , V)$$ the {\it Picard group} of $V$ and denote it by $\Pic(K , V)$. We note that if $\Pic(K , V)$ is finitely generated, then there exists a finite subset $S \subset V$ such that $\Pic(K , V \setminus S) = 0$. Indeed, let  $d_1, \ldots , d_r \in \Div(K , V)$ be divisors whose images in $\Pic(K , V)$ generate this group. We can find a finite subset $S \subset V$ such that each $d_i$ lies in the subgroup $\langle S \rangle \subset \Div(K , V)$ generated by $S$.
Then
$$
\Div(K , V) = \langle S \rangle + \mathrm{P}(K , V).
$$
Let $\pi_S \colon \Div(K , V) \to \Div(K , V \setminus S)$ be the canonical projection defined by $\pi_S(v) = v $ for $v \in V \setminus S$ and $\pi(v) = 0$ for $v \in S$. Applying $\pi_S$ to the above equality and observing that $\pi_S(\mathrm{P}(K , V)) = \mathrm{P}(K , V \setminus S)$, we obtain that $\Div(K , V \setminus S) = \mathrm{P}(K , V \setminus S)$, i.e. $\Pic(K , V \setminus S) = 0$, as required.

We will also need the corresponding ``{\it group of units}"
$$
\mathrm{U}(K , V) = \{ a \in K^{\times} \ \vert \ v(a) = 0 \ \ \text{for all} \ \ v \in V \}.
$$
We note that for any $v \in V$, we have the following exact sequence
$$
\mathrm{U}(K , V) \longrightarrow \mathrm{U}(K , V \setminus \{ v \}) \stackrel{v}{\longrightarrow} \Z.
$$
It follows that if $\mathrm{U}(K , V)$ is finitely generated, then $\mathrm{U}(K , V \setminus \{ v \})$ is also finitely generated. Iterating, we obtain that if $\mathrm{U}(K , V)$ is finitely generated, then for any finite subset $S \subset V$, the group $\mathrm{U}(K , V \setminus S)$ is finitely generated. Thus, if both groups $\Pic(K , V)$ and $\mathrm{U}(K , V)$ are finitely generated, we can find a finite subset $S \subset V$ such that $\Pic(K , V \setminus S) = 0$ and then $\mathrm{U}(K , V \setminus S)$ will still be finitely generated.

\vskip.5mm

Suppose now that $K$ is a field that is finitely generated over its prime subfield.
We can then find an integrally closed finitely generated $\Z$-subalgebra $R \subset K$ such that $K$ is the  field of fractions of $R$. In this case, the scheme $X = \mathrm{Spec}\: R$ is regular in codimension 1 (cf. \cite[Ch. II, \S 6]{Hart}), and consequently  to every closed irreducible subscheme $Y \subset X$ of codimension 1 (equivalently, to any height one prime ideal $\mathfrak{p} \subset R$) there corresponds a discrete valuation $v_Y$ (or $v_{\mathfrak{p}}$) of $K$, and we call the set
$$
V^K = \{ v_Y \ \vert \ Y \subset X \ \ \text{closed, irreducible}, \ \ \mathrm{codim}_X\: Y = 1 \}
$$
a {\it divisorial} set of places of $K$. We note that the divisorial sets of places associated with different choices of $R$ differ in finitely many places. We have
$$
R = \bigcap_{v \in V^K} \mathscr{O}_{K , v},
$$
where $\mathscr{O}_{K , v}$ is the valuation ring of $v$ in $K$ (cf. \cite[Ch. II, Proposition 6.3A]{Hart}). It follows that the corresponding unit group $\mathrm{U}(K , V^K)$ coincides with $R^{\times}$, hence is finitely generated (see \cite{Samuel}). Furthermore, the group $\Pic(K , V^K)$ is identified with the class group $\mathrm{Cl}(R)$, which is also finitely generated (see \cite{Kahn}).

Moreover, let $L/K$ be a finite separable extension. Then the integral closure $R'$ of $R$ in $L$ is an $R$-module of finite type, hence an integrally closed finitely generated $\Z$-algebra with field of fractions $L$. The corresponding divisorial set of valuations $V^L$ coincides with the set of all extensions of places in $V^K$ to $L$. It follows from the above discussion that the groups $\Pic(L , V^L)$ and $\mathrm{U}(L , V^L)$ are finitely generated.

\vskip2mm

\noindent {\bf 2. Reduction to division algebras of a prime power degree.}  We begin with the following.

\begin{lemma}\label{L:primary}
Let $D$ be a central division algebra of degree $n$ over a field $K$. Assume that $n = n_1n_2$, with $n_1$ and $n_2$ relatively prime, and  $D = D_1 \otimes_K D_2$, where $D_i$ is a central division $K$-algebra of degree $n_i$ for $i = 1, 2$.   Then any central division $K$-algebra $D'$ of degree $n$
that has the same maximal subfields as $D$ is of the form $D' = D'_1 \otimes_K D'_2$ where $[D'_i] \in \gen(D_i)$ for $i = 1, 2$.
\end{lemma}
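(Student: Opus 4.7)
The plan is to construct $D'_1$ and $D'_2$ from the decomposition of $[D']$ in the Brauer group according to the coprime factorization $n = n_1 n_2$, and then to verify the genus condition on each factor using a corestriction-based descent. Since $\gcd(n_1, n_2) = 1$, the $n$-torsion subgroup splits as ${}_n\Br(K) = {}_{n_1}\Br(K) \oplus {}_{n_2}\Br(K)$, so one can write $[D'] = [D'_1] + [D'_2]$, where $D'_i$ is the division algebra representing the component of $[D']$ in ${}_{n_i}\Br(K)$; its period, and therefore the set of prime factors of its index $\deg D'_i$, divides $n_i$. In particular $\deg D'_1$ and $\deg D'_2$ are coprime, so $D'_1 \otimes_K D'_2$ is again a division algebra; sharing the Brauer class of $D'$, it is in fact isomorphic to $D'$, and comparing degrees forces $\deg D'_i = n_i$.

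To prove $[D'_i] \in \gen(D_i)$, by the symmetric roles of $D$ and $D'$ it suffices to show that any degree-$n_1$ extension $P_1/K$ embedding into $D_1$ also embeds into $D'_1$; using the standard equivalence ``$P \hookrightarrow A$ iff $P$ splits $A$'' for a degree-$n$ extension $P$ and a degree-$n$ central division algebra $A$, this amounts to showing $[D'_1]_{P_1} = 0$ in $\Br(P_1)$. The idea is to pair $P_1$ with a separable maximal subfield $P_2$ of $D_2$, which exists by the classical theorem on separable maximal subfields of central simple algebras. Separability of $P_2$ and the coprimality $\gcd(n_1, n_2) = 1$ guarantee that $P := P_1 \otimes_K P_2$ is a field of degree $n$ over $K$ (by an irreducibility argument for the minimal polynomial of a primitive element of $P_2$ over $P_1$). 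Since $P$ contains both $P_1$ and $P_2$, it splits $D = D_1 \otimes_K D_2$, and being of degree $n = \deg D$ it embeds into $D$; the genus hypothesis then transfers this embedding to $D'$, so $P$ splits $D'$, and since $[D'_1]$ and $[D'_2]$ have coprime orders, $P$ splits each $D'_i$ separately.

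The main obstacle, and where the coprimality of $n_1$ and $n_2$ is exploited most crucially, is the descent of this vanishing from $\Br(P)$ back down to $\Br(P_1)$. Here the identity $\mathrm{cor}_{P/P_1} \circ \mathrm{res}_{P/P_1} = [P : P_1] \cdot \id = n_2 \cdot \id$ on $\Br(P_1)$ yields $n_2 \cdot [D'_1]_{P_1} = 0$, while the period of $[D'_1]_{P_1}$ divides that of $[D'_1]$, hence divides $n_1$; coprimality of $n_1$ and $n_2$ therefore forces $[D'_1]_{P_1} = 0$, giving $P_1 \hookrightarrow D'_1$ as required. A secondary technical point to verify is that $P_1 \otimes_K P_2$ is actually a field when $\mathrm{char}\: K$ divides $n$, but the separable choice of $P_2$ handles this uniformly, so no further subtlety is expected.
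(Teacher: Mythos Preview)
Your proof is correct and follows essentially the same strategy as the paper's: decompose $D'$ via the primary decomposition in $\Br(K)$, tensor a maximal subfield of $D_1$ with one of $D_2$ to produce a maximal subfield of $D$, transfer it to $D'$ via the genus hypothesis, and then descend using coprimality of $n_1$ and $n_2$. The only difference is cosmetic: the paper takes both $F_1$ and $F_2$ arbitrary and observes that $F_1 \otimes_K F_2$ is automatically a field because it sits inside the division algebra $D = D_1 \otimes_K D_2$, avoiding your separability hypothesis on $P_2$ and the irreducibility argument; the descent step (your corestriction identity, their implicit index-divides-degree-of-splitting-field argument) is the same in substance.
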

\begin{proof}
We can write $D' = D'_1 \otimes_K D'_2$ where $D'_i$ has degree $n_i$ for $i = 1, 2$ (cf. \cite[Part I, Theorem 4.19]{DeFa}). To show that $[D'_i] \in \gen(D_i)$, take arbitrary maximal subfields $F_i$ of $D_i$ for $i = 1, 2$. Then $F := F_1 \otimes_K F_2$ is a maximal subfield of $D$, hence admits an embedding $F \hookrightarrow D'$. So,
$$
D' \otimes_K F = \mathrm{M}_n(F) = (D'_1 \otimes_K F) \otimes_F (D'_2 \otimes_K F).
$$
Since $n_i [D'_i \otimes_K F] = 0$ in $\Br(F)$, and $n_1$ and $n_2$ are relatively prime, we conclude that $[D'_i \otimes_K F] = 0$ for both $i = 1, 2$. Furthermore,
$$
D'_i \otimes_K F = (D'_i \otimes_K F_i) \otimes_{F_i} F,
$$
and since $[F : F_i] = n_{3-i}$ is relatively prime to $n_i$, we conclude that the class $[D'_i \otimes_K F_i]$ is trivial in $\Br(F_i)$, and consequently there are $K$-embedding $F_i \hookrightarrow D'_i$ for $i = 1, 2$. Conversely, starting with arbitrary maximal subfields $F'_i$ of $D'_i$ $(i = 1, 2)$ and repeating the same argument, one shows that there are $K$-embeddings $F'_i \hookrightarrow D_i$. So, our claim follows.
\end{proof}

\noindent {\bf Remark 2.2.} The referee asked an interesting (and apparently difficult) question of whether the converse is also true, viz. for any $D'_i \in \gen(D_i)$ in the above notations, the division algebra $D' = D'_1 \otimes_K D'_2$ lies in $\gen(D)$.

\addtocounter{thm}{1}

\vskip1mm

We can now show that it suffices to prove Theorem \ref{T:Finite1} for division algebras of a prime power degree.
\begin{cor}\label{C:primary}
Let $K$ be a field. If $\gen(D)$ is finite for any central division $K$-algebra whose degree is a prime power, then $\gen(D)$ is finite for any central
division $K$-algebra $D$.
\end{cor}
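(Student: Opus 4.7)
The plan is to reduce an arbitrary central division algebra to its primary components via iterated application of Lemma \ref{L:primary}, and then bound $\gen(D)$ by a finite product of genera of prime-power-degree algebras.

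First, I would write the degree $n$ as $n = p_1^{a_1} \cdots p_s^{a_s}$, its prime factorization. Using the standard primary decomposition of central simple algebras (cf.\ \cite[Part I, Theorem 4.19]{DeFa}), we obtain a decomposition
$$
D \;=\; D_1 \otimes_K \cdots \otimes_K D_s,
$$
where $D_i$ is a central division $K$-algebra of prime power degree $p_i^{a_i}$. Since these degrees are pairwise coprime, Lemma \ref{L:primary} applies iteratively: given any $[D'] \in \gen(D)$, grouping $n_1 = p_1^{a_1}$ and $n_2 = p_2^{a_2}\cdots p_s^{a_s}$ first produces a decomposition $D' = D'_1 \otimes_K \tilde{D}'$ with $[D'_1] \in \gen(D_1)$ and $[\tilde D'] \in \gen(D_2 \otimes_K \cdots \otimes_K D_s)$; continuing inductively yields
$$
D' \;=\; D'_1 \otimes_K \cdots \otimes_K D'_s, \qquad [D'_i] \in \gen(D_i) \ \ \text{for all} \ i = 1, \ldots, s.
$$

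Next, I would observe that the assignment
$$
\Phi \colon \gen(D) \longrightarrow \gen(D_1) \times \cdots \times \gen(D_s), \qquad [D'] \longmapsto ([D'_1], \ldots, [D'_s]),
$$
is well-defined and injective. Well-definedness follows from the fact that the primary components $D'_i$ of $D'$ are unique up to isomorphism (they are determined by the $p_i$-primary part of $[D']$ in $\Br(K)$). Injectivity is immediate from $[D'] = [D'_1] + \cdots + [D'_s]$ in $\Br(K)$, so that the class $[D']$ is recovered from its image.

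By hypothesis, each set $\gen(D_i)$ is finite since $\deg D_i = p_i^{a_i}$ is a prime power. Hence the target of $\Phi$ is finite, and the injectivity of $\Phi$ forces $\gen(D)$ to be finite as well. I do not anticipate any real obstacle in this argument: the only point requiring a modicum of care is verifying that the primary decomposition of a division algebra is unique up to isomorphism of the factors, which is classical and allows one to define the map $\Phi$ unambiguously.
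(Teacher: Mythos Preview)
Your argument is correct and follows essentially the same approach as the paper: decompose $D$ into its primary components, apply Lemma~\ref{L:primary} (iteratively) to see that any $D'$ in the genus decomposes with $[D'_i]\in\gen(D_i)$, and conclude finiteness from the hypothesis. The paper is slightly terser and does not spell out the map $\Phi$ or its injectivity, but the content is the same.
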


Indeed, let $D$ be a central division $K$-algebra of degree $n = p_1^{\alpha_1} \cdots p_r^{\alpha_r}$. Then we can write
$$
D = D_1 \otimes_K \cdots \otimes_K D_r
$$
where $D_i$ is a central division $K$-algebra of degree $p_i^{\alpha_i}$. It follows from Lemma \ref{L:primary} that any central division $K$-algebra $D'$ of degree $n$ that has the same maximal subfields as $D$ is of the form
$$
D' = D'_1 \otimes_K \cdots \otimes_K D'_r \ \ \text{where} \ \ [D'_i] \in \gen(D_i),
$$
and our assertion follows.

\vskip2mm

\noindent {\bf 3. Unramified relative Brauer groups for a cyclic extension.} The subsequent steps in the proof of Theorem \ref{T:Finite1} involve the analysis of ramification, so we briefly recall the necessary facts. Let $\mathscr{K}$ be a field complete with respect to a discrete valuation $v$. Given a (finite-dimensional) central division $\mathscr{K}$-algebra $\mathscr{D}$, the valuation $v$ extends uniquely to a valuation $w$ of $\mathscr{D}$. Then $\mathscr{D}$ is {\it unramified} if the ramification index $e(w \vert v) = [w(\mathscr{D}^{\times}) : v(\mathscr{K}^{\times})]$ is $1$. An unramified algebra necessarily splits over the maximal unramified extension $\mathscr{K}_{\mathrm{ur}}$. Conversely, if $\mathscr{D}$ splits over $\mathscr{K}_{\mathrm{ur}}$ then the property of $\mathscr{D}$ being unramified can be characterized by several equivalent conditions: (1) the center of the residue algebra $D^{(w)}$ coincides with the residue field $\mathscr{K}^{(v)}$; (2) there exists an Azumaya algebra $\mathscr{A}$ over the valuation ring $\mathscr{O} \subset \mathscr{K}$ such that $\mathscr{D} = \mathscr{A} \otimes_{\mathscr{O}} \mathscr{K}$; (3) if $r \colon H^2(\mathscr{K}_{\mathrm{ur}}/\mathscr{K} , \mathscr{K}_{\mathrm{ur}}^{\times}) \to H^1(\mathscr{K}^{(v)} , \Q/\Z)$ is the residue map, then $r([\mathscr{D}])$ is trivial (see \cite{Wads}). More generally, a central simple $\mathscr{K}$-algebra $\mathscr{C}$ is unramified if $\mathscr{C} = \mathrm{M}_{\ell}(\mathscr{D})$ where $\mathscr{D}$ is an unramified division algebra. We will need the (well-known)  explicit computation of the residue map $r$ on cyclic algebras.

Let $L/K$ be a degree $n$ cyclic extension of arbitrary fields, and let $\sigma$ be a generator of the Galois group $\Ga(L/K)$. Given $c \in K^{\times}$, we let   $(L, \sigma, c)$ denote the cyclic algebra
$$
C = L \oplus Lx \oplus \cdots \oplus Lx^{n-1}
$$
with the relations $xax^{-1} = \sigma(a)$ for $a \in L$ and $x^n = c$.
\begin{lemma}\label{L:cycl1}
Let $\mathscr{K}$ be a field complete with respect to a discrete valuation $v$, let $\mathscr{L}/\mathscr{K}$ be an unramified cyclic extension of degree $m$,
and let $\tau$ be a generator of $\Ga(\mathscr{L}/\mathscr{K})$. Then for the residue map $r \colon H^2(\mathscr{K}_{\mathrm{ur}}/\mathscr{K} , \mathscr{K}_{\mathrm{ur}}^{\times}) \to H^1(\mathscr{K}^{(v)} , \Q/\Z)$ and $c \in \mathscr{K}^{\times}$, the image $r([(\mathscr{L}, \tau, c)])$ corresponds to the character $\chi \colon \Ga(\mathscr{L}^{(w)}/\mathscr{K}^{(v)}) \to \Q/\Z$ such that $\chi(\bar{\tau}) = v(c)/m$, where $\bar{\tau}$ is the image of $\tau$ under the natural identification $\Ga(\mathscr{L}/\mathscr{K}) \simeq \Ga(\mathscr{L}^{(w)}/\mathscr{K}^{(v)})$ and $w$ is the extension of $v$.
\end{lemma}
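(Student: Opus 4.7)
The plan is to reduce the claim to a direct computation with an explicit 2-cocycle and then identify it with the character $\chi$ via the standard isomorphism $H^2(\Ga(\mathscr{L}/\mathscr{K}), \Z) \cong H^1(\Ga(\mathscr{L}/\mathscr{K}), \Q/\Z)$. Since $\mathscr{L}/\mathscr{K}$ is unramified, $\mathscr{L}$ lies in $\mathscr{K}_{\mathrm{ur}}$, so by inflation the class $[(\mathscr{L}, \tau, c)] \in H^2(\Ga(\mathscr{L}/\mathscr{K}), \mathscr{L}^\times)$ gives a well-defined element in $H^2(\mathscr{K}_{\mathrm{ur}}/\mathscr{K}, \mathscr{K}_{\mathrm{ur}}^\times)$, and the identification $\Ga(\mathscr{L}/\mathscr{K}) \simeq \Ga(\mathscr{L}^{(w)}/\mathscr{K}^{(v)})$ is compatible with the construction of the residue map. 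The residue map itself comes from the split exact sequence of $\Ga(\mathscr{K}_{\mathrm{ur}}/\mathscr{K})$-modules
\[
1 \longrightarrow \cO_{\mathscr{K}_{\mathrm{ur}}}^{\times} \longrightarrow \mathscr{K}_{\mathrm{ur}}^{\times} \stackrel{w}{\longrightarrow} \Z \longrightarrow 0,
\]
followed by the connecting isomorphism coming from $0 \to \Z \to \Q \to \Q/\Z \to 0$; the first projection sends $[(\mathscr{L},\tau,c)]$ to the class of the $\Z$-valued 2-cocycle obtained by applying $w$ to a cocycle representative.

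Next, I would write down the standard 2-cocycle for the cyclic algebra. Using the presentation $C = \bigoplus_{i=0}^{m-1} \mathscr{L} x^i$ with $x^m = c$, one gets the cocycle $f \colon \Ga(\mathscr{L}/\mathscr{K})^2 \to \mathscr{L}^\times$ defined, for $0 \le i,j \le m-1$, by $f(\tau^i, \tau^j) = 1$ if $i+j < m$ and $f(\tau^i, \tau^j) = c$ if $i+j \ge m$. Since $\mathscr{L}/\mathscr{K}$ is unramified, $w(c) = v(c)$, and applying $w$ yields the $\Z$-valued cocycle
\[
g(\tau^i, \tau^j) \,=\, \begin{cases} 0, & i+j < m,\\ v(c), & i+j \ge m.\end{cases}
\]
Thus, after unwinding the definitions, $r([(\mathscr{L},\tau,c)])$ is the class of $g$ in $H^2(\Ga(\mathscr{L}/\mathscr{K}), \Z)$, transported to $H^1(\Ga(\mathscr{L}/\mathscr{K}), \Q/\Z)$ via the connecting homomorphism.

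Finally, I would produce the explicit preimage under the connecting map. Define $\tilde{\chi} \colon \Ga(\mathscr{L}/\mathscr{K}) \to \Q$ by $\tilde{\chi}(\tau^i) = iv(c)/m$ for $0 \le i \le m-1$. A direct calculation gives
\[
\tilde{\chi}(\tau^i) + \tilde{\chi}(\tau^j) - \tilde{\chi}(\tau^{i+j}) = \begin{cases} 0, & i+j < m,\\ v(c), & i+j \ge m,\end{cases}
\]
i.e. the coboundary of $\tilde{\chi}$ (viewed as a $\Q$-valued 1-cochain) equals $g$. Hence $g$ corresponds under $H^2(\Ga,\Z) \cong H^1(\Ga,\Q/\Z)$ to the reduction $\chi := \tilde{\chi} \mod \Z$, which satisfies $\chi(\bar{\tau}) = v(c)/m$, proving the lemma.

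The main thing to be careful about is the bookkeeping: matching the normalization of the residue map (the splitting of $\mathscr{K}_{\mathrm{ur}}^\times$ depends on the chosen uniformizer only up to a coboundary, so the class is well-defined), and verifying that the sign conventions in the connecting homomorphism $H^1(\Ga,\Q/\Z) \stackrel{\sim}{\to} H^2(\Ga,\Z)$ agree with those used to define $r$. Once the conventions are aligned, the computation above is essentially forced, and nothing deeper is needed.
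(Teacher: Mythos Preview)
Your proof is correct and follows essentially the same approach as the paper: write down the standard $2$-cocycle for the cyclic algebra, push it forward along the valuation to $H^2(\Ga(\mathscr{L}/\mathscr{K}),\Z)$, and identify the result with the character $\chi(\tau)=v(c)/m$ via the connecting isomorphism coming from $0\to\Z\to\Q\to\Q/\Z\to 0$. Your version is slightly more explicit in exhibiting the $\Q$-valued lift $\tilde{\chi}$, but otherwise the arguments coincide.
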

\begin{proof}
Recall (cf. \cite[Ch. XIII, \S3]{Serre-LF}) that $r$ is the following composition
$$
H^2(\mathscr{K}_{\mathrm{ur}}/\mathscr{K} , \mathscr{K}_{\mathrm{ur}}^{\times}) \longrightarrow H^2(\mathscr{K}_{\mathrm{ur}}/\mathscr{K} , \Z) \longrightarrow
H^1(\mathscr{K}_{\mathrm{ur}}/\mathscr{K} , \Q/\Z) \simeq H^1(\mathscr{K}^{(v)} , \Q/\Z),
$$
where the first map is induced by the valuation $\mathscr{K}_{\mathrm{ur}}^{\times} \to \Z$ extending $v$ and the second one is the inverse of the connecting map corresponding to the exact sequence
$$
0 \to \Z \longrightarrow \Q \longrightarrow \Q/\Z \to 0
$$
of trivial Galois modules. The cocycle in $H^2(\mathscr{L}/\mathscr{K} , \mathscr{L}^{\times})$ that corresponds to $[(\mathscr{L}, \tau, c)]$ is given by
$$
h(\tau^i , \tau^j) = \left\{  \begin{array}{ccl} 1 & , & i+j < m, \\ c & , & \text{otherwise}, \end{array} \right. \ \ \text{for} \ \ 0 \leqslant i , j < m.
$$
The image of this cocycle in $H^2(\mathscr{L}/\mathscr{K} , \Z)$ is given by a similar formula where $c$ is replaced by $v(c)$. Using now the description of the connecting map, we obtain that it takes this cocycle to the character in $H^1(\mathscr{L}/\mathscr{K} , \Q/\Z)$ given by
$$
\chi(\tau) = \frac{v(c)}{m},
$$
and our claim follows using inflation maps.
\end{proof}

Now, a central simple algebra $C$ over an arbitrary field $K$ is said to be unramified at a discrete valuation $v$ of $K$ if the corresponding algebra $C_v := C \otimes_K K_v$ over the completion $K_v$ is unramified as defined above. Given a set $V$ of discrete valuations of $K$, we let $\Br(K)_V$ denote the subgroup of $\Br(K)$ consisting of classes that are unramified at all $v \in V$. Finally, for an extension $L/K$, we let $\Br(L/K)$ denote the corresponding relative Brauer group, i.e.
$$
\Br(L/K) = \mathrm{Ker}\left(\Br(K) \to \Br(L)\right),
$$
and set $\Br(L/K)_V = \Br(L/K) \cap \Br(K)_V$.

\begin{prop}\label{P:cycl}
Let $K$ be a field equipped with a set $V^K$ of discrete valuations that satisfies condition {\rm (A)}, let $L/K$ be a cyclic extension of degree $n$, and let $V^L$ be the set of all extensions of places from $V^K$ to $L$.\footnotemark Assume that the groups $\Pic(L , V^L)$ and $\mathrm{U}(K , V^K)$ are finitely generated. Then the unramified relative Brauer group $\Br(L / K)_{V^K}$ is finite.
\end{prop}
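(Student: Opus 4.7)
The approach exploits the classical identification $\Br(L/K) \cong K^\times/N_{L/K}(L^\times)$ for cyclic $L/K$, under which the class of the cyclic algebra $(L, \sigma, c)$ corresponds to $c \cdot N_{L/K}(L^\times)$. Letting $U \subseteq K^\times$ denote the preimage of $\Br(L/K)_{V^K}$ under this map, so that $\Br(L/K)_{V^K} \cong U/N_{L/K}(L^\times)$, the goal becomes showing this quotient is finite by controlling the divisor support of its representatives.

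The first technical step extends Lemma \ref{L:cycl1} to arbitrary $v \in V^K$ (possibly ramified in $L$): if $[(L, \sigma, c)]$ is unramified at $v$, then $f_v \mid v(c)$, where $f_v$ denotes the residue degree of any extension of $v$ to $L$. This is obtained by decomposing the character associated to $L_w/K_v$ into its maximal unramified component $\chi^{\mathrm{ur}}$ (of order $f_v$) and observing that the residue of $[(L_w, \tau, c)]$ projects onto $v(c)\cdot\chi^{\mathrm{ur}}$. The same lemma also yields $\mathrm{U}(K, V^K) \subseteq U$. Combining this with the formula $v(N_{L/K}(a)) = f_v \sum_{w\mid v} w(a)$, I deduce $\mathrm{div}_K(U) \subseteq \bigoplus_v f_v\Z \cdot v = N(\Div(L, V^L))$, while $\mathrm{div}_K(N_{L/K}(L^\times)) = N(\mathrm{P}(L, V^L))$.

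Passing to the quotient by $N_{L/K}(L^\times)$, the divisor map then produces the exact sequence
\begin{equation*}
1 \to \mathrm{U}(K, V^K)/\bigl(\mathrm{U}(K, V^K) \cap N_{L/K}(L^\times)\bigr) \to U/N_{L/K}(L^\times) \to \mathrm{div}_K(U)/\mathrm{div}_K(N_{L/K}(L^\times)) \to 0.
\end{equation*}
The left-hand term is a quotient of the finitely generated group $\mathrm{U}(K, V^K)$, while the right-hand term embeds into $N(\Div(L, V^L))/N(\mathrm{P}(L, V^L))$, which is a quotient of $\Pic(L, V^L)$ and therefore finitely generated by hypothesis. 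Hence $\Br(L/K)_{V^K} \cong U/N_{L/K}(L^\times)$ is finitely generated; since it is contained in the $n$-torsion group $\Br(L/K)$, it has finite exponent and is therefore finite.

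The main obstacle is the first step, namely the extension of Lemma \ref{L:cycl1} to ramified $v$. It requires a careful residue computation for cup products $\chi \cup c \in H^2(K_v, \mathbb{G}_m)$ when the character $\chi$ itself is ramified, separating the contribution of $v(c)$ along the unramified part of $\chi$ from the possibly non-trivial contribution of the unit part of $c$; alternatively, one could first shrink $V^K$ by a finite set on which $L/K$ ramifies (if this set is known to be finite) and then invoke Lemma \ref{L:cycl1} directly.
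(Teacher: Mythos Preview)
Your argument via the ``alternative'' route—deleting the finitely many $v\in V^K$ that ramify in $L$ and then invoking Lemma~\ref{L:cycl1}—is correct and is precisely what the paper does. (Your parenthetical doubt is unwarranted: condition~(A) applied to the coefficients and the discriminant of a minimal polynomial for a primitive element of $L/K$ shows that the set of ramified $v$ is finite.) After this reduction both proofs rest on the same fact, namely that $(L,\sigma,c)$ is unramified at $v$ iff $n_v\mid v(c)$. The paper then shrinks $V^K$ further so that $\Pic(L,V^L)=0$ and uses this to produce, for each $v$, an explicit norm $N_{L/K}(\pi_{w(v)})$ with $v$-value $n_v$ and trivial value elsewhere; multiplying these together absorbs the divisor of $c$ into a norm, leaving a representative in $\mathrm{U}(K,V^K)$. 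Your exact-sequence formulation avoids this second shrinking by keeping the quotient of $\Pic(L,V^L)$ as a finitely generated obstruction on the right; this is a cleaner homological packaging of the same idea, while the paper's version is more explicit and yields the sharper statement that every unramified class is represented by a unit.

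Your \emph{primary} approach, however—handling ramified $v$ directly—has a genuine gap. The one-sided implication ``$[(L,\sigma,c)]$ unramified $\Rightarrow f_v\mid v(c)$'' does not give $\mathrm{U}(K,V^K)\subseteq U$; that would require the converse. And at a place where $L_w/K_v$ is ramified, the converse generally fails: writing the residue of $\chi\cup c$ as $v(c)\chi^{\mathrm{ur}}$ plus a contribution from the unit part of $c$ paired with the ramified part of $\chi$, the second term need not vanish for a generic unit $c$. So at ramified primes a $v$-unit $c$ can give a ramified algebra, and the left end of your exact sequence is not identified correctly. This is why the shrinking step is not merely a convenience but is essential to the argument.
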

\footnotetext{We note that if $V^K$ satisfies condition (A), then so does the set $V^F$ of all extensions of places from $V^K$ to any finite extension $F/K$.}
\begin{proof}
Suppose $L = K(a)$, and let $f(t) \in K[t]$ be the minimal polynomial of $a$ over $K$. For all but finitely many $v \in V^K$, we have $f \in \mathscr{O}_{K , v}[t]$ and the reduction of $f$ modulo the valuation ideal does not have multiple roots; then $v$ is unramified in $L$. So, deleting from $V^K$ a finite set of valuations, we may assume that all $v \in V^K$ are unramified in $L$ (note that this does not affect finite generation of $\Pic(L , V^L)$ and $\mathrm{U}(K , V^K)$). Furthermore, deleting from $V^K$ another finite set of valuations, we may assume in addition that $\Pic(L , V^L) = 0$, while $U(K , V^K)$ is still finitely generated.

Now, fix a generator $\sigma$ of $\Ga(L/K)$. Then any element of $\Br(L/K)$ is represented by some $C = (L, \sigma, c)$, $c \in K^{\times}$. For $v \in V^K$, pick an extension $w$ to $L$ and set $n_v = [L_w : K_v]$. It is easy to see that $[C_v] \in \Br(K_v)$ is represented by $(L_w, \sigma^{d_v}, c)$ where $d_v = n/n_v$, so it follows from Lemma \ref{L:cycl1} that $C$ is unramified at $v$ if and only if
$$
v(c) \equiv 0(\mathrm{mod}\: n_v).
$$
To continue the argument, we need the following.
\begin{lemma}\label{L:cycl2}
As above, let $L/K$ be a cyclic extension of degree $n$, and let $v$ be a discrete valuation of $K$ such that $L_w/K_v$ $(\text{for} \ w \vert v)$ is unramified of degree $n_v$. If $\pi_w \in L^{\times}$ is such that $w(\pi_w) = 1$ and $w'(\pi_w) = 0$ for all $w' \in V^L \setminus \{ w \}$, then
$$
v(N_{L/K}(\pi_w)) = n_v \ \ \text{and} \ \ v'(N_{L/K}(\pi_w)) = 0 \ \ \text{for all} \ \ v' \in V^K \setminus \{ v \}.
$$
\end{lemma}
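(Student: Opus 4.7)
\medskip

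\noindent \emph{Proof plan for Lemma \ref{L:cycl2}.} The strategy is to reduce the computation to the standard formula expressing the $v'$-valuation of a norm as a $\mathbb{Z}$-linear combination of the $w'$-valuations of the element for $w' \mid v'$. Concretely, for any $y \in L^{\times}$ and any discrete valuation $v'$ of $K$, if $w'_1, \dots, w'_s$ are all the extensions of $v'$ to $L$, normalized so that $w'_i(L^{\times}) = \mathbb{Z}$, then
$$
v'(N_{L/K}(y)) \;=\; \sum_{i=1}^{s} f(w'_i \vert v')\cdot w'_i(y),
$$
which comes from decomposing $N_{L/K}$ as the product of the local norms $N_{L_{w'_i}/K_{v'}}$ and using that $v'(N_{L_{w'_i}/K_{v'}}(y)) = f(w'_i\vert v')\cdot w'_i(y)$ (checked e.g. on a uniformizer). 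I would simply quote this identity at the start of the argument.

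For the first claim, I would apply the formula at $v$. By hypothesis $L_w/K_v$ is unramified of degree $n_v$, so $e(w\vert v)=1$ and $f(w\vert v)=n_v$. Since $L/K$ is Galois, all extensions $w = w_1, w_2, \dots, w_s$ of $v$ to $L$ share the same residue degree $n_v$. By the defining property of $\pi_w$, we have $w_i(\pi_w)=0$ for $i\geqslant 2$ (each such $w_i$ lies in $V^L\setminus\{w\}$), while $w(\pi_w)=1$. Only the $w$-term survives, giving $v(N_{L/K}(\pi_w)) = n_v \cdot 1 = n_v$.

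For the second claim, I would take any $v'\in V^K\setminus\{v\}$ and list its extensions $w'_1, \dots, w'_{s'}$ to $L$. Each $w'_i$ lies in $V^L$ (as $V^L$ consists of all extensions of places in $V^K$) and is distinct from $w$ since $w$ restricts to $v\neq v'$. Therefore $w'_i(\pi_w) = 0$ for every $i$, and the norm formula yields $v'(N_{L/K}(\pi_w)) = 0$.

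There is no real obstacle here: the proof is essentially a one-line consequence of the norm-valuation formula combined with the two defining properties of $\pi_w$. The only care needed is with the normalization of the valuations $w'$ (so that the factor $f(w'\vert v')$ appears cleanly) and the observation that the Galois orbit of $w$ above $v$ has exactly $n/n_v$ elements, all with residue degree $n_v$, so that the single contribution from $w$ itself gives precisely $n_v$.
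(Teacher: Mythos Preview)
Your proof is correct and follows essentially the same idea as the paper's: both compute $v'(N_{L/K}(\pi_w))$ by summing the contributions from the places of $L$ lying above $v'$, and observe that only the place $w$ contributes (with weight $n_v$) when $v'=v$, while nothing contributes when $v'\neq v$. The only cosmetic difference is that the paper writes out the norm explicitly as the Galois product $\prod_{\theta\in G}\theta(\pi_w)$ and evaluates $w$ on each factor (using $v=w\vert_K$ since $w\vert v$ is unramified), whereas you invoke the packaged norm--valuation identity $v'(N_{L/K}(y))=\sum_i f(w'_i\vert v')\,w'_i(y)$; these amount to the same computation.
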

\begin{proof}
Let $H = \Ga(L_w/K_v) \subset G = \Ga(L/K)$. Then for $\theta \in G$ the valuation $w' := w \circ \theta$ coincides with $w$ if and only if $\theta \in H$. So,
$$
w(\theta(\pi_w)) = \left\{ \begin{array}{ccl} 1 & , & \theta \in H, \\ 0 & , & \theta \in G \setminus H. \end{array}   \right.
$$
Now, taking into account that $w \vert v$ is unramified, we obtain
$$
v(N_{L/K}(\pi_w)) = w\left( \prod_{\theta \in G} \theta(\pi_w)  \right) = \vert H \vert = n_v,
$$
proving the first assertion. The second assertion immediately follows from the fact that for any extension $w' \vert v'$ we have $w'(\pi_w) = 0$.
\end{proof}

To complete the proof of the proposition, we assume that $C = (L, \sigma, c)$ is unramified at all $v \in V$. Then $v(c) \equiv 0(\mathrm{mod}\: n_v)$ for all $v \in V$ in the above notations. Now, for each $v \in V(c)$, we pick an extension $w(v) \vert v$. Since $\Pic(L , V^L) = 0$, there exists an element $\pi_{w(v)} \in L^{\times}$ for which the corresponding principal divisor $(\pi_{w(v)})$ coincides with $w(v)$ (regarded as an element of $\Div(L , V^L)$). This means that that $w(v)(\pi_{w(v)}) = 1$ and $w'(\pi_{w(v)}) = 0$ for all $w' \in V^L \setminus \{ w(v) \}$. Consider
$$
d = \prod_{v \in V(c)} N_{L/K}(\pi_{w(v)})^{v(c)/n_v}.
$$
It follows from Lemma \ref{L:cycl2} that $v(d) = v(c)$ for all $v \in V^K$, so
$$
u := cd^{-1} \in \mathrm{U}(K , V^K).
$$
On the other hand, by construction $d \in N_{L/K}(L^{\times})$, so
$$
(L, \sigma, c) = (L, \sigma, u).
$$
So, any class in $\Br(L/K)_{V^K}$ is represented by an algebra of the form $(L, \sigma, u)$ with $u \in \mathrm{U}(K , V^K)$. Since the group
$\mathrm{U}(K , V^K)$ is finitely generated, the quotient $\mathrm{U}(K , V^K)/\mathrm{U}(K , V^K)^n$ is finite, and our assertion follows.
\end{proof}

\vskip1mm

\noindent {\bf Remark 2.7.} In the proof of Theorem \ref{T:Finite1}, we will use the conclusion of Proposition \ref{P:cycl} only in the case of a cyclic extension $L/K$ of prime degree $p$. However, the proof of the proposition in this particular case requires essentially the same argument.

\addtocounter{thm}{1}

\vskip1mm

\noindent {\bf 4. Unramified relative Brauer group for a separable extension of a prime power degree.} The goal of this
subsection is to prove the following.
\begin{prop}\label{P:primary}
Let $K$ be a field, $L$ be a separable extension of $K$ of degree $p^r$, where $p$ is a prime, and $E$ be the normal closure
of $L$ over $K$. Assume that $K$ is equipped with a set $V^K$ of discrete valuations such that for every intermediate
subfield $K \subset F \subset E$, the groups $\Pic(F , V^F)$ and $\mathrm{U}(F , V^F)$ are finitely generated. Then the
unramified relative Brauer group $\Br(L/K)_{V^K}$ is finite.
\end{prop}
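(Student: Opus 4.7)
The plan is to reduce in two stages: first, a Sylow subgroup argument reduces to the case where $\Ga(E/K)$ is a $p$-group; second, within that case, an induction on $|\Ga(E/K)|$ that peels off cyclic layers of degree $p$ lets us apply Proposition \ref{P:cycl} at each step.

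\emph{Sylow reduction.} Let $G=\Ga(E/K)$, pick a Sylow $p$-subgroup $P\subset G$, and set $K'=E^P$, so that $[K':K]=[G:P]$ is prime to $p$. Every element of $\Br(L/K)$ is killed by $[L:K]=p^r$, hence is $p$-primary; since $\Cor\circ\Res$ equals multiplication by $[K':K]$ and this integer is invertible on $p$-primary torsion, the restriction map $\Br(K)\to\Br(K')$ is injective on $\Br(L/K)$. Its image lies in $\Br(LK'/K')\subset\Br(E/K')$, and restriction preserves the unramified condition at each $v\in V^K$ (over $K'_{v'}$, the restricted class is still split by an unramified extension, namely the compositum of $K'_{v'}$ with the unramified extension of $K_v$ that split the original class). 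Hence $\Br(L/K)_{V^K}$ injects into $\Br(E/K')_{V^{K'}}$, and the hypothesis on intermediate fields between $K$ and $E$ a fortiori gives the analogous hypothesis for intermediate fields between $K'$ and $E$. After renaming, we are reduced to showing $\Br(E/K)_{V^K}$ is finite whenever $E/K$ is Galois with $p$-group Galois group $G$.

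\emph{Induction on $|G|$.} The base case $G=1$ is trivial. For the inductive step, a nontrivial $p$-group has nontrivial center, so choose a central subgroup $Z\subset G$ of order $p$ and set $F=E^Z$; then $F/K$ is Galois with Galois group $G/Z$, so by induction $\Br(F/K)_{V^K}$ is finite. The extension $E/F$ is cyclic of degree $p$, and since $\Pic(E,V^E)$ and $\mathrm{U}(F,V^F)$ are finitely generated by assumption, Proposition \ref{P:cycl} yields that $\Br(E/F)_{V^F}$ is also finite. The inclusion $\Br(F/K)\subset\Br(E/K)$ together with restriction from $K$ to $F$ fit into the exact sequence
$$
0 \longrightarrow \Br(F/K) \longrightarrow \Br(E/K) \longrightarrow \Br(E/F),
$$
and intersecting with the unramified subgroups (again invoking that restriction preserves unramifiedness) produces
$$
0 \longrightarrow \Br(F/K)_{V^K} \longrightarrow \Br(E/K)_{V^K} \longrightarrow \Br(E/F)_{V^F},
$$
which sandwiches $\Br(E/K)_{V^K}$ between two finite groups, completing the induction.

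The main technical point is the bookkeeping around unramifiedness under restriction---a standard fact, but one that must be tracked carefully both for the Sylow reduction $K\to K'$ and for the reduction $K\to F$ inside the induction. Everything else is an exercise in exploiting the nontrivial center of a $p$-group to reduce to the cyclic prime-degree case covered by Proposition \ref{P:cycl}.
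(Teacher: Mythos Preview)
Your proof is correct and rests on the same three ingredients as the paper's: a Sylow/prime-to-$p$ restriction argument, a d\'evissage through cyclic layers of degree $p$ inside a $p$-group, and Proposition~\ref{P:cycl} for each cyclic step. The organization differs, however. You perform the Sylow reduction once at the outset, pass entirely to the Galois $p$-group extension $E/K'$, and then peel off a cyclic layer \emph{at the top} using the nontrivial center. The paper instead inducts on $r=\log_p[L:K]$, repeats the Sylow step at each stage, and peels a cyclic layer \emph{at the bottom}: it finds a normal subgroup $\mathscr{N}\lhd\mathscr{G}_p$ of index $p$ containing $\mathscr{H}=\Ga(E/LF)$, so that $M=E^{\mathscr{N}}$ is a degree-$p$ cyclic subextension of $LF/F$, and applies the induction hypothesis to $LF/M$ of degree $p^{r-1}$. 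Your version is arguably tidier (one Sylow step, one clean induction on $|G|$), while the paper's keeps the non-Galois $L$ in play and tracks the degree $p^r$ more explicitly; both reach the same exact sequence sandwiching the target between two groups already known to be finite.
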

\begin{proof}
For a (finite) field extension $P/F$, we let $\rho_{P/F} \colon \Br(F) \to \Br(P)$ denote the base change map. Using Azumaya algebras or properties of the residue maps (cf. \cite[Theorem 10.4]{Salt}), it is easy to see that if $x \in \Br(F)$ is unramified at a discrete valuation $v$, then $\rho_{P/F}(x)$ is unramified at any extension $w \vert v$ (we note that $x$ splits over the maximal unramified extension of $F_v$).

We will prove the proposition by induction on $r$. First, we consider the case $r = 1$. Let $\mathscr{G} = \Ga(E/K)$. Pick a Sylow $p$-subgroup $\mathscr{G}_p$ of $\mathscr{G}$, and let $F = E^{\mathscr{G}_p}$ denote the corresponding fixed subfield. Since $\mathscr{G}$ is a subgroup of the symmetric group $S_p$, the subgroup $\mathscr{G}_p$ is a cyclic group of order $p$, and therefore $E/F$ is a cyclic extension of degree $p$. We note that $E = LF = L \otimes_K F$, which
implies that $\rho_{F/K}(\Br(L/K)) \subset \Br(E/F)$. Moreover, it follows from the previous remarks that
$$
\rho_{F/K}(\Br(L/K)_{V^K}) \subset \Br(E/F)_{V^F}.
$$
Since $E/F$ is a cyclic extension and the groups $\Pic(E , V^E)$ and $\mathrm{U}(F , V^F)$ are finitely generated by assumption, the group $\Br(E/F)_{V^F}$ is finite by Proposition \ref{P:cycl}. On the other hand, since $[F : K]$ is prime to $p$, the restriction of $\rho_{F/K}$ to $\Br(L/K)$ is injective, and the finiteness of $\Br(L/K)_{V^K}$ follows.

Let now $r > 1$. Again, we pick a Sylow $p$-subgroup $\mathscr{G}_p$ of $\mathscr{G} = \Ga(E/K)$ and set $F = E^{\mathscr{G}_p}$. As above, $\rho_{F/K}$ induces an injection of $\Br(L/K)_{V^K}$ into $\Br(LF/F)_{V^F}$, so it is enough to prove the finiteness of the latter. Let $\mathscr{H}$ be the subgroup of $\mathscr{G}_p = \Ga(E/F)$ corresponding to $LF \subset E$. There exists a normal subgroup $\mathscr{N} \subset \mathscr{G}_p$ of index $p$ that contains $\mathscr{H}$, and then $M := E^{\mathscr{N}}$ is a degree $p$ cyclic extension of $F$ contained in $LF$. We have
$$
\rho_{M/F}(\Br(LF/F)_{V^F}) \subset \Br(LF/M)_{V^M}.
$$
Furthermore, the normal closure of $LF$ over $M$ is contained in $E$, and so $LF/M$ is a separable extension of degree $p^{r-1}$ that satisfies the assumptions of the proposition. By the induction hypothesis, $\Br(LF/M)_{V^M}$ is finite. On the other hand, $\mathrm{Ker}\: \rho_{M/F} \cap \Br(LF/F)_{V^F} = \Br(M/F)_{V^F}$ is finite since $M/F$ is cyclic, and the finiteness of $\Br(LF/F)_{V^F}$ follows.
\end{proof}

\vskip1mm

\noindent {\bf 5. Proof of Theorem \ref{T:Finite1}.} Let $K$ be a finitely generated field. It follows from Corollary \ref{C:primary} that it is enough to prove the finiteness of $\gen(D)$ for $D$ of a prime power degree $p^r$. Let $V^K$ be a divisorial set of places. Deleting a finite subset, we may assume that $D$ is unramified at all $v \in V^K$.
\begin{lemma}\label{L:unram}
Let $D'$ be a central division $K$-algebra that has the same maximal subfields as $D$. Then $D'$ is unramified at all $v \in V^K$.
\end{lemma}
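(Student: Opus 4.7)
My plan is to show, for each fixed $v \in V^K$, that $D'_v := D' \otimes_K K_v$ is unramified by constructing a separable maximal subfield $L$ of $D$ whose $v$-adic completion $L \otimes_K K_v$ is an unramified field extension of $K_v$ of degree $n = \deg D$. Granting such an $L$, the hypothesis that $D$ and $D'$ share the same maximal subfields yields a $K$-embedding $L \hookrightarrow D'$, so $L$ splits $D'$ and hence $L \otimes_K K_v$ splits $D'_v$. Since this splitting field is contained in the maximal unramified extension $K_v^{\mathrm{ur}}$, it follows that $K_v^{\mathrm{ur}}$ splits $D'_v$, i.e., $D'_v$ is unramified. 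As $v$ was arbitrary, this gives the lemma.

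The local half is the easier one. Write $D_v = \mathrm{M}_s(\Delta_v)$ with $\Delta_v$ an unramified central division $K_v$-algebra of degree $t$ and $st = n$; since $\Delta_v$ is unramified, it splits over some unramified extension of $K_v$ of degree $t$. Extending to an unramified extension $\mathscr{L}_v / K_v$ of degree $n$ -- which is possible because the residue field $\kappa_v$, being finitely generated, admits separable extensions of every degree -- we see that $\mathscr{L}_v$ splits $D_v$, and being of degree $n = \deg D_v$ it embeds as a maximal subfield of $D_v$. Pick a generator $\beta \in D_v$ of $\mathscr{L}_v$ over $K_v$.

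To globalize, I use that $K$ is dense in $K_v$, and hence $D$ is dense in $D_v$ as $K_v$-vector spaces; thus $\beta$ can be approximated to arbitrary $v$-adic precision by elements $\alpha \in D$. For a sufficiently fine approximation, the reduced characteristic polynomial $f_\alpha \in K[t]$ of $\alpha$ will be close (coefficient-wise) to the irreducible polynomial $f_\beta \in K_v[t]$, and Krasner's lemma then guarantees that $f_\alpha$ remains irreducible over $K_v$ and defines the same extension $\mathscr{L}_v$. Setting $L := K(\alpha) \subset D$ produces the desired separable maximal subfield of $D$ with $L \otimes_K K_v \simeq \mathscr{L}_v$. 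The main obstacle is precisely this globalization step -- ensuring simultaneously that $[L:K] = n$ and that $L \otimes_K K_v$ is a single field rather than a product of smaller factors -- but a careful application of Krasner's lemma to $f_\beta$ secures both properties at once, and I do not anticipate any further difficulties.
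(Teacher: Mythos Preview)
Your argument contains a genuine gap at the final step: the implication ``$K_v^{\mathrm{ur}}$ splits $D'_v$, i.e., $D'_v$ is unramified'' is not valid. In the paper's terminology, a central simple $K_v$-algebra is \emph{unramified} when the valuation extends with ramification index~$1$ (equivalently, its class comes from an Azumaya algebra over $\mathscr{O}_v$, equivalently its residue under $r$ is trivial). Being split by $K_v^{\mathrm{ur}}$ --- often called \emph{inertially split} --- is a strictly weaker condition. A concrete witness: if $\mathscr{L}_v/K_v$ is an unramified cyclic extension of degree $n$ with generator $\sigma$ and $\pi_v$ is a uniformizer, then the cyclic algebra $(\mathscr{L}_v,\sigma,\pi_v)$ is split by $\mathscr{L}_v\subset K_v^{\mathrm{ur}}$, yet by Lemma~\ref{L:cycl1} its residue is the character $\chi$ with $\chi(\bar\sigma)=1/n\neq 0$, so it is ramified. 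Thus exhibiting a single unramified maximal subfield of $D'_v$ cannot by itself force $D'_v$ to be unramified.

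What your construction does secure is that $D'_v$ is inertially split, and the paper's proof uses exactly this as its opening move (so that the center $\mathscr{E}'$ of the residue algebra $(\Delta')^{(v)}$ is separable over $K^{(v)}$). The substantive work is then to rule out $\mathscr{E}'\supsetneq K^{(v)}$. The paper does this by first passing to the underlying local division algebras $\Delta,\Delta'$ (which share the same maximal separable subfields by results of \cite{RR}) and then --- exploiting that the residue field $K^{(v)}$ is finitely generated --- choosing a maximal separable subfield $\mathscr{P}$ of the central $K^{(v)}$-algebra $\Delta^{(v)}$ that is \emph{linearly disjoint} from $\mathscr{E}'$. Lifting $\mathscr{P}$ to an unramified maximal subfield of $\Delta$, transporting it into $\Delta'$, and reducing forces $\mathscr{E}'\subset\mathscr{P}$, a contradiction. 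The freedom to pick $\mathscr{P}$ avoiding $\mathscr{E}'$ is precisely what your single unramified splitting field cannot supply.
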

\begin{proof}
When $p \neq \mathrm{char}\: K^{(v)}$ this follows from \cite[Lemma 2.5]{CRR1} or \cite[Lemma 2.5]{RR}, so we will now give an argument that applies to division algebras of any degree. Fix $v \in V^K$ and write
$$
D \otimes_K K_v = \mathrm{M}_{\ell}(\Delta) \ \ \text{and} \ \  D' \otimes_K K_v = \mathrm{M}_{\ell'}(\Delta').
$$
for some central division $K_v$-algebras $\Delta$ and $\Delta'$. Since $D$ and $D'$ have the same maximal subfields, it follows from \cite[Lemma 2.1]{RR} that $D \otimes_K K_v$ and $D' \otimes_K K_v$ have the same maximal \'etale $K_v$-subalgebras, and then according to \cite[Lemma 2.3]{RR} we have $\ell = \ell'$ and $\Delta$ and $\Delta'$ have the same maximal separable subfields. By our assumption, $\Delta$ is unramified at $v$, which means that the residue algebra $\Delta^{(v)}$ is a {\it central} division algebra over the residue field $K^{(v)}$, and in particular, $\Delta$ contains a maximal subfield which is (separable and) unramified over $K_v$. Then $\Delta'$ also contains such a subfield (i.e. is ``inertially split''), and hence the center $\mathscr{E}'$ of the residue algebra $(\Delta')^{(v)}$ is a separable extension of $K^{(v)}$. It is well-known (cf. \cite{Wads}) that in this case the degree $[\mathscr{E}' : K^{(v)}]$ equals the ramification index of $\Delta'$, so to prove that $\Delta'$, hence $D'$ is unramified, we need to show that $\mathscr{E}' = K^{(v)}$. Assume the contrary. Since $v$ is divisorial, the residue field $K^{(v)}$ is finitely generated. Now, using the fact that $\Delta^{(v)}$ is central over $K^{(v)}$ and applying \cite[Proposition 2.7]{RR}, we conclude that $\Delta^{(v)}$ contains a maximal separable subfield $\mathscr{P}$ which is linearly disjoint from $\mathscr{E}'$. Lifting it, we obtain a maximal subfield $P$ of $\Delta$ which is (separable and) unramified over $K_v$, with the residue field $\mathscr{P}$. Embedding this subfield into $\Delta'$, passing to the residues and comparing the degrees, we obtain that $\mathscr{P}$ must contain $\mathscr{E}'$. A contradiction, proving the lemma.
\end{proof}

To complete the proof of the theorem, fix a maximal separable subfield $L$ of $D$. Then it follows from Lemma \ref{L:unram} that $\gen(D) \subset \Br(L/K)_{V^K}$.
As we observed in subsection 1, for any finite separable extension $F/K$, the groups $\Pic(F , V^F)$ and $\mathrm{U}(F , V^F)$ are finitely generated. So, $\Br(L/K)_{V^K}$ is finite by Proposition \ref{P:primary}, and the finiteness of $\gen(D)$ follows. \hfill $\Box$

\section{Generalizations: unramified $H^3$}\label{S:G2}

The notion of the genus can be extended to algebraic groups as follows. Let $G$ be an absolutely almost simple algebraic group over a field $K$. We then define the genus $\gen_K(G)$ as the set of $K$-isomorphism classes of (inner) $K$-forms $G'$ of $G$ that have the same isomorphism classes of maximal $K$-tori as $G$. Extending the connection between the genus $\gen(D)$ of a central division $K$-algebra $D$ of degree $n$ and the unramified Brauer group ${}_n\Br(K)_V$ and its relevant subgroups, for a suitable set $V$ of discrete valuations of $K$, we have shown \cite[Theorem 5]{CRR3a}  that if $V$ satisfies (A) and the following condition

\vskip2mm

 (B) the residue field $K^{(v)}$ is finitely generated for all $v \in V$,

\vskip2mm

\noindent then given an absolutely almost simple simply connected $K$-group $G$ that has good reduction at all $v \in V$, any $K$-form of $G$ that has the same isomorphism classes of maximal $K$-tori as $G$ also has good reduction at all $v \in V$ (we refer the reader to \cite[\S1, Notations and conventions]{CRR3} for the definition of good reduction). As a consequence, one obtains that the genus $\gen_K(G)$ is finite if a (finitely generated) field $K$ can be equipped with a set $V$ of discrete valuations that satisfies (A) and (B) and is such that for any finite $S \subset V$, the set of $K$-isomorphism classes of (inner) $K$-forms of $G$ that have good reduction at all $v \in V \setminus S$ is finite. Conjecturally, this property should hold for any divisorial set of places $V$ of a finitely generated field $K$ (possibly, with some restrictions on the characteristic depending on the type of $G$). The finiteness of ${}_n\Br(K)_V$ implies the truth of this property for inner forms of type $\textsf{A}_{\ell}$ over a finitely generated field $K$ of characteristic prime to $\ell + 1$. Recently, we established this property for the spinor groups of quadratic forms, some unitary groups, and groups of type $\textsf{G}_2$ over the function field $K = k(C)$ of a smooth geometrically integral curve $C$ over a number field $k$ (see \cite{CRR3}). This relied on the finiteness of the unramified cohomology groups $H^i(K , \mu_2)_V$ for all $i \geq 1$ with coefficients in $\mu_2 = \{ \pm 1 \}$, and the case that required the most effort was $i = 3$. At this point, it does not appear that the finiteness of $H^3(K , \mu_2)_V$ is known for (substantially) more general finitely generated fields. On the other hand, the considerations in \S\ref{S:F1} suggest that the finiteness of the genus may really require the finiteness not of the entire group $H^3(K , \mu_2)_V$ but rather that of some relevant subgroup (indeed, the proof of the finiteness of $\gen(D)$ in \S\ref{S:F1} relied only on the finiteness of $\Br(L/K)_V$ and not of ${}_n\Br(K)_V$). We will postpone the details until subsection 2, and first indicate how the main assumption in \S\ref{S:F1} that $\Pic(F , V^F)$ is finitely generated for certain finite extensions $F/K$ can be upgraded to become useful in the situation at hand.

\vskip1mm

\noindent {\bf 1. Condition (T).} We begin by recalling the connection between $\Pic(K , V)$ and the ideles, which is well-known in the classical setting (cf. \cite[Ch. II, \S 17]{ANT}). So, let $K$ be a field that is equipped with a set $V$ of discrete valuations that satisfies condition (A).
For $v \in V$, we let $K_v$ denote the corresponding completion, $\mathscr{O}_v$ the valuation ring in $K_v$, and $\mathscr{U}_v = \mathscr{O}_v^{\times}$ the group of units. One defines the group of \emph{ideles} $\mathbb{I}(K , V)$ as the restricted direct product over $v \in V$ of the multiplicative groups $K^{\times}_v$  relative to the unit groups $\mathscr{U}_v$, i.e.
$$
\mathbb{I}(K , V) = \{\, (x_v) \in \prod_{v \in V} K^{\times}_v \ \vert \ x_v \in \mathscr{U}_v \ \ \text{for almost all} \ \ v \in V \,\}.
$$
Furthermore, we let
$$
\mathbb{I}^{\infty}(K , V) = \prod_{v \in V} \mathscr{U}_v
$$
denote the subgroup of \emph{integral ideles}. Since $V$ satisfies (A), one has the diagonal embedding $K^{\times} \hookrightarrow \mathbb{I}(K , V)$, the image of which will still be denoted $K^{\times}$ and called the group of \emph{principal ideles}.  Furthermore, one defines a homomorphism
$$
\nu \colon \mathbb{I}(K , V) \to \mathrm{Div}(K , V), \ \ (x_v) \mapsto \sum_{v \in V} v(x_v) \cdot v,
$$
which is easily seen to be surjective with kernel $\mathbb{I}^{\infty}(K , V)$. Since $\nu(K^{\times})$ coincides with the group of principal divisors $\mathrm{P}(K , V)$, we obtain the following.
\begin{lemma}\label{L:Id}
The map $\nu$ induces a natural identification of the quotient $\mathbb{I}(K , V) / \mathbb{I}^{\infty}(K , V) K^{\times}$ with $\mathrm{Pic}(K , V)$.
\end{lemma}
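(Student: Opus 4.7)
The plan is to verify the three facts that together make $\nu$ factor through the asserted isomorphism: (i) $\nu$ is surjective, (ii) $\ker\nu = \I^{\infty}(K,V)$, and (iii) $\nu(K^{\times}) = \mathrm{P}(K,V)$. Granted these, the short exact sequence
\[
1 \longrightarrow \I^{\infty}(K,V) \longrightarrow \I(K,V) \stackrel{\nu}{\longrightarrow} \Div(K,V) \longrightarrow 0
\]
induces an isomorphism $\bar{\nu}\colon \I(K,V)/\I^{\infty}(K,V) \stackrel{\sim}{\to} \Div(K,V)$; since $\bar{\nu}$ carries the image of $K^{\times}$ exactly onto $\mathrm{P}(K,V)$, passing to the further quotient yields the claimed identification $\I(K,V)/\I^{\infty}(K,V)K^{\times} \cong \Pic(K,V)$.

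For (i), given a divisor $d = \sum_{v} n_v \cdot v$ with only finitely many $n_v$ nonzero, choose for each $v \in V$ a uniformizer $\pi_v \in K_v^{\times}$ and form the idele with components $\pi_v^{n_v}$ at the finitely many $v$ with $n_v \neq 0$ and $1$ elsewhere; this clearly lies in $\I(K,V)$ and maps to $d$ under $\nu$. For (ii), an idele $(x_v)$ lies in $\ker\nu$ iff $v(x_v) = 0$ for every $v \in V$, i.e.\ iff $x_v \in \mathscr{U}_v$ for every $v$, which is precisely the condition defining $\I^{\infty}(K,V)$. For (iii), the diagonal embedding sends $a \in K^{\times}$ to $(a,a,\ldots)$, whose $\nu$-image is $\sum_{v} v(a) \cdot v = (a)$; note that condition (A) is exactly what guarantees this sum is finite, so that the diagonal really does land in $\I(K,V)$ and its $\nu$-image lies in $\Div(K,V)$. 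By the definition of $\mathrm{P}(K,V)$, this proves both inclusions.

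There is no genuine obstacle here; the content is essentially packaging the definitions. The only point worth flagging is that (iii) relies on (A): without the finiteness of $V(a)$ for each $a$, the diagonal map $K^{\times} \to \I(K,V)$ would not even be defined. Once this is noted, the statement reduces to a one-line diagram chase combining the isomorphism $\bar{\nu}$ with the definition of $\Pic(K,V)$ as $\Div(K,V)/\mathrm{P}(K,V)$.
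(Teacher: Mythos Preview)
Your proof is correct and follows exactly the route the paper takes: the text preceding the lemma already records that $\nu$ is surjective with kernel $\mathbb{I}^{\infty}(K,V)$ and that $\nu(K^{\times}) = \mathrm{P}(K,V)$, so the lemma is stated as an immediate consequence without a separate proof. Your write-up simply supplies the (routine) verifications of these three facts.
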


Starting with this idelic description of $\Pic(K , V)$ and imitating the definition of the ``class number" of an algebraic group defined over a number field
(cf. \cite[Ch. VIII]{PlRa}), one can associate an object similar to $\Pic(K , V)$ to any algebraic $K$-group $G$.  For this, we first define the group of {\it rational adeles} $G(\mathbf{A}(K , V))$ and its subgroups of integral and principal adeles. Fix a faithful $K$-representation $G \subset \mathrm{GL}_n$ so that one can unambiguously talk about the group of points $G(R) = G(K) \cap \mathrm{GL}_n(R)$ over a subring $R \subset K$. For $v \in V$, denote by $\mathscr{O}_{K , v}$ the valuation ring of $v$ in $K$. We then define
$$
G(\mathbf{A}(K , V)) = \{ \, (g_v) \in \prod_{v \in V} G(K) \ \vert \ g \in G(\mathscr{O}_{K , v}) \ \ \text{for almost all} \ \ v \in V \, \}.
$$
Thus, $G(\mathbf{A}(K , V))$ is the {\it restricted product} of the groups $G(K)$, one for each $v \in V$, relative to the distinguished subgroups $G(\mathscr{O}_{K , v})$ - cf. \cite[\S3.5]{PlRa} for the general definition of a restricted product. (Of course, one can also define the group of {\it full adeles} $G(\mathbb{A}(K , V))$ as the restricted products of the $G(K_v)$'s relative to the $G(\mathscr{O}_v)$'s, but rational adeles are somewhat more convenient in the current context.) It follows from (A) that any $K$-isomorphism $f \colon G \to H$ induces an isomorphism between $G(\mathscr{O}_{K , v})$ and $H(\mathscr{O}_{K , v})$ for almost all $v$, showing that the group $G(\mathbf{A}(K , V))$ is independent of the initial choice of the matrix realization $G \subset \mathrm{GL}_n$. Next, we define the group of integral (rational) adeles as
$$
G(\mathbf{A}\!^{\infty}(K , V)) = \prod_{v \in V} G(\mathscr{O}_{K , v})
$$
(note that this group does depend on the choice of the matrix realization $G \subset \mathrm{GL}_n$, and that the subgroup $G(\mathbb{A}^{\infty}(K , V))$ of integral adeles in the group of full adeles is defined similarly). Finally, due to condition (A), we have the diagonal embedding $G(K) \hookrightarrow G(\mathbf{A}(K , V))$, the image of which will still be denoted $G(K)$ and called the group of {\it principal adeles}.

\vskip2mm

\addtocounter{thm}{1}

\noindent {\bf Definition 3.2.} The set of double cosets $G(\mathbf{A}\!^{\infty}(K , V)) \backslash G(\mathbf{A}(K , V)) / G(K)$ is called the {\it class set} of $G$ (over $K$ with respect to $V$) and denoted $\mathrm{Cl}(G, K, V)$.

\vskip2mm

One can similarly consider double cosets $G(\mathbb{A}^{\infty}(K , V)) \backslash G(\mathbb{A}(K , V)) / G(K)$, which is what one does for algebraic groups over number fields (cf. \cite[Ch. VIII]{PlRa}). In this regard, we observe that the obvious equality
$$
G(\mathbb{A}^{\infty}(K , V)) \cap G(\mathbf{A}(K , V)) = G(\mathbf{A}\!^{\infty}(K , V))
$$
implies that the natural map
$$
G(\mathbf{A}\!^{\infty}(K , V)) \backslash G(\mathbf{A}(K , V)) / G(K) \longrightarrow G(\mathbb{A}^{\infty}(K , V)) \backslash G(\mathbb{A}(K , V)) / G(K)
$$
is injective, and is in fact a bijection if $G(K)$ is dense in $G(K_v)$ for all $v \in V$. The use of rational adeles instead of the full adeles enables us
to avoid additional complications  arising from the fact that $G$ may fail to have weak approximation.

\vskip2mm

\addtocounter{thm}{1}

\noindent {\bf Remark 3.3.} G.~Harder \cite[2.3]{Harder} linked double cosets with the $\check{\rm C}$ech cohomology of group schemes over Dedekind rings. We will extend this connection to the general case in \S\ref{S:DC-H1} of the Appendix.

\vskip2mm

It follows from the above discussion that for the multiplicative group $G = \mathbb{G}_m$ the elements of the class set $\mathrm{Cl}(G, K , V)$ are in bijection with the elements of the Picard group $\Pic(K , V)$. In the general case, however, $\mathrm{Cl}(G, K, V)$ may not have a natural group structure, which makes any sort of requirement of its finite generation meaningless. On the other hand, what we really used in \S\ref{S:F1} was the following consequence of the finite
generation of $\Pic(K , V)$: there exists a finite subset $S \subset V$ such that $\Pic(K , V \setminus S) = 0$. This property already easily extends to arbitrary algebraic groups.

\vskip2mm

\addtocounter{thm}{1}

\noindent {\bf Definition 3.4.} We say that an algebraic $K$-group $G$ satisfies {\it Condition} (T) with respect to a set $V$ of discrete valuation of $K$ (always assumed to satisfy condition (A)) if there exists a finite subset $S \subset V$ such that $\mathrm{Cl}(G, K, V \setminus S)$ reduces to a single element, i.e. $G(\mathbf{A}(K , V \setminus S)) = G(\mathbf{A}\!^{\infty}(K , V \setminus S)) G(K)$.

\vskip2mm

We note that while the group $G(\mathbf{A}\!^{\infty}(K , V))$, hence the class set $\mathrm{Cl}(G, K, V)$, depends on the choice of a matrix realization $G \subset \mathrm{GL}_n$, the fact that $G$ satisfies Condition (T) does not. If $K$ is a number field and $V$ is the set of all (pairwise inequivalent) nonarchimedean valuations of $K$, the class set $\mathrm{Cl}(G, K, V)$ is known to be {\it finite} for any algebraic $K$-group $G$ (cf. \cite{Borel} and \cite[Theorem 5.1]{PlRa}), which implies that Condition (T) holds in this situation. Indeed, let $g(i) = (g(i)_v)$, where $i = 1, \ldots , r$, be a finite system representatives of the double cosets $G(\mathbf{A}\!^{\infty}(K , V)) \backslash G(\mathbf{A}(K , V)) / G(K)$. One can find a finite subset $S \subset V$ such that
$$
g(i)_v \in G(\mathscr{O}_{K , v}) \ \ \text{for all} \ \ v \in V \setminus S \ \ \text{and all} \ \ i = 1, \ldots r.
$$
Let $\pi_S \colon G(\mathbf{A}(K , V)) \to G(\mathbf{A}(K , V \setminus S))$ be the natural projection. Then
$$
G(\mathbf{A}(K , V \setminus S)) = \pi_S(G(\mathbf{A}(K , V))) = \pi_S\left(\bigcup_{i = 1}^r G(\mathbf{A}\!^{\infty}(K , V)) g(i) G(K) \right)
$$
$$
= \bigcup_{i = 1}^r  G(\mathbf{A}\!^{\infty}(K , V \setminus S)) \pi_S(g(i)) G(K) = G(\mathbf{A}\!^{\infty}(K , V \setminus S)) G(K)
$$
because by our construction $\pi_S(g(i)) \in G(\mathbf{A}\!^{\infty}(K , V \setminus S))$ for all $i$, as required. No other results on condition (T) seem to be available in the literature, so in \S\ref{S:T} we will verify (T) for split reductive groups over the function field $K = k(C)$ of a smooth geometrically integral curve $C$ over a finitely generated field $k$ when $V$ the set of geometric places of $K$ --- see Theorem \ref{T:CondT}.

\vskip2mm

We will need the following consequence of the condition $\vert \mathrm{Cl}(G, K, V) \vert = 1$, which is an analogue of Lemma \ref{L:cycl2}.
\begin{lemma}\label{L:unif}
Let $V$ be a set of discrete valuations of $K$ that satisfies condition {\rm (A)}, and let $D$ be a central simple $K$-algebra. Assume that for $G = \mathrm{GL}_{1 , D}$ the class set $\mathrm{Cl}(G, K, V)$ reduces to a single element. Then, given $v \in V$ such that $D_v =D \otimes_K K_v$ is isomorphic to  $\mathrm{M}_{\ell_v}(\Delta_v)$, where $\Delta_v$ is a central division $K_v$-algebra of degree $d_v$, there exists $t_v \in D^{\times}$ satisfying
$$
v(\mathrm{Nrd}_{D/K}(t_v)) = d_v \ \ \text{and} \ \ v'(\mathrm{Nrd}_{D/K}(t_v)) = 0 \ \ \text{for all} \ \ v' \in V \setminus \{ v \}.
$$
\end{lemma}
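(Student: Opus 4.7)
\medskip

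\noindent\textbf{Plan.} The statement is the adelic analogue of Lemma~\ref{L:cycl2}, with the hypothesis $\vert \mathrm{Cl}(G, K, V)\vert = 1$ (for $G = \mathrm{GL}_{1, D}$) playing the role of the vanishing $\Pic(L, V^L) = 0$ used in the cyclic case. The overall plan is to build a rational adele that ``looks like a uniformizer'' at the distinguished place $v$ and is trivial everywhere else, then use the single-class hypothesis to trade it (modulo an integral adele) for a global element of $D^{\times}$ which will be our $t_v$.

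For the local ingredient, fix an identification $D_v \simeq \mathrm{M}_{\ell_v}(\Delta_v)$ and a uniformizer $\pi \in \Delta_v^{\times}$ for the (unique) extension of $v$ to $\Delta_v$. As $\Delta_v$ is a central division $K_v$-algebra of degree $d_v$, one has $v(\Nrd_{\Delta_v/K_v}(\pi)) = 1$, so the element $\tilde a_v \in D_v^{\times}$ represented under the above isomorphism by $\mathrm{diag}(\pi^{d_v}, 1, \ldots, 1)$ satisfies $v(\Nrd_{D_v/K_v}(\tilde a_v)) = d_v$. Since $D^{\times}$ is $v$-adically dense in $D_v^{\times}$ and the composition $v \circ \Nrd_{D_v/K_v}$ is locally constant on $D_v^{\times}$, one can approximate $\tilde a_v$ by an element $a_v \in D^{\times}$ with $v(\Nrd_{D/K}(a_v)) = d_v$.

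Next, form the rational adele $\mathbf{a} = (\mathbf{a}_{v'})_{v' \in V} \in G(\mathbf{A}(K, V))$ with $\mathbf{a}_v := a_v$ and $\mathbf{a}_{v'} := 1$ for $v' \neq v$ (this is a valid adele since the only possibly non-integral coordinate is at $v$), invoke the single-class hypothesis to decompose $\mathbf{a} = \boldsymbol{\alpha} \cdot \beta$ with $\boldsymbol{\alpha} = (\alpha_{v'}) \in G(\mathbf{A}\!^{\infty}(K, V))$ and $\beta \in G(K) = D^{\times}$, and set $t_v := \beta$. The coordinatewise identity $\mathbf{a}_{v'} = \alpha_{v'}\, \beta$ yields $\beta^{-1} = \alpha_{v'} \in G(\mathscr{O}_{K, v'})$ for every $v' \neq v$, and $a_v = \alpha_v \beta$ at $v$. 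Under any faithful matrix realization $G \hookrightarrow \mathrm{GL}_N$, the determinant is a positive power of $\Nrd_{D/K}$ (for instance, $\det = \Nrd^{\deg D}$ for the left regular representation), so membership in $\mathrm{GL}_N(\mathscr{O}_{K, v'})$ forces the reduced norm to lie in $\mathscr{O}_{K, v'}^{\times}$. Applying $v'(\Nrd(-))$ to the two displayed relations therefore yields $v'(\Nrd(\beta)) = 0$ for all $v' \neq v$ and $v(\Nrd(\beta)) = v(\Nrd(a_v)) = d_v$. The only subtle point is this last compatibility between matrix-realization-dependent integrality and the intrinsic reduced norm, which is the main thing to verify carefully; everything else is a direct coordinatewise manipulation.
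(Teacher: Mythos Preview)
Your approach is essentially the same as the paper's: build a local element with reduced norm of $v$-valuation $d_v$, approximate it by an element of $D^\times$, form a rational adele supported at $v$, and use the single-class hypothesis to extract the global $t_v$.

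There is one slip in your local step. For a uniformizer $\pi$ of the division algebra $\Delta_v$ one has in general $v(\Nrd_{\Delta_v/K_v}(\pi)) = d_v/e$, where $e$ is the ramification index of $\Delta_v/K_v$, not $1$; your claim holds only when $\Delta_v$ is totally ramified. Consequently your element $\mathrm{diag}(\pi^{d_v},1,\ldots,1)$ has $v(\Nrd) = d_v^2/e$ rather than $d_v$. The paper sidesteps this by taking a uniformizer $\pi_v \in K_v^\times$ of the \emph{base field} and setting $x_v = \mathrm{diag}(\pi_v,1,\ldots,1) \in \mathrm{M}_{\ell_v}(\Delta_v)$, for which $\Nrd_{D_v/K_v}(x_v) = \Nrd_{\Delta_v/K_v}(\pi_v) = \pi_v^{d_v}$ directly. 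With this trivial fix your argument goes through unchanged. For the compatibility between integrality and the reduced norm at $v' \neq v$, the paper (Remark~3.6) argues via boundedness rather than determinants: if $h \in G(\mathscr{O}_{K,v'})$ then the cyclic group $\langle h \rangle$ is $v'$-bounded, hence so is its image under the $K$-defined character $\Nrd$, which forces $\Nrd(h) \in \mathscr{O}_{K,v'}^\times$ independently of the matrix realization chosen.
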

\begin{proof}
Let $\pi_v \in K_v^{\times}$ be a uniformizer. Then for
$$
x_v = \mathrm{diag}(\pi_v, 1, \ldots , 1) \in \mathrm{M}_{\ell_v}(\Delta_v) \simeq D_v
$$
we obviously have $\mathrm{Nrd}_{D_v/K_v}(x_v) = \pi_v^{d_v}$. Using the density of $D$ in $D_v$, we  find $y_v \in D^{\times}$ such that $v(\mathrm{Nrd}_{D/K}(y_v)) = d_v$. Consider an adele $(g_w) \in G(\mathbf{A}(K , V))$ with the following components
$$
g_w = \left\{ \begin{array}{ccc} y_v & , & w = v, \\ 1 & , & w \neq v.   \end{array}  \right.
$$
Since $\vert \mathrm{Cl}(G, K, V) \vert = 1$, we can write $g = ht$ with $h \in G(\mathbf{A}\!^{\infty}(K , V))$ and $t \in G(K) = D^{\times}$. Then $t = h_v^{-1} y_v$, and since $h_v \in G(\mathscr{O}_{K , v})$, we obtain that
$$
v(\mathrm{Nrd}_{D/K}(t)) = v(\mathrm{Nrd}_{D/K}(y_v)) = d_v.
$$
On the other hand, for any $v' \neq v$ we have $t \in G(\mathscr{O}_{K , v'})$, so
$$
v'(\mathrm{Nrd}_{D/K}(t)) = 0,
$$
as required.
\end{proof}

\vskip1mm

\addtocounter{thm}{1}

\noindent {\bf Remark 3.6.} In the lemma, there was no need to specify a matrix realization of $G$ as the following is true for {\it any} realization: If $h \in G(\mathscr{O}_{K , w})$, then $w(\mathrm{Nrd}_{D/K}(h)) = 0$. This immediately follows from the observation that for such $h$, the subgroup $\langle h \rangle$ is $w$-bounded, so its image under the reduced norm, which is the restriction of a $K$-defined character of $G$, is also $w$-bounded, hence is contained in the $w$-units of $K$. We also note that the conclusion of Lemma \ref{L:unif} can be obtained under the weaker
assumption that the ``stable" class number of $G$ is one. More precisely, for $t \geqslant 1$, we consider the natural embedding $\tau_t$ of $G = \mathrm{GL}_{1 , D}$ into $G_t := \mathrm{GL}_{t , D}$ given by
\begin{equation}\label{E:Can-Emb}
x \mapsto \left( \begin{array}{cccc} x & & & \\ & 1 & & \\ & & \ddots & \\ & & & 1 \end{array} \right),
\end{equation}
and will use the same notation for the embedding of the corresponding groups of adeles. Assume that for any $g \in
G(\mathbf{A}(K, V))$, there exists $t \geqslant 1$ such that $\tau_t(g) \in G_t(\mathbf{A}\!^{\infty}(K, V)) G_t(K)$ (which of course is automatically true if $\vert \mathrm{Cl}(G_t, K, V) \vert = 1$ for some $t \geqslant 1$). Then the conclusion of Lemma \ref{L:unif} still holds.

\vskip2mm

\noindent {\bf 2. The finiteness of some subgroups of unramified $H^3$.} Let $K$ be a field of characteristic $\neq 2$ equipped with a set $V$ of discrete valuations that satisfies condition (A), and let $\mu_2 = \{ \pm 1 \}$. The goal of this subsection is to present a technique for proving the finiteness of some subgroups of the unramified cohomology group $H^3(K , \mu_2)_V$. This is relevant for proving the finiteness of the genus of simple algebraic groups of type $\textsf{G}_2$, which is expected to hold over all finitely generated fields (at least of characteristic $\neq 2, 3$), but has been established so far only in certain cases, including the situations where $K$ is a global field or the function field of a smooth geometrically integral curve over a number field (cf. \cite[\S 8]{CRR3}).

We recall the description of simple groups of type $\textsf{G}_2$. Let $G_0$ be the split group of type $\textsf{G}_2$ over a field $K$ of characteristic $\neq 2$. Then the $K$-isomorphism classes of $K$-groups of type $\textsf{G}_2$ are in a natural one-to-one correspondence with the elements of the (pointed) set $H^1(K , G_0)$ (we recall that in this case $G_0$ is naturally identified with its automorphism group $\mathrm{Aut}(G_0)$ by sending every element $g \in G_0$ to the corresponding inner automorphism $\mathrm{Int}\: g$; in what follows, we will freely use this identification). Furthermore, there is a natural map
$$
\lambda_K \colon H^1(K , G_0) \to H^3(K , \mu_2)
$$
that has the following explicit description: if $\xi \in H^1(K , G_0)$ and the twisted group $G = {}_{\xi}G_0$ is the automorphism group of the octonian algebra $\mathbb{O} = \mathbb{O}(a, b, c)$ corresponding to a triple $(a, b, c) \in (K^{\times})^3$ then
$$
\lambda_K(\xi) = \chi_a \cup \chi_b \cup \chi_c,
$$
where for $t \in K^{\times}$, we let $\chi_t$ denote the class in $H^1(K , \mu_2)$ of the cocycle given by
$$
\chi_t(\sigma) = \frac{\sigma(\sqrt{t})}{\sqrt{t}}, \ \ \sigma \in \Ga(K^{\mathrm{sep}}/K).
$$
It is well-known that $\lambda_K$ is injective (cf. \cite[Ch. III, Appendix 2, 3.3]{Serre-GC}). Furthermore, $G$ as above contains
a maximal $K$-torus of the form $T = (\mathrm{R}_{K(\sqrt{a})/K}^{(1)})^2$. So, any $G' \in \gen_K(G)$ will also contain such a torus,
and therefore is split over $K(\sqrt{a})$. It follows that $G'$ is represented by a triple of the form $(a, b', c') \in (K^{\times})^3$.

Now, let $v$ be a discrete valuation of $K$ such that $\mathrm{char}\: K^{(v)} \neq 2$. Then there is the residue map
$$
r^i_v \colon H^i(K , \mu_2) \to H^{i-1}(K^{(v)} , \mu_2) \ \ \text{for all} \ \ i \geqslant 1,
$$
and we say that $x \in H^i(K , \mu_2)$ is {\it unramified} at $v$ if its image $r^i_v(x)$ is trivial. The automorphism group $G$ of an octonian algebra $\mathbb{O}$ has good reduction at $v$ if and only if $\mathbb{O}$ can be represented by a triple $(a, b, c) \in (K^{\times})^3$ such that
$$
v(a) = v(b) = v(c) = 0.
$$
It follows that if $G = {}_{\xi} G_0$ has good reduction at $v$, then the cocycle $\lambda_K(\xi)$ is unramified at $v$. Assume now that $K$ is equipped with a set $V$ of discrete valuations that satisfies (A), (B), and such that $\mathrm{char}\: K^{(v)} \neq 2$ for all $v \in V$, and $G$ has good reduction at all $v \in V$. Then according to \cite[Theorem 5]{CRR3a}, any $G' \in \gen_K(G)$ has good reduction at all $v \in V$. Thus, if the number of cohomology classes of the form $\chi_{a} \cup \chi_{b'} \cup \chi_{c'}$ with $b', c' \in K^{\times}$ that are unramified at all $v \in V$ is finite, then $\gen_K(G)$ is also finite. We will now reformulate this in $K$-theoretic terms.

Let $K_i^M(K)$ $(i \geqslant 1)$ denote the $i$th Milnor $K$-group of the field $K$ (cf. \cite[Ch. 7]{Gille}) for the basic definitions), and set $$k_i(K) = K_i^M(K) /(2 \cdot K_i^M(K)).$$ For $a_1, \ldots , a_i \in K^{\times}$, we let $(a_1, \ldots , a_i)$ denote the corresponding symbol in $k_i(K)$, i.e. the image of $a_1 \otimes \cdots \otimes a_i$. According to Milnor's conjecture, proved by Voevodsky, the correspondence
$$
(a_1, \ldots , a_i) \mapsto \chi_{a_1} \cup \cdots \cup \chi_{a_i}
$$
extends to an isomorphism  $\kappa_i \colon k_i(K) \to H^i(K , \mu_2)$. Moreover, if $v$ is a discrete valuation of $K$ such that $\mathrm{char}\: K^{(v)} \neq 2$, then we have the following commutative diagram
$$
\xymatrix{k_i(K) \ar[r]^{\kappa_i} \ar[d]_{\partial^i_v} & H^i(K , \mu_2) \ar[d]^{r_v^i} \\ k_{i-1}(K^{(v)}) \ar[r]^{\kappa_{i-1}} & H^{i-1}(K^{(v)} , \mu_2)}
$$
where ${\partial}^i_v$ is the residue map in Milnor $K$-theory (cf. \cite[7.5]{Gille}). In particular, a symbol
$(a_1, \ldots , a_i) \in k_i(K)$ is unramified at $v$ (i.e., has the trivial image under ${\partial}^i_v$) if and only if the cohomology class $\chi_{a_1} \cup \cdots \cup \chi_{a_i} \in H^i(K , \mu_2)$ is unramified as defined above.  Putting all this together, we obtain the following.
\begin{lemma}\label{L:G2finite}
Let $K$ be a field that is equipped with a set $V$ of discrete valuations that satisfies {\rm (A)} and {\rm (B)}, and is such that $\mathrm{char}\: K^{(v)} \neq 2$
for all $v \in V$. Let $G$ be the automorphism group of an octonion algebra $\mathbb{O} = \mathbb{O}(a, b, c)$, and assume that $G$ has good reduction at all
$v \in V$. If the number of symbols $(a, b', c') \in k_3(K)$, where $b' , c' \in K^{\times}$, that are unramified at all $v \in V$ is finite, then $\gen_K(G)$ is also finite.
\end{lemma}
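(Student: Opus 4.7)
The plan is to realize $\gen_K(G)$ as a subset of the set of symbols $(a, b', c') \in k_3(K)$ unramified at all $v \in V$, and then invoke the assumed finiteness of that set.

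More precisely, I would proceed as follows. Let $G' \in \gen_K(G)$. Since the $K$-isomorphism classes of inner $K$-forms of the split group $G_0$ of type $\textsf{G}_2$ are classified by $H^1(K , G_0)$, there is $\xi' \in H^1(K, G_0)$ with $G' = {}_{\xi'}G_0$. By the discussion preceding the lemma, the fact that $G$ contains a maximal $K$-torus of the form $T = (\mathrm{R}_{K(\sqrt{a})/K}^{(1)})^2$ and the definition of the genus imply that $G'$ also contains such a torus, so $G'$ splits over $K(\sqrt{a})$ and is therefore represented by a triple $(a, b', c') \in (K^{\times})^3$. Thus
$$
\lambda_K(\xi') = \chi_a \cup \chi_{b'} \cup \chi_{c'} = \kappa_3((a, b', c')).
$$
This defines a map
$$
\Phi \colon \gen_K(G) \longrightarrow k_3(K), \quad G' \mapsto (a, b', c'),
$$
and the composition $\kappa_3 \circ \Phi$ coincides with $G' \mapsto \lambda_K(\xi')$, which is injective thanks to the injectivity of $\lambda_K$. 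Since $\kappa_3$ is an isomorphism, $\Phi$ itself is injective.

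Next I would show that the image of $\Phi$ consists of symbols unramified at all $v \in V$. Since $G$ has good reduction at every $v \in V$, the result \cite[Theorem 5]{CRR3a} (whose hypotheses (A) and (B) are in force) guarantees that every $G' \in \gen_K(G)$ also has good reduction at all $v \in V$. As recalled before the lemma, having good reduction at $v$ means that $G'$ can be represented by a triple of units, say $(a'', b'', c'')$ with $v(a'') = v(b'') = v(c'') = 0$; in particular, the class $\lambda_K(\xi') = \chi_{a''} \cup \chi_{b''} \cup \chi_{c''}$ is unramified at $v$, i.e.\ $r^3_v(\lambda_K(\xi')) = 0$. Via the commutative diagram involving $\kappa_3$, $\kappa_2$, $\partial^3_v$, and $r^3_v$, this is equivalent to $\partial^3_v((a, b', c')) = 0$, so $\Phi(G')$ lies in the set of symbols of the form $(a, b', c')$ that are unramified at every $v \in V$.

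Combining these two observations, $\Phi$ is an injection from $\gen_K(G)$ into the set of symbols $(a, b', c') \in k_3(K)$ unramified at all $v \in V$, which is finite by hypothesis; hence $\gen_K(G)$ is finite. The only point requiring any care is the passage between the cohomological unramified condition (which is what good reduction yields directly) and the $K$-theoretic one (which is what the hypothesis is phrased in), and this is handled entirely by the commutative square with $\kappa_i$ and the residue maps; once that translation is in place, the argument is essentially formal.
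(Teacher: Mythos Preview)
Your proposal is correct and follows essentially the same approach as the paper: the lemma is presented there as the summary of the preceding discussion (``Putting all this together, we obtain the following''), which establishes exactly the injection of $\gen_K(G)$ into the set of unramified symbols $(a,b',c')$ via the injectivity of $\lambda_K$, the result \cite[Theorem 5]{CRR3a} on good reduction, and the compatibility of $\kappa_i$ with the residue maps. Your write-up makes the logic explicit and handles the cohomology/$K$-theory translation carefully, but there is no substantive difference from the paper's argument.
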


Now, fix $a \in K^{\times}$ and consider the map
$$
f_a \colon {}_2\Br(K) = H^2(K , \mu_2) \to H^3(K , \mu_2), \ \ \alpha \mapsto \chi_a \cup \alpha,
$$
noting that in terms of the above identifications given by $\kappa_i$ $(i = 2, 3)$, this map is equivalent to
$$
k_2(K) \to k_3(K), \ \ (b , c) \mapsto (a , b , c).
$$
It follows from Lemma \ref{L:G2finite} that to prove the finiteness of $\gen_K(G)$, it would be enough to prove the finiteness of
$\mathrm{Im}\: f_a \cap H^3(K , \mu_2)_V$. Since the finiteness of ${}_2\Br(K)_V$, where $V$ is a divisorial set of places of a finitely generated field $K$, is already known \cite{CRR2}, one can attempt to prove the finiteness of $\mathrm{Im}\: f_a \cap H^3(K , \mu_2)_V$ by showing that this intersection is commensurable with $f_a({}_2\Br(K)_V)$ (we note that if $v(a) = 0$ and $\mathrm{char}\: K^{(v)} \neq 2$, then $\chi_a$ is unramified, hence $f_a({}_2\Br(K)_V) \subset H^3(K , \mu_2)_V$). So far, we have not been able to prove this in the general case, however, as the following result shows, the finiteness of some subgroups of $\mathrm{Im}\: f_a \cap H^3(K , \mu_2)_V$ can be established assuming Condition (T) for algebraic groups associated with some quaternion algebras. We will formulate it in terms of symbols rather than cohomology classes given by the corresponding cup-products as this simplifies the notations.
\begin{thm}\label{T:101}
Fix $a , b_1, \ldots b_r \in K^{\times}$ and consider the subgroup $\Delta \subset k_3(K)$ consisting of elements of the form $$t(c_1, \ldots , c_r) = \sum_{i = 1}^r  (a, b_i, c_i) \ \ \text{for} \ \ c_1, \ldots , c_r \in K^{\times}.$$ Given a subset $V' \subset V$, we let $\Delta_{\mathrm{ur} , V'}$ denote
the subgroup of $\Delta$ consisting of elements that are unramified at all $v \in V'$, and for a nonempty subset $J$ of  $\{ 1, \ldots , r \}$, let $D_J$ be the quaternion algebra
$$
\left( \frac{a , b_J}{K} \right) \ \ \text{where} \ \ b_J = \prod_{i \in J} b_i.
$$
Assume that Condition {\rm (T)} holds for the groups $G_J = \mathrm{GL}_{1 , D_J}$ for all $J \subset \{ 1, \ldots , r \}$. Then there exists a subset $V' \subset V$ with finite complement such that $a , b_1, \ldots , b_r \in \mathrm{U}(K , V')$ and any $x \in \Delta_{\mathrm{ur} , V'}$ is of the form $x = t(c_1, \ldots , c_r)$ for some $c_1, \ldots , c_r \in \mathrm{U}(K , V')$. Consequently, if $\mathrm{U}(K , V)$, or, equivalently, $\mathrm{U}(K , V')$, is finitely generated, then $\Delta_{\mathrm{ur} , V'}$ (hence also $\Delta_{\mathrm{ur} , V}$) is finite.
\end{thm}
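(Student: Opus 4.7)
The plan is to adapt the strategy of Proposition~\ref{P:cycl}, with Milnor $k_3$ replacing the relative Brauer group and with the quaternion algebras $D_J$ playing the role previously played by a single cyclic algebra. First, I would shrink $V$: removing the finitely many places where some $a, b_i$ is a non-unit, where $\mathrm{char}\: K^{(v)} = 2$, or where Condition (T) fails for one of the finitely many $G_J$, produces a subset $V' \subset V$ with finite complement such that $a, b_1, \ldots, b_r \in \mathrm{U}(K, V')$, every $D_J$ is unramified at every $v \in V'$, and $\mathrm{Cl}(G_J, K, V')$ is trivial for all nonempty $J \subset \{1, \ldots, r\}$.

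I would next extract the essential consequence of the unramified condition using the residue map in Milnor K-theory. For $v \in V'$, since $a, b_i \in \mathrm{U}(K, V')$, a direct computation of the tame symbol gives
$$
{\partial}^3_v\bigl((a, b_i, c_i)\bigr) \ = \ v(c_i) \cdot (\bar{a}, \bar{b}_i) \ \in \ k_2(K^{(v)}).
$$
Hence for $x = t(c_1, \ldots, c_r) \in \Delta_{\mathrm{ur}, V'}$ we obtain $\sum_i v(c_i) (\bar{a}, \bar{b}_i) = 0$, and since $k_2(K^{(v)})$ is $2$-torsion, reducing modulo $2$ yields $(\bar{a}, \bar{b}_{J_v}) = 0$ where $J_v := \{\,i : v(c_i) \text{ is odd}\,\}$. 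Under the isomorphism $\kappa_2$, this says precisely that the quaternion algebra $D_{J_v}$ splits at $v$.

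The crux is then to identify the ``legal moves'' on representatives and use Condition (T) to push the $c_i$ into $\mathrm{U}(K, V')$. By the projection formula for the norm in Milnor K-theory applied to a maximal subfield of $D_J$, for any nonempty $J$ and any $s \in \mathrm{Nrd}(D_J^\times)$ one has $(a, b_J, s) = 0$ in $k_3(K)$; consequently, the substitution $c_i \mapsto c_i s$ for $i \in J$ (leaving $c_j$ unchanged for $j \notin J$) alters $x$ by $(a, b_J, s) = 0$, hence preserves it. With this in hand, I would sweep over the finite set $V' \cap \bigcup_i V(c_i)$ one $v$ at a time. At each such $v$, since $D_{J_v}$ splits at $v$ (so the local invariant $d_v$ equals $1$), Condition (T) together with Lemma~\ref{L:unif} applied to $G_{J_v}$ yields $s \in \mathrm{Nrd}(D_{J_v}^\times) \subset K^\times$ with $v(s) = 1$ and $v'(s) = 0$ for $v' \neq v$; replacing $c_i$ by $c_i s^{-1}$ for $i \in J_v$ turns every $v(c_i)$ into an even number while leaving other valuations untouched. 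Next, for each $i$ with $v(c_i) \neq 0$, Condition (T) and Lemma~\ref{L:unif} applied to $G_i = \mathrm{GL}_{1, D_i}$ furnish $s_i \in \mathrm{Nrd}(D_i^\times)$ with $v(s_i) = d_{v, i} \in \{1, 2\}$ and $v'(s_i) = 0$ elsewhere; since $v(c_i)$ is even (hence divisible by $d_{v, i}$), a suitable power of $s_i$ zeroes out $v(c_i)$. All moves being $v$-local, the finitely many bad places can be handled independently, producing representatives $c_i \in \mathrm{U}(K, V')$ of the same $x$.

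Finally, the resulting map $\mathrm{U}(K, V')^r \to k_3(K)$, $(c_1, \ldots, c_r) \mapsto \sum_i (a, b_i, c_i)$, is a group homomorphism into a $2$-torsion group, hence factors through $\mathrm{U}(K, V')^r / 2\mathrm{U}(K, V')^r$; finite generation of $\mathrm{U}(K, V')$ then forces $\Delta_{\mathrm{ur}, V'}$ to be finite. I expect the main obstacle to be the combinatorial-algebraic coordination in the third paragraph: the residue condition only constrains the \emph{sum} of the parities $v(c_i)$ through a Brauer-theoretic relation, so the usable modifications from Condition (T) for the various $D_J$ must be combined carefully --- the key observation being that the set $J_v$ for which the residue vanishes is precisely the set for which Condition (T) supplies a local uniformizer simultaneously flipping the parities of all $c_i$ with $i \in J_v$.
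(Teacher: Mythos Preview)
Your proposal is correct and follows essentially the same approach as the paper. The only organizational difference is that the paper first globally reduces every $v(c_i)$ to lie in $\{0,1\}$ (using the $D_i$'s to produce reduced norms of $v$-value $2$) and then inducts on the number of remaining bad places, eliminating one at a time via the splitting of $D_J$; you instead handle each bad place in two local steps (first $D_{J_v}$ to force even parity, then the individual $D_i$'s to zero out), which amounts to the same two moves in the opposite order.
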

\begin{proof}
We can pick a subset $V' \subset V$ with finite complement such that

\vskip2mm

\noindent $\bullet$ $a, b_1, \ldots , b_r \in \mathrm{U}(K , V')$;

\vskip1mm

\noindent $\bullet$ $\vert \mathrm{Cl}(G_J, K, V') \vert = 1$ for all nonempty subsets $J \subset \{ 1, \ldots , r \}$.

\vskip2mm

\noindent We will repeatedly use the following property: {\it if $a, b, c \in K^{\times}$  are such that $c$ is a reduced norm from the quaternion algebra
$\displaystyle \left( \frac{a , b}{K} \right)$, then $(a , b , c) = 0$ in $k_3(K)$} (cf. \cite[12.1]{MS}). Now, fix $i \in \{ 1, \ldots , r \}$. Then it follows from Lemma
\ref{L:unif} that for any $v \in V'$, there exists $t_v \in K^{\times}$ which is a reduced norm from the quaternion algebra $\displaystyle \left( \frac{a , b_i}{K} \right)$ such that $v(t_v) = 2$ and $v'(t_v) = 0$ for all $v' \in V' \setminus \{ v \}$. Multiplying $c_i$ by a suitable power of $t_v$, we can construct an element
$c'_i \in K^{\times}$ such that
$$
(a, b_i, c_i) = (a, b_i, c'_i) , \ \ v(c'_i) = 0 \ \text{or} \ 1, \ \ \text{and} \ \ v'(c'_i) = v'(c_i) \ \ \text{for all} \ \ v' \in V' \setminus \{ v \}.
$$
Iterating this procedure, we may assume that $v(c_i) = 0$ or $1$ for all $v \in V'$ and all $i$.

Set
$$
V(c_1, \ldots , c_r) = \{ v \in V' \ \mid \ v(c_i) \neq 0 \ \ \text{for some} \ \ i = 1, \ldots r \}.
$$
We will induct on $d := \vert V(c_1, \ldots , c_r) \vert$. If $d = 0$, then $c_1, \ldots , c_r \in \mathrm{U}(V')$, and there is nothing to prove.
Suppose $d > 0$, and let $v \in V(c_1, \ldots , c_r)$. Then $J := \{ i \ \mid \ v(c_i) \neq 0 \}$ is nonempty. Using an explicit description of the residue
map $\partial_v$ in Milnor $K$-theory (cf. \cite[7.1]{Gille}) and
taking into account that the elements $a, b_1, \ldots , b_r$ are $v$-units, we find that
$$
{\partial}_v(t(c_1, \ldots , c_r)) = \sum_{i \in J} (\overline{a} , \overline{b_i}) = (\overline{a} , \overline{b_J}) \in k_2(K^{(v)}),
$$
where the bar denotes the image in the residue field $K^{(v)}$. Since by assumption $t(c_1, \ldots , c_r)$ is unramified and $\mathrm{char}\: K^{(v)} \neq 2$, we conclude that the quaternion algebra $\displaystyle \left( \frac{\overline{a} , \overline{b_J}}{K^{(v)}} \right)$ is trivial. Then by Hensel's lemma, the quaternion algebra $\displaystyle \left( \frac{a , b_J}{K_v}  \right) = D_J \otimes_K K_v$ is also trivial. So, it follows from Lemma \ref{L:unif} that there exists
$\pi_v \in K^{\times}$ which is a reduced norm from $D_J$ and satisfies $v(\pi_v) = 1$ and $v'(\pi_v) = 0$ for all $v' \in V' \setminus \{ v \}$. Then
\begin{equation}\label{E:X200}
(a, b_J, \pi_v) = 0 = \sum_{i \in J} (a, b_i, \pi_v) \ \ \text{in} \ \  k_3(K).
\end{equation}
Set $c'_i = c_i \pi_v^{-1}$ for $i \in J$, and $c'_i = c_i$ for $i \in \{ 1, \ldots , r \} \setminus J$. Then it follows from (\ref{E:X200}) that
$$
t(c_1, \ldots , c_r) = t(c_1, \ldots , c_r) - (a, b_J, \pi_v) = t(c'_1, \ldots , c'_r).
$$
Clearly,
$$
V(c'_1, \ldots , c'_r) = V(c_1, \ldots , c_r) \setminus \{ v \},
$$
so $t(c'_1, \ldots , c'_r) = t(u_1, \ldots , u_r)$ for some $u_1, \ldots , u_r \in \mathrm{U}(K , V')$ by the induction hypothesis, and the required fact follows.
\end{proof}

\vskip2mm

\addtocounter{thm}{1}

\noindent {\bf Remark 3.9.} It follows from Remark 3.6 that the assertion of Theorem \ref{T:101} remains valid if one assumes that each of the groups $G_J$ satisfies the ``stable'' version of Condition (T). More precisely, for $t \geqslant 1$, we let $\tau_{J , t} \colon G_J \to G_{J , t} := \mathrm{GL}_{t , D_J}$
denote the canonical embedding given by (\ref{E:Can-Emb}). Then the ``stable'' version of (T) requires that there be a subset $V' \subset V$ with finite complement such that for any $g \in G_J(\mathbf{A}(K , V'))$, there exists $t \geqslant 1$ (depending on $g$) such that $\tau_{J , t}(g) \in G_{J , t}(\mathbf{A}\!^{\infty}(K , V')) G_{J , t}(K)$. This obviously holds if $\mathrm{GL}_{\ell , D_J}$ satisfies Condition (T) for {\it some} $\ell \geqslant 1$.

\vskip2mm

\begin{cor}\label{C:Unram7}
Let $K = k(C)$ be the function field of a smooth geometrically integral curve over a field $k$ of characteristic $\neq 2$, and let $V_0$ be the set of places of $K$ associated with closed points of $C$. As in Theorem \ref{T:101}, fix $a, b_1, \ldots , b_r \in K^{\times}$ and consider the subgroup $\Delta$ of $k_3(K)$ consisting of elements of the form $t(c_1, \ldots , c_r)$ for all $c_1, \ldots , c_r \in K^{\times}$. Assume that Condition {\rm (T)} holds for $G_J$ for all subsets $J \subset \{1, \ldots , r \}$ in the above notations. Then for the subgroup of $V_0$-unramified elements  $\Delta_{\mathrm{ur} , V_0}$ we have
$$
[\Delta_{\mathrm{ur} , V_0} : \Delta_{\mathrm{ur} , V_0} \cap \Delta_0] < \infty,
$$
where $\Delta_0$ is formed by elements of the form $t(c_1, \ldots , c_r)$ with $c_1, \ldots , c_r \in k^{\times}$.
\end{cor}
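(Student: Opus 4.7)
The plan is to apply Theorem~\ref{T:101} directly to $V = V_0$ (condition (A) is clear). This yields a cofinite $V' \subseteq V_0$ with $a, b_1, \ldots, b_r \in \mathrm{U}(K, V')$ such that every element of $\Delta_{\mathrm{ur}, V'}$ has the form $t(c_1, \ldots, c_r)$ with $c_i \in \mathrm{U}(K, V')$; conversely, any such symbol is automatically unramified at every $v \in V'$ (all three entries being $v$-units), so in fact $\Delta_{\mathrm{ur}, V'} = t(\mathrm{U}(K, V')^r)$. Because $V' \subseteq V_0$ and $k^{\times} \subseteq \mathrm{U}(K, V_0) \subseteq \mathrm{U}(K, V')$, one gets both $\Delta_{\mathrm{ur}, V_0} \subseteq \Delta_{\mathrm{ur}, V'}$ and $\Delta_0 \subseteq \Delta_{\mathrm{ur}, V'}$, whence the second isomorphism theorem provides an injection
$$
\Delta_{\mathrm{ur}, V_0}/(\Delta_{\mathrm{ur}, V_0} \cap \Delta_0) \hookrightarrow \Delta_{\mathrm{ur}, V'}/\Delta_0.
$$
So the claim reduces to the finiteness of the target.

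Next I would show that $\mathrm{U}(K, V')/k^{\times}$ is finitely generated. Letting $\bar{C}$ be the smooth projective completion of $C$ and $V_{\bar{C}}$ the associated set of places of $K$, geometric integrality and properness give $\Gamma(\bar{C}, \cO_{\bar{C}}) = k$, hence $\mathrm{U}(K, V_{\bar{C}}) = k^{\times}$. Since $V_{\bar{C}} \setminus V'$ is finite, iterating the exact sequence
$$
\mathrm{U}(K, W) \longrightarrow \mathrm{U}(K, W \setminus \{v\}) \stackrel{v}{\longrightarrow} \Z
$$
from \S\ref{S:F1} embeds $\mathrm{U}(K, V')/k^{\times}$ into $\Z^{|V_{\bar{C}} \setminus V'|}$. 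Consequently $(\mathrm{U}(K, V')/k^{\times}) \otimes \Z/2\Z$ is a finite group, for which I pick representatives $\gamma_1, \ldots, \gamma_N \in \mathrm{U}(K, V')$.

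Finally, every $c_i \in \mathrm{U}(K, V')$ can be written $c_i = \alpha_i \gamma_{j(i)} d_i^2$ with $\alpha_i \in k^{\times}$, $d_i \in \mathrm{U}(K, V')$, and $j(i) \in \{1, \ldots, N\}$. Multi-additivity of symbols, together with $2 \cdot k_3(K) = 0$, then gives
$$
t(c_1, \ldots, c_r) = t(\alpha_1, \ldots, \alpha_r) + t(\gamma_{j(1)}, \ldots, \gamma_{j(r)}),
$$
where the first summand lies in $\Delta_0$ and the second takes at most $N^r$ values, so $\Delta_{\mathrm{ur}, V'}/\Delta_0$ is finite. The hardest point is really the finite generation of $\mathrm{U}(K, V')/k^{\times}$: since the corollary does not require $k$ itself to be finitely generated, one cannot quote this from \S\ref{S:F1} and must instead use the projective completion argument above, which ultimately rests on the classical fact that the global units of a smooth projective geometrically integral curve over $k$ are precisely $k^{\times}$.
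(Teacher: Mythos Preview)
Your proof is correct and follows essentially the same route as the paper: apply Theorem~\ref{T:101} to obtain a cofinite $V' \subset V_0$, observe that $\Delta_{\mathrm{ur},V_0}$ lands in the subgroup $\Delta_1 = t(\mathrm{U}(K,V')^r)$, and conclude from the finite generation of $\mathrm{U}(K,V')/k^{\times}$ that $\Delta_1/\Delta_0$ is finite. The paper simply asserts the finite generation of $\mathrm{U}(K,V')/k^{\times}$ without justification, whereas you correctly supply the projective-completion argument (needed since $k$ is not assumed finitely generated); your extra verification that $\Delta_{\mathrm{ur},V'} = t(\mathrm{U}(K,V')^r)$ via the reverse inclusion is harmless but not actually required.
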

\begin{proof}
According to the theorem there exists a subset $V' \subset V_0$ with finite complement such that $\Delta_{\mathrm{ur} , V_0}$ is contained in the subgroup $\Delta_1$ formed by $t(c_1, \ldots , c_r)$  with $c_1, \ldots , c_r \in \mathrm{U}(K , V')$. But the quotient $\mathrm{U}(K , V')/k^{\times}$ is finitely generated, which implies that the quotient $\Delta_1/\Delta_0$ is finite, and our claim follows.
\end{proof}

\vskip2mm

\noindent {\bf 3. An application.}
Let $C$ be a smooth geometrically integral projective curve over a field $k$ such that $C(k) \neq \emptyset$, let $K = k(C)$ be its function field, and let $V_0$ be the set of discrete valuations of $K$ associated with closed points of $C$. Let $J$ be the Jacobian of $C$, and assume that the
2-torsion of $J$ is $k$-rational, i.e. ${}_2J \subset J(k)$. Then the natural map $\Br(k) \to \Br(K)$ is injective and there exists a homomorphism
$$
\nu \colon {}_2J \otimes_{\Z} H^1(k , \mu_2) \longrightarrow {}_2\Br(K)
$$
so that
$$
{}_2\Br(K)_{V_0} = {}_2\Br(k) \oplus \mathscr{B}(C) \ \ \text{where} \ \ \mathscr{B}(C) = \mathrm{Im}\: \nu,
$$
cf. \cite[\S 6]{CRR2}. The homomorphism $\nu$ has the following explicit description. Let $u_1, \ldots , u_{2g}$ (where $g$ is the genus of $C$) be a basis of ${}_2J$ over $\Z/2\Z$. Identifying of $J(k)$ with $\Pic^0(C)$, we can find degree zero divisors $D_1, \ldots , D_{2g} \in \mathrm{Div}^0(C)$ whose images in $\Pic^0(C)$ are $u_1, \ldots , u_{2g}$. Furthermore, there are functions $h_1, \ldots , h_{2g} \in K^{\times}$ such that $2 D_i$ is the divisor of $h_i$ for $i = 1, \ldots , 2g$. By Kummer theory, any element of $H^1(k , \mu_2)$ has the form $\chi_c$ for some $c \in k^{\times}$
uniquely determined modulo ${k^{\times}}^2$, in our previous notations; here, however, we will write $\chi_{k , c}$ instead of $\chi_c$ to indicate the base field explicitly. Then $\nu$ is described by
$$
\nu(u_i \otimes \chi_{k , c}) = \chi_{K , h_i} \cup \chi_{K , c},
$$
cf. \cite[Proposition 6.1]{CRR2}. Thus, the ``nonconstant'' part $\mathscr{B}(C)$ of ${}_2\Br(K)_{V_0}$ in the case at hand is the following:
$$
\mathscr{B}(C) = \left\{ \sum_{i = 1}^{2g} \chi_{K , h_i} \cup \chi_{K , c_i} \ \vert \ c_1, \ldots , c_{2g} \in k^{\times}     \right\};
$$
recall that $\chi_{K , h} \cup \chi_{K , c}$ corresponds to the quaternion algebra $\displaystyle \left( \frac{h , c}{K} \right)$. (Explicit computations of ${}_2\Br(C)$ for elliptic curves $C$ can be found in \cite{CG}.)

\vskip2mm

Now, if we fix $a \in K^{\times}$ and consider the map $f_a \colon {}_2\Br(K) \to H^3(K , \mu_2)$, $\alpha \mapsto \chi_{K , a} \cup \alpha$, then in terms of the identification $H^3(K , \mu_2) \simeq k_3(K)$ given by $\kappa_3^{-1}$, the image $f_a(\mathscr{B}(C))$ is represented by elements of
the form
$$
\sum_{i = 1}^{2g} (a, h_i, c_i) \ \ \text{for} \ \ c_1, \ldots , c_{2g} \in k^{\times}.
$$
Theorem \ref{T:101} applies to the set $\Delta$ of such elements, and we obtain the following.
\begin{prop}\label{P:102}
Let $C$ be a smooth projective geometrically integral curve over a finitely generated field $k$ of characteristic $\neq 2$ that has a rational
point and whose Jacobian has rational 2-torsion. Let $K = k(C)$ be its function field, and assume that Condition {\rm (T)} holds for any group of the form
$G = \mathrm{GL}_{1 , D}$, where $D$ is a quaternion algebra over $K$, with respect to a divisorial set $V$ of discrete valuations of $K$. Then in the above notations, for any $a \in K^{\times}$ the intersection
$$
f_a(\mathscr{B}(C)) \cap H^3(K , \mu_2)_V
$$
is finite.
\end{prop}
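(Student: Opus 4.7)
The plan is to deduce the proposition as a direct application of Theorem \ref{T:101} to the specific geometric situation at hand. Recall from the paragraph preceding the proposition that, under the isomorphism $\kappa_3^{-1} \colon H^3(K , \mu_2) \to k_3(K)$, the image $f_a(\mathscr{B}(C))$ is represented by the elements
$$
\sum_{i = 1}^{2g} (a , h_i , c_i), \qquad c_1, \ldots , c_{2g} \in k^{\times}.
$$
Setting $r = 2g$ and $b_i = h_i$ in the notation of Theorem \ref{T:101}, this exhibits $f_a(\mathscr{B}(C))$ as a subset of the subgroup $\Delta \subset k_3(K)$ consisting of all elements $t(c_1, \ldots , c_{2g}) = \sum_{i=1}^{2g}(a , h_i , c_i)$ with $c_i \in K^{\times}$ (here we use the inclusion $k^{\times} \subset K^{\times}$). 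Consequently
$$
f_a(\mathscr{B}(C)) \cap H^3(K , \mu_2)_V \;\subset\; \Delta_{\mathrm{ur} , V}.
$$

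I would then verify the hypotheses of Theorem \ref{T:101} in this setting. For each nonempty subset $J \subset \{1 , \ldots , 2g\}$, the algebra $D_J = \left( \frac{a , b_J}{K} \right)$ is a quaternion algebra over $K$, so by the standing assumption of the proposition, Condition {\rm (T)} holds for $G_J = \mathrm{GL}_{1 , D_J}$ with respect to $V$. Moreover, since $V$ is a divisorial set of places of the finitely generated field $K$, the discussion in \S\ref{S:F1} shows that $\mathrm{U}(K , V)$ is finitely generated (indeed, it is identified with $R^{\times}$ for a suitable integrally closed, finitely generated $\Z$-subalgebra $R \subset K$ with $K = \mathrm{Frac}(R)$). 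Thus all the hypotheses of Theorem \ref{T:101} are in force, and we conclude that $\Delta_{\mathrm{ur} , V}$ is finite, which immediately yields the finiteness of $f_a(\mathscr{B}(C)) \cap H^3(K , \mu_2)_V$.

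There is no real obstacle to this argument: the proposition is effectively a specialization of Theorem \ref{T:101} in which the elements $b_1, \ldots , b_r$ take the distinguished form $h_1, \ldots , h_{2g}$ coming from chosen $2$-divisible divisors representing the $k$-rational 2-torsion of the Jacobian. The only minor technical point — namely, that one must exclude the finitely many $v \in V$ whose residue field has characteristic 2, where the mod-2 residue map in Milnor $K$-theory is not available — is already absorbed into the finite complement $V \setminus V'$ produced inside the statement of Theorem \ref{T:101} itself, and so does not affect the finiteness conclusion.
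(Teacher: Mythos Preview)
Your proof is correct and follows exactly the route the paper takes: the sentence immediately preceding the proposition says ``Theorem \ref{T:101} applies to the set $\Delta$ of such elements, and we obtain the following,'' and you have simply spelled out that application in detail, including the verification that the blanket hypothesis on Condition {\rm (T)} for all quaternion $\mathrm{GL}_{1,D}$ covers every $G_J$ and that finite generation of $\mathrm{U}(K,V)$ is supplied by the divisorial setup of \S\ref{S:F1}. Your remark about places of residue characteristic~$2$ is a fair clarification of an assumption used implicitly in the proof of Theorem~\ref{T:101}.
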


\vskip2mm

\noindent {\bf Remark 3.12.} (i) A remark similar to Remark 3.9 can be stated also with regard to the use of Condition (T) in Corollary \ref{C:Unram7}
and Proposition \ref{P:102}.

(ii) The description of ${}_2\Br(K)_{V_0}$ that we gave above can be generalized. Namely, let $C$ be a smooth geometrically integral projective curve over a field $k$ such that $C(k) \neq \emptyset$, let $K = k(C)$ be its function field, and let $V_0$ be the set of discrete valuations of $K$ associated with closed points of $C$. Fix an integer $n \geqslant 1$ prime to $\mathrm{char}\: k$ and assume that the group $\mu_n$ of $n$th roots of unity is contained in $k$. Furthermore, let $J$ be the Jacobian of $C$, and assume that the $n$-torsion of $J$ is rational, i.e. ${}_nJ \subset J(k)$. Then for any $\ell \geqslant 2$ the natural map $H^{\ell}(k , \mu_n) \to H^{\ell}(K , \mu_n)$ is injective and there exists a homomorphism
$$
\nu_{n , \ell} \colon {}_nJ \otimes_{\Z} H^{\ell-1}(k , \mu_n) \longrightarrow H^{\ell}(K , \mu_n)
$$
so that
$$
H^{\ell}(K , \mu_n)_{V_0} = H^{\ell}(k , \mu_n) \oplus \mathrm{Im}\: \nu_{n , \ell}.
$$
The construction of $\nu_{n , \ell}$ repeats almost verbatim the construction of $\nu$ above (cf. \cite[\S 6]{CRR2} for the details).

\section{Condition (T) for some reductive groups}\label{S:T}

\begin{thm}\label{T:CondT}
Let $C$ be a smooth geometrically integral curve over a finitely generated field $k$ with function field $K = k(C)$, and let $V$ be the set of discrete valuations of $K$ associated with closed points of $C$. Then Condition {\rm (T)} with respect to $V$ holds for any connected reductive split $K$-group $G$.
\end{thm}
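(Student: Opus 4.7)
The plan is to produce a finite subset $S \subset V$ for which $\mathrm{Cl}(G, K, V \setminus S)$ consists of a single element. The argument has three stages: (i) reduce to an affine open subcurve $C' \subset C$ with trivial Picard group, (ii) show that the class set of a Borel subgroup of $G$ is trivial using its split torus and split unipotent radical, and (iii) pass from triviality for $B$ to triviality for $G$ via a rational version of the Iwasawa decomposition.

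First I would reduce the set of places. Let $\bar{C}$ be the smooth projective completion of $C$, with Jacobian $J$. By the Mordell--Weil theorem over finitely generated fields, $J(k)$ is finitely generated, and the exact sequence $0 \to J(k) \to \mathrm{Pic}(\bar{C}) \to \Z \to 0$ shows that $\mathrm{Pic}(\bar{C})$ is finitely generated; hence so is its quotient $\mathrm{Pic}(C)$ by finitely many point classes. Choose a finite $S \subset V$ such that $C' := C \setminus S$ is affine and $\mathrm{Pic}(C') = 0$, and set $V' := V \setminus S$. Then Lemma \ref{L:Id} gives $\mathrm{Cl}(\mathbb{G}_m, K, V') = \mathrm{Pic}(K, V') = \mathrm{Pic}(C') = 0$.

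Next I would show $\mathrm{Cl}(B, K, V') = 1$ for a Borel $B = T \ltimes U$ with split torus $T$ and split unipotent radical $U$ (provided by Chevalley theory since $G$ is split). The adelic version of the split sequence $1 \to U \to B \to T \to 1$ yields an exact sequence of pointed sets
\[
\mathrm{Cl}(U, K, V') \longrightarrow \mathrm{Cl}(B, K, V') \longrightarrow \mathrm{Cl}(T, K, V'),
\]
the second map being surjective via the Levi section $s \colon T \hookrightarrow B$, and exact in the middle: if $(b_v) \in B(\mathbf{A}(K, V'))$ projects to a trivial class with $\pi(b_v) = \tau_v t$ where $(\tau_v) \in T(\mathbf{A}\!^{\infty}(K, V'))$ and $t \in T(K)$, then the adele $(b_v') := s(\tau_v)^{-1} b_v s(t)^{-1}$ represents the same $B$-class and lies in $U(\mathbf{A}(K, V'))$. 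Now $\mathrm{Cl}(T, K, V') = \mathrm{Pic}(K, V')^{\dim T} = 0$, and induction on $\dim U$ along a composition series with $\mathbb{G}_a$ quotients reduces the vanishing of $\mathrm{Cl}(U, K, V')$ to that of $\mathrm{Cl}(\mathbb{G}_a, K, V') = \mathbf{A}(K, V')/(\mathbf{A}\!^{\infty}(K, V') + K)$, which is $H^1(C', \mathscr{O}_{C'}) = 0$ for the affine curve $C'$.

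Finally I would deduce $\mathrm{Cl}(G, K, V') = 1$ from the rational Iwasawa identity $G(K) = G(\mathscr{O}_{K, v}) \cdot B(K)$ at each $v \in V'$. This follows from the standard local Iwasawa $G(K_v) = G(\mathscr{O}_v) \cdot B(K_v)$ for a split reductive Chevalley group over the complete non-archimedean local field $K_v$, combined with the density of $B(K)$ in $B(K_v)$ (which holds since $B = T \ltimes U$ is split and $K$ is dense in $K_v$): given $g \in G(K)$ and a $K_v$-level factorization $g = kb$, approximating $b$ by $b' \in B(K)$ forces $k' := g(b')^{-1} \in G(K) \cap G(\mathscr{O}_v) = G(\mathscr{O}_{K, v})$. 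Applying this at each place to an adele $(g_v) \in G(\mathbf{A}(K, V'))$ -- taking $k_v = g_v$ and $b_v = 1$ at the almost all $v$ where $g_v$ is already integral -- yields $(b_v) \in B(\mathbf{A}(K, V'))$ with $g_v = k_v b_v$. The previous step now writes $(b_v) = (\eta_v) \cdot b$ with $(\eta_v) \in B(\mathbf{A}\!^{\infty}(K, V'))$ and $b \in B(K)$, whence $g_v = (k_v \eta_v) \cdot b$ exhibits $(g_v)$ as the product of an integral adele and a single principal element, proving $\mathrm{Cl}(G, K, V') = 1$. The main technical points requiring care are the middle exactness of the class set sequence for the split extension $1 \to U \to B \to T \to 1$, and the weak approximation argument that descends the local Iwasawa over each $K_v$ to the rational identity $G(K) = G(\mathscr{O}_{K, v}) B(K)$.
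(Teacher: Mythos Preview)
Your proof is correct, but it takes a genuinely different route from the paper's. Both arguments begin by passing to an affine open $C'$ with trivial Picard group, hence $\vert\mathrm{Cl}(T,K,V')\vert=1$ for the split maximal torus. After that they diverge. The paper works with the subgroup $G(K)^+$ generated by root unipotents: using strong approximation for $\mathbb{G}_a$ (via the Chinese Remainder Theorem on $k[C']$) it shows that the diagonal $G(K)^+$ is dense in $\mathcal{G}:=G(\mathbf{A})\cap\prod_v G(K)^+$, then uses the Bruhat decomposition $G(K)=T(K)G(K)^+$ together with a short open–subgroup argument to conclude that every double coset meets $T(\mathbf{A})$, whence triviality follows from $\vert\mathrm{Cl}(T,K,V')\vert=1$. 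You instead interpose the Borel: first prove $\vert\mathrm{Cl}(B,K,V')\vert=1$ from the split exact sequence $1\to U\to B\to T\to 1$ and the vanishing $\mathrm{Cl}(\mathbb{G}_a,K,V')\simeq H^1(C',\mathscr{O}_{C'})=0$, then invoke the local Iwasawa decomposition $G(K_v)=G(\mathscr{O}_v)B(K_v)$ (valid for a Chevalley scheme over any complete DVR) and descend it to $G(K)=G(\mathscr{O}_{K,v})B(K)$ by approximating the $B$-factor. Your approach is more modular---it cleanly isolates the Borel as the ``class-number-one'' object and then pushes this to $G$ via Iwasawa---while the paper's density argument avoids appealing to the local Iwasawa decomposition altogether and stays entirely within the rational adeles. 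Both rest on the same two ingredients (triviality for $T$ and for $\mathbb{G}_a$), just assembled differently.
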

\begin{proof}
Since the class set $\mathrm{Cl}(\mathbb{G}_m, K, V)$ can be identified with
$\Pic(K , V)$, which is finitely generated because $K$ is finitely generated, we can find a subset $V' \subset V$ with finite complement such that $\mathrm{Cl}(\mathbb{G}_m, K, V')$ reduces to a single element. Now, fix a maximal split $K$-torus $T$ of $G$. Since $T \simeq \mathbb{G}_m^d$, we can reduce $V'$ further (by a finite set) if necessary to insure that $\vert \mathrm{Cl}(T, K, V') \vert = 1$. We will show that then $\vert \mathrm{Cl}(G, K, V') \vert = 1$. For this we will rely on some considerations that use a form of strong approximation.

For any algebraic $K$-group $H$, the group of rational adeles $H(\mathbf{A}(K , V))$ can be topologized as a restricted product (cf. \cite[3.5]{PlRa}); note
that this topology induces the product topology on $H(\mathbf{A}\!^{\infty}(K , V))$. We then say that $H$ has {\it strong approximation} with respect to $V$ if $H(K)$ is dense in $H(\mathbf{A}(K , V))$. First, let $H = \mathbb{G}_a$. Without loss of generality, we can assume that $C$ is affine and then
consider the coordinate ring $k[C]$. For a closed point $P \in C$, the valuation ideal $\mathfrak{m}_P$ in $k[C]$ of the corresponding valuation $v_P$ is maximal, hence for $P_1 \neq P_2$, the ideals $\mathfrak{m}_{P_1}$ and $\mathfrak{m}_{P_2}$ are relatively prime. Applying the Chinese Remainder Theorem, we obtain that the diagonal embedding
$$
k[C] \hookrightarrow \prod_{v \in V} k[C]
$$
is dense when the product is given the product topology (we note that the topology on the factor corresponding to $v = v_P$  is defined by the powers of the ideal $\mathfrak{m}_P$). Then it is easy to show that $K$ is dense in $\mathbf{A}(K , V)$, i.e. $\mathbb{G}_a$ has strong approximation (with respect to $V$ as above if $C$ is affine).

For a root $\alpha$ in the root system $\Phi(G , T)$, we let $U_{\alpha}$ denote the corresponding unipotent root subgroup, and let $G(K)^+$ denote the subgroup of $G(K)$ generated by $U_{\alpha}(K)$ for all $\alpha \in \Phi(G , T)$ (recall that since $G$ is split, we actually have $G(K)^+ = G(K)$ when $G$ is in addition semi-simple and simply connected). Set
$$
\mathcal{G} = G(\mathbf{A}(K , V)) \bigcap \prod_{v \in V} G(K)^+.
$$
Clearly, the union of the products
$$
\mathcal{G}(S) = \prod_{v \in S} G(K)^+  \ \ (\subset \mathcal{G}),
$$
taken over all finite subset $S \subset V$, is dense in $\mathcal{G}$. Since for each $\alpha \in \Phi(G , T)$, the group $U_{\alpha} \simeq \mathbb{G}_a$ has strong approximation with respect to $V$, we conclude that the diagonal embedding $G(K)^+ \hookrightarrow \mathcal{G}$ is dense. Now, it is well-known (and follows, for example, from the Bruhat decomposition) that $G(K) = T(K)G(K)^+$.
Clearly, an arbitrary double coset $$G(\mathbf{A}\!^{\infty}(K , V))\, x \, G(K), \ \ x \in G(\mathbf{A}(K , V))$$ contains an element that lies in one of the finite products $$
G(S) = \prod_{v \in S} G(K) \ \ (\subset G(\mathbf{A}(K , V))),
$$
which implies that it has a representative of the form $tg$ with $t \in
T(\mathbf{A}(K , V))$ and $g \in \mathcal{G}$. But the subgroup $$t^{-1} (\mathcal{G} \cap G(\mathbf{A}\!^{\infty}(K , V))) t = \mathcal{G} \cap (t^{-1} G(\mathbf{A}\!^{\infty}(K , V))t)$$
is open in $\mathcal{G}$, so the density of $G(K)^+$ in $\mathcal{G}$ implies that
$$
\mathcal{G} = t^{-1} (\mathcal{G} \cap G(\mathbf{A}\!^{\infty}(K , V))) t G(K)^+.
$$
Writing $g = g_1g_2$ with $g_1 \in t^{-1} (\mathcal{G} \cap G(\mathbf{A}\!^{\infty}(K , V))) t$ and $g_2 \in G(K)^+$, we obtain that
$$
G(\mathbf{A}\!^{\infty}(K , V)) x G(K) = G(\mathbf{A}\!^{\infty}(K , V)) (tg_1t^{-1})tg_2 G(K) = G(\mathbf{A}\!^{\infty}(K , V)) t G(K).
$$
Thus, $$G(\mathbf{A}(K , V)) = G(\mathbf{A}\!^{\infty}(K , V)) T(\mathbf{A}(K , V)) G(K).$$ Projecting to $G(\mathbf{A}(K , V'))$ and taking into account that $T(\mathbf{A}(K , V')) = T(\mathbf{A}\!^{\infty}(K , V')) T(K)$, we obtain that $\vert \mathrm{Cl}(G, K, V') \vert = 1$, as claimed.
\end{proof}

\vskip2mm

It would interesting to consider some other situations in which Condition (T) holds  for a given reductive group $G$ over a field $K$ which is
equipped with a set $V$ of discrete valuations that satisfies condition (A). As we have seen, it would be useful to have Condition (T) for groups over finitely generated fields with respect to divisorial sets of places of those fields; already the case of groups of the form $G = \mathrm{GL}_{\ell , D}$, where $D$ is a quaternion algebra, would be very interesting. Another important case is where $K = k(C)$ is the function field of a smooth geometrically integral curve over a (finitely generated) field $k$ and $V$ is the set of discrete valuations associated with closed points of $C$.
We would like to point out that Condition (T) does hold for all algebraic tori over finitely generated fields with respect to divisorial sets of places.
\begin{prop}\label{P:CondT}
Let $K$ be a finitely generated field, and let $V^K$ be a divisorial set of places of $K$. Then any $K$-torus $T$ satisfies Condition {\rm (T)}.
\end{prop}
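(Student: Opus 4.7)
The plan is to deduce Condition~(T) for $T$ from the finite generation of $\mathrm{Cl}(T, K, V^K)$ as an abelian group, which in turn will follow from a Shapiro-type resolution of $T$ by a quasi-trivial torus, combined with the finite-generation results of Section~\ref{S:F1}. Since $T$ is commutative, $\mathrm{Cl}(T, K, V^K)$ carries a natural abelian group structure, and for any finite $S \subset V^K$ the projection of adeles induces a surjection
$$\mathrm{Cl}(T, K, V^K) \twoheadrightarrow \mathrm{Cl}(T, K, V^K \setminus S).$$
If $\mathrm{Cl}(T, K, V^K)$ is finitely generated, choosing $S$ to contain every place at which the components of a fixed set of representative adeles for its generators fail to be integral makes the images of all generators trivial, so $\mathrm{Cl}(T, K, V^K \setminus S) = 1$, which is Condition~(T). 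Thus it suffices to prove finite generation of $\mathrm{Cl}(T, K, V^K)$.

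To this end, let $L/K$ be a finite Galois extension splitting $T$, with $\Gamma = \Ga(L/K)$, and let $\hat{T} = X^*(T_L)$ be the ($\Z$-free, finitely generated) character lattice. Choose a surjection of $\Gamma$-modules $\hat{P} \twoheadrightarrow \hat{T}$ from a permutation module $\hat{P} = \bigoplus_{i=1}^r \Z[\Gamma/H_i]$; this exists because $\hat{T}$ is finitely generated over $\Z[\Gamma]$. By duality this yields a closed embedding $T \hookrightarrow P$, where $P = \prod_{i=1}^r R_{L_i/K}(\mathbb{G}_m)$ with $L_i = L^{H_i}$ is a quasi-trivial torus; setting $Q := P/T$ produces an exact sequence of $K$-tori
$$1 \to T \to P \to Q \to 1.$$
Shapiro's lemma, applied place-by-place and globally, identifies $\mathrm{Cl}(P, K, V^K)$ with $\prod_{i=1}^r \Pic(L_i, V^{L_i})$, which is finitely generated by Kahn's theorem as recalled in Section~\ref{S:F1}.

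To finish, I would show that the kernel of the natural map $f \colon \mathrm{Cl}(T, K, V^K) \to \mathrm{Cl}(P, K, V^K)$ is also finitely generated, so that $\mathrm{Cl}(T, K, V^K)$ sits in an extension of two finitely generated groups. Combining the long exact Galois cohomology sequences of $1 \to T \to P \to Q \to 1$ at each completion $K_v$ and over $K$ (where Hilbert~90 together with Shapiro's lemma gives $H^1(K_v, P) = 0 = H^1(K, P)$) with the defining short exact sequences of the class sets, a snake-lemma chase expresses $\ker f$ as a subquotient of $Q(K) \cap Q(\mathbf{A}^{\infty}(K, V^K))$, i.e.\ the rational points of $Q$ that are everywhere locally integral. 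After shrinking $V^K$ by a finite set so that $T$, $P$, and $Q$ all extend to tori over an integrally closed, finitely generated $\Z$-subalgebra $R \subset K$ and become split after base change to the integral closure $R'$ of $R$ in $L$, this intersection is identified with $Q(R) \subset Q(R') \cong (R'^{\times})^{\dim Q}$. The group $R'^{\times}$ is finitely generated by Samuel's theorem, so $\ker f$ is finitely generated, and hence so is $\mathrm{Cl}(T, K, V^K)$. The main obstacle is this final snake-lemma step: while the analogous computation in the full-adelic formalism is classical, in the rational-adelic framework used here one must carefully verify that the cokernels appearing in the diagram chase are indeed controlled by the integral points of $Q$ --- essentially because the rational-adele setup sidesteps weak-approximation issues that would otherwise require more delicate handling.
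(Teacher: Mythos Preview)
Your proof is correct and follows a genuinely different path from the paper's. The paper argues by Galois descent to the splitting field: after shrinking $V^K$ so that $\mathrm{Cl}(T, L, V^L) = 1$ (using finite generation of $\Pic(L, V^L)$), it writes any $t \in T(\mathbf{A}(K, V^K))$ as $t_1 t_2$ with $t_1 \in T(\mathbf{A}^{\infty}(L, V^L))$ and $t_2 \in T(L)$, and the cocycle $\sigma \mapsto \sigma(t_2) t_2^{-1}$ yields an injection of $\mathrm{Cl}(T, K, V^K)$ into $H^1(\mathscr{G}, U)$ with $U = T(\mathbf{A}^{\infty}(L, V^L)) \cap T(L)$; since $U$ embeds in $(R'^{\times})^{\dim T}$ it is finitely generated, so this $H^1$ is finite and Condition~(T) follows. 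Your approach instead resolves $T$ by a quasi-trivial torus and controls $\mathrm{Cl}(T)$ via $\mathrm{Cl}(P)$ (finitely generated by Shapiro and Kahn) together with the integral points $Q(R)$ (finitely generated by Samuel). Both routes rest on the same two finite-generation inputs; the paper's cocycle argument is shorter and, as the authors remark after the proposition, feeds directly into the noncommutative descent of \S\ref{S:Descent}, whereas your resolution argument is closer to the standard toolkit for the arithmetic of tori.

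One small clarification on the step you flag as the main obstacle: the diagram chase does not actually need the local cohomology $H^1(K_v, P)$. In the rational-adele setup the components live in $G(K)$ and $G(\mathscr{O}_{K,v})$, not $G(K_v)$; once you shrink so that $1 \to T \to P \to Q \to 1$ spreads out over $R$ with $T(\mathscr{O}_{K,v}) = P(\mathscr{O}_{K,v}) \cap T(K)$ at every $v$, the assignment $[t] \mapsto \pi(p_K)^{-1}$ (for any factorization $\iota(t) = p_\infty p_K$ in $P(\mathbf{A}^{\infty})P(K)$) already gives a well-defined injection $\ker f \hookrightarrow Q(R)/\pi(P(R))$, with no appeal to surjectivity of $P \to Q$ on local or integral points. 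So the obstacle is milder than you suggest.
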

\begin{proof}
Let $L$ be the splitting field of $T$. As we have already seen, $T$ satisfies Condition (T) over $L$ with respect to $V^L$ (the set of all extensions of places from $V^K$ to $L$). So, reducing $V^K$ by a finite set, we may assume that
\begin{equation}\label{E:Fact}
T(\mathbf{A}(L , V^L)) = T(\mathbf{A}\!^{\infty}(L , V^L)) T(L).
\end{equation}
Using this, we will now construct an injective homomorphism
$$
\lambda \colon T(\mathbf{A}(K , V^K))/T(\mathbf{A}\!^{\infty}(K , V^K)) T(K) \longrightarrow H^1(L/K , U) \ \ \text{where} \ \ U = T(\mathbf{A}\!^{\infty}(L , V^L)) \cap T(L).
$$
Take $t \in T(\mathbf{A}(K , V^K))$, and using (\ref{E:Fact}) write it as $t = t_1t_2$ with $t_1 \in T(\mathbf{A}\!^{\infty}(L , V^L))$ and $t_2 \in T(L)$. Considering the natural action of $\mathscr{G} = \Ga(L/K)$ on $T(\mathbf{A}(L , V^L))$ that leaves $T(\mathbf{A}\!^{\infty}(L , V^L))$ invariant, for every $\sigma \in \mathscr{G}$, we obtain that
\begin{equation}\label{E:ksi}
\xi(\sigma) := \sigma(t_2)t_2^{-1} = \sigma(t_1)^{-1} t_1 \in T(\mathbf{A}\!^{\infty}(L , V^L)) \cap T(L) = U.
\end{equation}
Clearly, $\xi(\sigma)$, $\sigma \in \mathscr{G}$,  defines a cocycle $\xi$ on $\mathscr{G}$ with values in $U$. The class of $\xi$ in $H^1(\mathscr{G} , U) = H^1(L/K , U)$ is easily seen to depend only on $t$ but not on the choice of a particular factorization $t = t_1t_2$, and we will denote it by $\xi_t$. Moreover, the correspondence $t \mapsto \xi_t$ defines a homomorphism
$$
\tilde{\lambda} \colon T(\mathbf{A}(K , V^K)) \longrightarrow H^1(L/K , U).
$$
Now, suppose $t \in \mathrm{Ker}\: \tilde{\lambda}$. Choose a factorization $t = t_1t_2$ as above. Then there exists $u \in U$ such that $\xi(\sigma) = \sigma(u)u^{-1}$ for all $\sigma \in \mathscr{G}$ where $\xi(\sigma)$ is given by (\ref{E:ksi}). Rearranging, we see that
$$
t_1u \in T(\mathbf{A}\!^{\infty}(L , V^L))^{\mathscr{G}} = T(\mathbf{A}\!^{\infty}(K , V^K)) \ \ \text{and} \ \ u^{-1} t_2 \in T(L)^{\mathscr{G}} = T(K),
$$
and consequently $t = (t_1u)(u^{-1}t_2) \in T(\mathbf{A}\!^{\infty}(K , V^K))T(K)$, which proves the inclusion $\mathrm{Ker}\: \tilde{\lambda} \subset T(\mathbf{A}\!^{\infty}(K , V^K)) T(K)$. The opposite inclusion is obvious, so $\tilde{\lambda}$ descends to a required injective homomorphism $\lambda$.

The finite generation of $\mathrm{U}(L , V^L)$ yields the finite generation of $U$, which implies that the group $H^1(L/K , U)$ is finite. Since $\lambda$ is injective, we conclude that the quotient $$T(\mathbf{A}(K , V^K))/T(\mathbf{A}\!^{\infty}(K , V^K)) T(K)$$ is finite, and then Condition (T) follows immediately.
\end{proof}

Using Proposition \ref{P:CondT} and adapting the proof of Theorem \ref{T:CondT}, one can extend this theorem to {\it quasi-split} groups. No more general groups have been considered so far, so we would like indicate how the strategy of reducing the general case to the split case that we employed in the proof of Proposition \ref{P:CondT} can potentially be extended to noncommutative groups. So, let $G$ be an algebraic group defined over a field $K$ equipped with a set $V$ of discrete valuations that satisfies (A). Suppose we are given a finite Galois extension $L/K$ with Galois group $\mathscr{G}$. We then let $V'$ denote the set of all extensions of valuations from $V$ to $L$ (note that $V'$ also satisfies (A)), and let $R' = \bigcap_{w \in V'} \mathscr{O}_{L , w}$. Furthermore, assume that
$$
G(\mathbf{A}(L , V')) = G(\mathbf{A}\!^{\infty}(L , V')) G(L).
$$
Then there exists an injective map
$$
\lambda \colon G(\mathbf{A}\!^{\infty}(K , V)) \backslash G(\mathbf{A}(K , V)) / G(K) \longrightarrow H^1(\mathscr{G} , G(R')).
$$
Note that $\mathrm{Im}\: \lambda$ is contained in the kernel of the global-to-local map
$$
\iota \colon H^1(\mathscr{G} , G(R')) \longrightarrow \prod_{w} H^1(\mathscr{G}(w) , G(\mathscr{O}_{L , w})),
$$
where for each $v \in V$ we pick one extension $w \in V'$ and let $\mathscr{G}(w)$ denote the decomposition group of $w$. Over number fields, all this is due to Rohlfs \cite{Rohl} (cf. \cite[8.4]{PlRa}). Of course, the implementation of this approach for verifying Condition (T) would require the finiteness of $H^1(\mathscr{G} , G(R'))$ or at least of $\mathrm{Ker}\: \iota$. We will present some relevant computations in \S\ref{S:Descent} of the Appendix.

\vskip2mm

It is well-known that the presence of some form of strong approximation is helpful for analyzing double cosets of adele groups -- see the treatment of groups over number fields in \cite[Ch. VIII]{PlRa} and the proof of Theorem \ref{T:CondT} above. While strong approximation is well-understood over global fields (see \cite{R-SA} for a recent survey), not much seems to be known over more general fields. So, we would like to formulate the following

\vskip2mm

\noindent {\bf Question 4.3.} {\it Let $K = k(C)$ be the function field of a smooth geometrically integral curve $C$ over a field $k$,
let $V$ be the set of discrete valuations of $K$ associated with closed points of $C$, and let $G$ be a semi-simple simply connected $K$-group. In what situations does there exist a subset $V' \subset V$ with finite complement such that the closure of $G(K)$ in $G(\mathbf{A}(K , V'))$ is either all of $G(\mathbf{A}(K , V'))$ (i.e. $G$ has strong approximation with respect to $V'$) or at least a ``big'' subgroup thereof? }

\vskip2mm

If $k$ is algebraically closed, then $G$ is necessarily quasi-split, and an easy modification of the argument used in the proof of Theorem \ref{T:CondT} shows that $G$ does have strong approximation with respect to $V' = V$ if $C$ is affine (note that in this case $G(K)^+ = G(K)$). However, the cases where $k$ is a local field or an infinite finitely generated field and the group $G$ is $K$-anisotropic appear to be almost completely open --- to the best of our knowledge, the only available result is due to Yamasaki \cite{Yamasaki}, which gives strong approximation for the group $\mathrm{SL}_{1,D}$, where $D$ is a quaternion division algebra over the field $\R(x)$ of rational functions over $\R$.

\vskip8mm

\centerline{\sc Appendix}

\vskip5mm

\section{Double cosets of the adele group and $\check{\rm C}$ech $\check{H}^1$}\label{S:DC-H1}

The goal of this section is to link double cosets of adele groups with a certain $\check{\rm C}$ech 1-cohomology set by constructing an injective map from the latter to the former (see Proposition \ref{P:DoubleC}), which extends the result of G.~Harder \cite[2.3]{Harder} for group schemes over Dedekind rings. Our construction is explicit and will be used later in the the proof of Proposition \ref{P:Ap3}. Besides, it does not use any results from sheaf theory making our exposition self-contained.

We begin with a brief review of the construction of $\check{\rm C}$ech 1-cohomology set for a sheaf of groups. So, let $\mathcal{G}$ be a sheaf of (noncommutative) groups on a topological space $X$. We begin by briefly recalling the construction of the (pointed) set $\check{H}^1(X , \mathcal{G})$ of $\check{\rm C}$ech 1-cohomology. Let $\mathfrak{U} = \{ U_i \}_{i \in I}$ be an open cover of $X$. We let $\check{Z}^1(\mathfrak{U} , \mathcal{G})$ denote the corresponding set of $\check{\rm C}$ech 1-cocycles, i.e. the families $\{ g_{ij} \}_{i,j \in I}$ with $g_{ij} \in \mathcal{G}(U_i \cap U_j)$ satisfying the cocycle relation
\begin{equation}\label{E:1}
(g_{jk} \vert U_{ijk}) (g_{ik} \vert U_{ijk})^{-1} (g_{ij} \vert U_{ijk}) = 1, \ \ \text{i.e.} \ \ g_{ik} = g_{ij} g_{jk} \ \ \text{on} \ \ U_{ijk} := U_i \cap U_j \cap U_k.
\end{equation}
Two cocycles $\{ g_{ij} \}$ and $\{ g'_{ij} \}$ in $\check{Z}^1(\mathfrak{U} , \mathcal{G})$ are called {\it equivalent} if there exist $s_i \in \mathcal{G}(U_i)$ such that
$$
g'_{ij} = (s_i \vert (U_i \cap U_j)) g_{ij} (s_j \vert (U_i \cap U_j))^{-1} \ \ \text{for all} \ \ i,j \in I.
$$
One easily checks that this is indeed an equivalence relation on $\check{Z}^1(\mathfrak{U} , \mathcal{G})$, and the corresponding set of equivalence classes will be  denoted $\check{H}^1(\mathfrak{U} , \mathcal{G})$; it is a {\it pointed} set whose distinguished element is the class of the trivial cocycle $\{ g_{ij} \}$ defined by $g_{ij} = 1$ for all $i , j \in I$. Furthermore, let $\mathfrak{V} = \{ V_j \}_{j \in J}$ be another open cover of $X$. We write $\mathfrak{U} \leqslant \mathfrak{V}$ if there exists a {\it refinement map} $\tau \colon J \to I$ such that $V_j \subset U_{\tau(j)}$ for all $j \in J$. One then defines a map $\check{Z}^1(\mathfrak{U} , \mathcal{G}) \to \check{Z}^1(\mathfrak{V} , \mathcal{G})$ by sending $g = \{ g_{i_1i_2} \}_{i_1,i_2 \in I} \in \check{Z}^1(\mathfrak{U} , \mathcal{G})$ to $g' = \{ g'_{j_1j_2} \}_{j_1,j_2 \in J} \in \check{Z}^1(\mathfrak{V} , \mathcal{G})$ defined by
$$
g'_{j_1 j_2} = g_{\tau(j_1) \tau(j_2)} \vert (V_{j_1} \cap V_{j_2}).
$$
One easily check that this map is compatible with the equivalence relations on $\check{Z}^1(\mathfrak{U} , \mathcal{G})$ and $\check{Z}^1(\mathfrak{V} , \mathcal{G})$ yielding a map $\check{H}^1(\mathfrak{U} , \mathcal{G}) \to \check{H}^1(\mathfrak{V} , \mathcal{G})$. It turns out that this map is in fact independent of the choice of the refinement map $\tau$. Indeed, let $\sigma \colon J \to I$ be another refinement map. Given a cocycle $g = \{ g_{i_1 i_2} \}_{i_1 , i_2 \in I} \in \check{Z}^1(\mathfrak{U} , \mathcal{G})$,  we  set
$$
s_j = g_{\sigma(j) \tau(j)} \vert V_j \ \ \text{for} \ \ j \in J
$$
(note that $g_{\sigma(j) \tau(j)} \in \mathcal{G}(U_{\sigma(j)} \cap U_{\tau(j)})$ and $U_{\sigma(j)} \cap U_{\tau(j)} \supset V_j$).
Then it follows from the cocycle relation (\ref{E:1}) that
$$
g_{\sigma(j_1) \sigma(j_2)} \vert (V_{j_1} \cap V_{j_2}) = (s_{j_1} \vert (V_{j_1} \cap V_{j_2})) \cdot (g_{\tau(j_1) \tau(j_2)} \vert (V_{j_1} \cap V_{j_2})) \cdot (s_{j_2} \vert (V_{j_1} \cap V_{j_2}))^{-1}.
$$
This means that the cocycles
$$
g' = \{ g'_{j_1 j_2} = g_{\tau(j_1) \tau(j_2)} \vert (V_{j_1} \cap V_{j_2})\} \ \ \text{and} \ \ g'' = \{ g''_{j_1 j_2}
= g_{\sigma(j_1) \sigma(j_2)} \vert (V_{j_1} \cap V_{j_2})\} \ \in \ \check{Z}^1(\mathfrak{V} , \mathcal{G}),
$$
obtained using the refinement maps $\tau$ and $\sigma$, respectively, are equivalent, hence define the same element in $\check{H}^1(\mathfrak{V} , \mathcal{G})$. Thus, for any two open covers $\mathfrak{U}$ and $\mathfrak{V}$ of $X$ such that $\mathfrak{U} \leqslant \mathfrak{V}$, there is a canonical map of pointed sets
\begin{equation}\label{E:ref}
\tau^{\mathfrak{V}}_{\mathfrak{U}} \colon \check{H}^1(\mathfrak{U} , \mathcal{G}) \longrightarrow \check{H}^1(\mathfrak{V} , \mathcal{G}).
\end{equation}
Moreover, the sets $\check{H}^1(\mathfrak{U} , \mathcal{G})$ together with the maps $\tau^{\mathfrak{V}}_{\mathfrak{U}}$ for $\mathfrak{U} \leqslant \mathfrak{V}$ form a direct system over the partially ordered directed set of all covers, and one defines the $\check{\rm C}$ech cohomology set of $X$ with coefficients in $\mathcal{G}$ by
$$
\check{H}^1(X , \mathcal{G}) = \lim_{\longrightarrow} \check{H}^1(\mathfrak{U} , \mathcal{G}).
$$

\vskip2mm

We will now specialize to the following situation. Let $R$ be a noetherian integral domain that is integrally closed in its field of fractions\footnotemark \ $K$,
and let $G$ be an affine flat group scheme over $R$. We can consider a sheaf of groups $\mathcal{G}$ on $X = \mathrm{Spec}\: R$ given by
$$
\mathcal{G}(U) := G(\mathcal{O}_X(U)) \ \ \text{for any (Zariski) open} \ \ U \subset X,
$$
where $\mathcal{O}_X$ is the structure sheaf of $X$. To emphasize the role of $G$, we will denote the corresponding $\check{\rm C}$ech cohomology set by $\check{H}^1(X , G)$.

\footnotetext{In particular, $R$ is a Krull domain, cf. \cite[Ch. VII, \S1, n$^{\circ}$ 3]{Bour-CA}}

We let $\mathrm{P}$ denote the set of height one primes of $R$. For each $\mathfrak{p} \in \mathrm{P}$, we let $v_{\mathfrak{p}}$ denote the corresponding discrete valuation of $K$ and set $V = \{ v_{\mathfrak{p}} \, \vert \, \mathfrak{p} \in \mathrm{P} \}$. Conversely, for $v \in V$ we let $\mathfrak{p}_v \in
\mathrm{P}$ denote the corresponding prime, and let $\mathscr{O}_{K , v}$ be the valuation ring of $v$ in $K$ (we note that $\mathscr{O}_{K , v_{\mathfrak{p}}}$ coincides with the localization $R_{\mathfrak{p}}$). We note that $V$ satisfies condition (A) - see \cite[Ch. VII, \S1, Theorem 4]{Bour-CA}.
It is also well-known that $R = \bigcap_{v \in V} \mathscr{O}_{K , v}$ (cf. \cite[Ch. II, Proposition 6.3A]{Hart}). More generally, for any nonzero $a \in R$ and the corresponding localization $R_a$, we have
\begin{equation}\label{E:102}
R_a = \bigcap_{v \in V \setminus V(a)} \mathscr{O}_{K,v} \ \ \text{where} \ \ V(a) := \{ v \in V \: \vert \: v(a) \neq 0 \}.
\end{equation}
Throughout the remainder of this section, $K$, $R$ and $V$ will remain fixed, so in order to simply our notations, we will write $\mathscr{O}_v$ instead of $\mathscr{O}_{K , v}$. Furthermore, we will denote the  group of rational adeles $G(\mathbf{A}(K,V))$ (cf. \S\ref{S:G2}) simply by $G(\mathbf{A})$; its subgroup of integral adeles will be denoted $G(\mathbf{A}\!^{\infty})$,  i.e. $G(\mathbf{A}\!^{\infty}) = \prod_{v \in V} G(\mathscr{O}_v)$. Our goal is to prove the following.


\begin{prop}\label{P:DoubleC}
There exists a natural injective map $$f \colon \check{H}^1(X , G) \longrightarrow G(\mathbf{A}\!^{\infty}) \backslash G(\mathbf{A}) / G(K).$$
\end{prop}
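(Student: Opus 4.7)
My plan is to construct $f$ explicitly on Čech cocycles over principal open covers $\mathfrak{U} = \{D(a_i)\}_{i \in I}$ of $X = \mathrm{Spec}\: R$, which form a cofinal subsystem of all open covers. Given a cocycle $\{g_{ij}\} \in \check{Z}^1(\mathfrak{U}, G)$ (so $g_{ij} \in G(R_{a_i a_j})$), I fix an arbitrary base index $i_0 \in I$ and, for each $v \in V$, choose some index $i(v) \in I$ with $v(a_{i(v)}) = 0$ (which exists because the $a_i$ generate the unit ideal of $R$). I then set $t_v := g_{i(v), i_0} \in G(R_{a_{i(v)} a_{i_0}}) \subset G(K)$. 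Since $V(a_{i_0})$ is finite by condition (A), one has $t_v \in G(\mathscr{O}_v)$ for all but finitely many $v$, so $(t_v)_{v \in V}$ is an element of $G(\mathbf{A})$; I declare $f([\{g_{ij}\}])$ to be the class of this adele in $G(\mathbf{A}\!^{\infty}) \backslash G(\mathbf{A}) / G(K)$.

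The next step is to verify that this class depends only on the cohomology class in $\check{H}^1(X, G)$. Independence from $i(v)$ follows from the cocycle identity $g_{i(v), i_0} = g_{i(v), j(v)} \cdot g_{j(v), i_0}$ with $g_{i(v), j(v)} \in G(R_{a_{i(v)} a_{j(v)}}) \subset G(\mathscr{O}_v)$; independence from $i_0$ follows from $g_{i(v), i_1} = g_{i(v), i_0} \cdot g_{i_0, i_1}$, which alters the adele only by the constant right factor $g_{i_0, i_1} \in G(K)$; and a coboundary transformation $g'_{ij} = s_i g_{ij} s_j^{-1}$ with $s_i \in G(R_{a_i})$ changes $(t_v)$ by left multiplication by the integral adele $(s_{i(v)})_v$ and right multiplication by $s_{i_0}^{-1} \in G(K)$. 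Finally, if $\mathfrak{U} \leqslant \mathfrak{V}$ via a refinement map $\tau$, then taking $i(v) = \tau(\alpha(v))$ for $\alpha(v)$ chosen on the $\mathfrak{V}$-side produces literally the same adele, so $f$ is compatible with refinement and descends to the direct limit.

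For injectivity, suppose $\{g_{ij}\}$ is a cocycle on $\mathfrak{U}$ whose image is trivial, so that $t_v = g_{i(v), i_0} = h_v k$ for all $v$, with $(h_v) \in G(\mathbf{A}\!^{\infty})$ and $k \in G(K)$. I would set $s_i := k \cdot g_{i, i_0}^{-1} \in G(K)$ and verify that it lies in $G(R_{a_i})$. For any $v$ with $v(a_i) = 0$, the cocycle identity gives
\[
g_{i, i_0} k^{-1} = g_{i, i(v)} \cdot g_{i(v), i_0} \cdot k^{-1} = g_{i, i(v)} \cdot h_v,
\]
where $g_{i, i(v)} \in G(R_{a_i a_{i(v)}}) \subset G(\mathscr{O}_v)$ and $h_v \in G(\mathscr{O}_v)$; hence $g_{i, i_0} k^{-1} \in G(\mathscr{O}_v)$ and so $s_i \in G(\mathscr{O}_v)$. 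Combining this with the identity $R_{a_i} = \bigcap_{v(a_i) = 0} \mathscr{O}_v$ from (\ref{E:102}) and the affineness of $G$---which yields $G(R_{a_i}) = \bigcap_{v(a_i) = 0} G(\mathscr{O}_v)$ as subsets of $G(K)$---I conclude $s_i \in G(R_{a_i})$. A direct computation using the cocycle identity $g_{ij} = g_{i, i_0} g_{i_0, j}$ then gives $s_i^{-1} s_j = g_{ij}$, whence $s_i g_{ij} s_j^{-1} = 1$, so $\{g_{ij}\}$ is cohomologous to the trivial cocycle already on $\mathfrak{U}$.

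The main obstacle is the step of extending $s_i$, a priori only a $K$-point of $G$, to an $R_{a_i}$-point. This is where the hypotheses of the proposition---that $R$ is an integrally closed noetherian domain (so that the valuative description (\ref{E:102}) is available) and that $G$ is an affine group scheme (so that integrality can be checked valuation by valuation on the coordinate ring)---enter in an essential way. The other ingredients---the explicit construction of the adele, independence from auxiliary choices, and compatibility with refinements---are systematic manipulations of the cocycle relation and require no substantial new input.
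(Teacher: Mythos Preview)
Your construction and the well-definedness checks are essentially identical to the paper's proof: the paper also fixes a base index $i_0$, chooses a map $\phi \colon V \to I$ (your $v \mapsto i(v)$), and defines the adele componentwise as $g_{\phi(v),\, i_0}$ (up to an auxiliary choice of $h_{i_0} \in G(K)$, which you implicitly take to be $1$). Your decision to work only with principal open covers from the outset is a harmless simplification, since they are cofinal and the paper itself restricts to them for the injectivity step (its Lemma~\ref{L:princ}).

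There is, however, a genuine gap in your injectivity argument: you only show that the preimage of the \emph{distinguished} double coset is the trivial cohomology class. Since $\check{H}^1(X,G)$ and $G(\mathbf{A}\!^{\infty}) \backslash G(\mathbf{A}) / G(K)$ are merely pointed sets carrying no group structure, trivial kernel does not imply injectivity, and no twisting argument is supplied to bridge the gap. The paper's Lemma~\ref{L:princ} addresses this by starting with two \emph{arbitrary} cocycles $g, g'$ on the same principal cover whose images coincide, writing $g'_{i(v),i_0} = h_v \, g_{i(v),i_0} \, k$ with $(h_v) \in G(\mathbf{A}\!^{\infty})$ and $k \in G(K)$, and then checking that $s_i := g'_{i,i_0}\, k^{-1}\, g_{i,i_0}^{-1}$ lies in $G(R_{a_i})$ and realizes $g' \sim g$. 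Your valuation-by-valuation verification via $R_{a_i} = \bigcap_{v(a_i)=0} \mathscr{O}_v$ adapts to this general case with no new ideas, but as written the proof is incomplete.
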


This proposition suggests the following analogue of Condition (T) for $\check{H}^1$, which is likely to be true in all situations.

\addtocounter{thm}{1}

\vskip2mm

\noindent {\bf Conjecture 5.2.} {\it Let $R$ be a finitely generated $\Z$-algebra which is an integral domain integrally closed in its field of fractions,
and $X = \mathrm{Spec}\: R$. Given an affine reductive group scheme $G$ of finite type over $R$, there exists an open subset  $U \subset X$ such that the restriction map $$\check{H}^1(X , G) \to \check{H}^1(U , G)$$ is trivial (or even $\check{H}^1(U , G) = 1$).}

\vskip2mm

We begin by first fixing an open cover $\mathfrak{U} = \{ U_i \}_{i \in I}$ and constructing a natural map
$$
f_{\mathfrak{U}} \colon \check{H}^1(\mathfrak{U} , G) \longrightarrow G(\mathbf{A}\!^{\infty}) \backslash G(\mathbf{A}) / G(K).
$$
As long as the cover $\mathfrak{U}$ remains fixed, we will, for simplicity, suppress $\mathfrak{U}$ in our notations (such as $f_{\mathfrak{U}}$), and will reinstate the subscript $\mathfrak{U}$ once we begin working with different covers. Let $g = \{ g_{ij} \} \in \check{Z}^1(\mathfrak{U} , G)$ be a $\check{\rm C}$ech 1-cocycle. Fix $i_0 \in I$ and $h_{i_0} \in G(K)$, and for all $i \in I$ set
$$
h_i = g_{ii_0} h_{i_0}.
$$
It follows from (\ref{E:1}) that  $g_{i_0i_0} = 1$, hence this does not alter $h_0$, and moreover we have $g_{ij} = h_i h_j^{-1}$ for all $i , j \in I$, meaning that the family $\{ h_i \}$ provides a trivialization of $g$ as a cocycle with values in the constant sheaf associated with $G(K)$. 

Since $X$ is quasi-compact, we can find a finite subset $I_0 \subset I$ so that the sets $U_i$ for $i \in I_0$ already cover $X$. Pick a finite subset $V_0 \subset V$ such that for any $v \in V \setminus V_0$, the elements $h_i$ $(i \in I_0)$ all belong to $G(\mathscr{O}_v)$. Take an arbitrary $v \in V \setminus V_0$, and let $i \in I$ be such that $\mathfrak{p}_v \in U_i$. There exists $j \in I_0$ for which $\mathfrak{p}_v \in U_j$. We have $g_{ij} \in G(\mathcal{O}_X(U_i \cap U_j)) \subset G(\mathscr{O}_v)$, so $h_i = g_{ij} h_j \in G(\mathscr{O}_v)$. Thus, $h_i \in G(\mathscr{O}_v)$ for any $v \in V \setminus V_0$ such that $\mathfrak{p}_v \in U_i$. 

Now, let $\phi \colon V \to I$ be any map with the property $\mathfrak{p}_v \in U_{\phi(v)}$ for all $v \in V$. It follows from the above discussion that 
there is an adele $\tilde{f}(g; i_0, h_{i_0}, \phi) \in G(\mathbf{A})$  with the  components
$$
\tilde{f}(g; i_0, h_{i_0}, \phi)_v = h_{\phi(v)} \ \ \text{for} \ \ v \in V.
$$
We then set 
$$
f(g; i_0, h_{i_0}, \phi) = G(\mathbf{A}\!^{\infty}) \tilde{f}(g; i_0, h_{i_0}, \phi) G(K) \in G(\mathbf{A}\!^{\infty}) \backslash G(\mathbf{A}) / G(K).
$$
Our goal is to show that the double coset $f(g; i_0, h_{i_0}, \phi)$ depends only on the cocycle $g$.

\vskip2mm

\noindent 1$^{\circ}$. $f(g; i_0, h_{i_0}, \phi)$ {\it is independent of} $\phi$. Indeed, let $v \in V$ and let $i , i' \in I$ be such that $\mathfrak{p}_v \in U_i$ and $U_{i'}$. It follows from (\ref{E:1}) that $g_{ii_0} = g_{ii'} g_{i'i_0}$, so multiplying this relation by $h_{i_0}$, we see that $$h_{i} = g_{ii'}h_{i'} \ \  \text{with} \ \  g_{ii'} \in G(\mathcal{O}_X(U_i \cap U_{i'})) \subset G(\mathscr{O}_v).$$ So, for any two maps $\phi, \phi'$ as above, the elements  $\tilde{f}(g; i_0, h_{i_0}, \phi)$ and $\tilde{f}(g; i_0, h_{i_0}, \phi')$ lie in the same right coset modulo $G(\mathbf{A}\!^{\infty})$, hence our claim.

\vskip2mm

So, we will fix some map $\phi$ as above for the rest of the construction of $f_{\mathfrak{U}}$.

\vskip2mm

\noindent 2$^{\circ}$. $f(g; i_0, h_{i_0}, \phi)$ {\it is independent of} $h_{i_0}$. Indeed, take some other $h'_{i_0} \in G(K)$, and set $t = h_{i_0}^{-1} h'_{i_0}$. Then clearly  $h'_i = g_{ii_0} h'_{i_0}$ satisfies $h'_i = h_i t$, implying that
$$
\tilde{f}(g; i_0, h'_{i_0}, \phi) = \tilde{f}(g; i_0, h_{i_0}, \phi) t.
$$
So, $\tilde{f}(g; i_0, h'_{i_0}, \phi)$ and $\tilde{f}(g; i_0, h_{i_0}, \phi)$ lie in the same left coset modulo $G(K)$, and our claim follows.

\vskip2mm

\noindent 3$^{\circ}$. $f(g; i_0, h_{i_0}, \phi)$ {\it is independent of} $i_0$. Let $i'_0 \in I$ be another index. It follows from the relation $g_{ii'_0} = g_{ii_0} g_{i_0 i'_0}$ that
$$
\tilde{f}(g; i_0, h_{i_0}, \phi) = \tilde{f}(g; i'_0, g_{i_0 i'_0}^{-1} h_{i_0}, \phi),
$$
and consequently, $f(g; i_0, h_0, \phi) = f(g; i'_0, h'_{i'_0}, \phi)$ for any choice of $h'_{i'_0} \in G(K)$ in view of item 2$^{\circ}$.

\vskip2mm

So, we will write $f(g)$ instead of $f(g; i_0, h_0, \phi)$. Next, we show that if $g' \in \check{Z}^1(\mathfrak{U} , G)$ is equivalent to $g$ then $f(g') = f(g)$. The fact that the cocycles are equivalent means that there exist $s_i \in G(\mathcal{O}_X(U_i))$ for $i \in I$ such that
\begin{equation}\label{E:5}
g'_{ij} = s_i g_{ij} s_j^{-1} \ \ \text{for all} \ \ i , j \in I.
\end{equation}
Set $h'_{i_0} = s_{i_0} h_{i_0}$. Then
$$
h'_i := g'_{i i_0} h'_{i_0} = s_i g_{i i_0} s_{i_0}^{-1} (s_{i_0} h_{i_0}) = s_i h_i \ \ \text{for all} \ \ i \in I.
$$
Then for every $v \in V$ we have
$$
h'_{\phi(v)} = s_{\phi(v)} h_{\phi(v)} \in G(\mathcal{O}_X(U_{\phi(v)})) h_{\phi(v)} \subset G(\mathcal{O}_v) h_{\phi(v)}.
$$
It follows that $\tilde{f}(g; i_0, h_{i_0}, \phi)_v = h_{\phi(v)}$ and $\tilde{f}(g'; i_0, h'_{i_0}, \phi)_v = h'_{\phi(v)}$ lie
in the same right coset modulo $G(\mathcal{O}_v)$. Thus, $\tilde{f}(g; i_0, h_{i_0}, \phi)$ and $\tilde{f}(g; i_0, h'_{i_0}, \phi)$
lie in the same right coset modulo $G(\mathbf{A}\!^{\infty})$, hence $f(g') = f(g)$.

\vskip2mm

The above discussion implies that $f$ descends to a well-defined map
$$
f_{\mathfrak{U}} \colon \check{H}^1(\mathfrak{U} , G) \longrightarrow G(\mathbf{A}\!^{\infty}) \backslash G(\mathbf{A}) /G(K).
$$
\begin{lemma}\label{L:princ}
If all the $U_i$'s in $\mathfrak{U}$ are principal open sets, then $f_{\mathfrak{U}}$ is injective.
\end{lemma}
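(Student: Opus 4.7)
My plan is to start with two cocycles $g, g' \in \check{Z}^1(\mathfrak{U} , G)$ satisfying $f_{\mathfrak{U}}([g]) = f_{\mathfrak{U}}([g'])$ and reverse-engineer an equivalence between them. Choose $i_0 \in I$, elements $h_{i_0}, h'_{i_0} \in G(K)$, and a map $\phi \colon V \to I$, and form the adeles $\tilde{f}(g; i_0, h_{i_0}, \phi)$ and $\tilde{f}(g'; i_0, h'_{i_0}, \phi)$ as in the construction. The assumption gives $\tilde{f}(g'; i_0, h'_{i_0}, \phi) = \alpha \, \tilde{f}(g; i_0, h_{i_0}, \phi) \, t$ for some $\alpha \in G(\mathbf{A}\!^{\infty})$ and $t \in G(K)$. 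Applying item $2^{\circ}$ (replace $h'_{i_0}$ by $h'_{i_0} t^{-1}$), we may absorb $t$ and assume $t = 1$, so that componentwise $h'_{\phi(v)} = \alpha_v h_{\phi(v)}$ with $\alpha_v \in G(\mathscr{O}_v)$ for every $v \in V$.

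I would then define the candidate coboundary by $s_i := h'_i h_i^{-1} = g'_{ii_0} g_{ii_0}^{-1} \in G(K)$ for each $i \in I$. The cocycle identity (\ref{E:1}) gives $g_{ij} = g_{ii_0} g_{ji_0}^{-1}$ and the analogous relation for $g'$, from which a direct computation yields the required transformation law $g'_{ij} = s_i \, g_{ij} \, s_j^{-1}$. Consequently, to exhibit $g$ and $g'$ as equivalent cocycles on $\mathfrak{U}$, it is enough to show that $s_i \in G(\mathcal{O}_X(U_i))$ for every $i \in I$.

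The crucial step is to upgrade the pointwise information $s_{\phi(v)} \in G(\mathscr{O}_v)$ — which so far holds only at the single index $\phi(v)$ — into $s_i \in G(\mathscr{O}_v)$ for \emph{every} $i \in I$ with $\mathfrak{p}_v \in U_i$. This is where the independence of $\phi$ established in item $1^{\circ}$ enters: given such an $i$ and $v$, choose any map $\phi' \colon V \to I$ with $\phi'(v) = i$ and $\mathfrak{p}_{v'} \in U_{\phi'(v')}$ for all $v'$. By item $1^{\circ}$, the adeles $\tilde{f}(g ; \phi)$ and $\tilde{f}(g ; \phi')$ differ by an element of $G(\mathbf{A}\!^{\infty})$, and similarly for $g'$ (with the already-fixed $h'_{i_0}$); conjugating the relation $\tilde{f}(g' ; \phi) = \alpha \, \tilde{f}(g ; \phi)$ by these elements yields $\tilde{f}(g' ; \phi') = \alpha' \, \tilde{f}(g ; \phi')$ for some $\alpha' \in G(\mathbf{A}\!^{\infty})$. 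Reading off the $v$-component gives $s_i = h'_i h_i^{-1} = \alpha'_v \in G(\mathscr{O}_v)$, as needed.

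Finally, the hypothesis that each $U_i$ is principal lets me write $U_i = D(a_i)$ for some $a_i \in R \setminus \{0\}$, so $\mathcal{O}_X(U_i) = R_{a_i}$, and the condition $\mathfrak{p}_v \in U_i$ becomes $v \notin V(a_i)$. Writing $G = \mathrm{Spec}(A)$ as an affine scheme, $s_i$ corresponds to an $R$-algebra homomorphism $A \to K$ whose image lies in $\mathscr{O}_v$ for every $v \notin V(a_i)$. Invoking (\ref{E:102}), the image lies in $\bigcap_{v \notin V(a_i)} \mathscr{O}_v = R_{a_i}$, so $s_i \in G(R_{a_i}) = G(\mathcal{O}_X(U_i))$, which concludes the equivalence of $g$ and $g'$ and hence the injectivity of $f_{\mathfrak{U}}$. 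The main obstacle I anticipate is purely bookkeeping: one must take care that varying $\phi$ does not force a readjustment of $h'_{i_0}$ that would disturb the normalization $t = 1$, but items $1^{\circ}$ and $2^{\circ}$ are precisely what make the two degrees of freedom — the choice of $\phi$ and the choice of $h'_{i_0}$ — independent, so the argument goes through.
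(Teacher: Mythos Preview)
Your proof is correct and follows essentially the same strategy as the paper's: absorb the $G(K)$-factor $t$ into the choice of $h'_{i_0}$ (the paper instead absorbs it into $h_{i_0}$, but this is immaterial), define $s_i = h'_i h_i^{-1}$, and then use the freedom in $\phi$ together with the identity $\mathcal{O}_X(U_i) = \bigcap_{v \in V_i} \mathscr{O}_v$ for principal $U_i$ to verify $s_i \in G(\mathcal{O}_X(U_i))$. The only cosmetic difference is that the paper fixes a single $\phi_i$ sending all of $V_i$ to $i$ at once, whereas you vary $\phi'$ one valuation at a time; also, your word ``conjugating'' is slightly loose (the new $\alpha'$ is $\beta' \alpha \beta^{-1}$ with possibly $\beta \neq \beta'$), but the conclusion $\alpha' \in G(\mathbf{A}\!^{\infty})$ is unaffected.
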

\begin{proof}
Suppose $U_i$ is the principal open subset of $X$ corresponding to $a_i \in R$, i.e. $U_i = \mathrm{Spec}\: R_{a_i}$. Applying (\ref{E:102}), we see that
\begin{equation}\label{E:10}
\mathcal{O}_X(U_i) = \bigcap_{v \in V_i} \mathscr{O}_{v},
\end{equation}
where $V_i$ consists of those $v \in V$ for which $\mathfrak{p}_v \in U_i$.

Suppose now $g , g' \in \check{Z}^1(\mathfrak{U} , G)$ are such that $f(g') = f(g)$. Fix $\phi$, $i_0$ and $h_{i_0}$ as above. Then there exist $s \in G(\mathbf{A}\!^{\infty})$ and $t \in G(K)$ such that
\begin{equation}\label{E:6}
\tilde{f}(g'; i_0, h_{i_0}, \phi) = s \tilde{f}(g; i_0, h_{i_0}, \phi) t = s \tilde{f}(g; i_0, h_{i_0}t, \phi)
\end{equation}
(the last equality follows from the computations in item 2$^{\circ}$ above). Setting
$$
h_i = g_{i i_0} h_{i_0}, \ \ h'_{i_0} = h_{i_0} \ \ \text{and} \ \ h'_i = g'_{ii_0} h'_{i_0} \ \ \text{for} \ \ i \in I,
$$
we will have
$$
g_{ij} = (h_it)(h_jt)^{-1} \ \ \text{and} \ \ g'_{ij} = h'_i (h'_j)^{-1}.
$$
So, to show that $g$ and $g'$ define the same element of $\check{H}^1(\mathfrak{U} , G)$, i.e. there exist $s_i \in G(\mathcal{O}_X(U_i))$ satisfying (\ref{E:5}), it suffices to show that
\begin{equation}\label{E:7}
h'_i (h_i t)^{-1} \in G(\mathcal{O}_X(U_i)) \ \ \text{for all} \ \ i \in I.
\end{equation}
Fix an $i \in I$, and let $\phi_i \colon V \to I$ be a map such that $\mathfrak{p}_v \in U_{\phi(v)}$ for all $v \in V$ and $\phi_i(V_i) = \{ i \}$. The argument in item 2$^{\circ}$ above shows that $\tilde{f}(g; i_0, h_{i_0}t, \phi)$ and $\tilde{f}(g; i_0, h_{i_0}t, \phi_i)$ (resp., $\tilde{f}(g'; i_0, h_{i_0}, \phi)$ and $\tilde{f}(g; i_0, h_{i_0}, \phi_i)$) lie in the same right coset modulo $G(\mathbf{A}\!^{\infty})$, so (\ref{E:6}) implies that
\begin{equation}\label{E:8}
\tilde{f}(g'; i_0, h_{i_0}, \phi_i) = s_i \tilde{f}(g; i_0, h_{i_0}t, \phi_i) \ \ \text{with} \ \ s_i \in G(\mathbf{A}\!^{\infty}).
\end{equation}
Since for $v \in V_i$ we have
$$
\tilde{f}(g'; i_0, h_{i_0}, \phi_i)_v = h'_i \ \ \text{and} \ \ \tilde{f}(g; i_0, h_{i_0}t, \phi_i) = h_i t,
$$
(\ref{E:7}) follows from (\ref{E:8}) and (\ref{E:10}).
\end{proof}

\vskip2mm

Let now $\mathfrak{V} = \{ V_j \}_{j \in J}$ be a refinement of the cover $\mathfrak{U} = \{ U_i \}_{i \in I}$. Then there exists a canonical map (\ref{E:ref}), and we claim that
\begin{equation}\label{E:200}
f_{\mathfrak{U}} = f_{\mathfrak{V}} \circ \tau^{\mathfrak{V}}_{\mathfrak{U}}.
\end{equation}
For this we need to fix a refinement map $\tau \colon J \to I$ such that $V_j \subset U_{\tau(j)}$ for all $j \in J$, and recall that $\tau^{\mathfrak{V}}_{\mathfrak{U}}$ is defined by sending the class of  $g = (g_{i_1 i_2}) \in \check{Z}^1(\mathfrak{U} , G)$ to the class of $g' = (g'_{j_1 j_2}) \in \check{Z}^1(\mathfrak{V} , G)$ defined by
$$
g'_{j_1 j_2} := g_{\tau(j_1) \tau(j_2)}.
$$
Fix $j_0 \in J$ and set $i_0 = \tau(j_0)$. Furthermore, fix $h'_{j_0} = h_{i_0} \in G(K)$, pick an arbitrary function $\phi' \colon V \to J$ as above, and set $\phi = \tau \circ \phi'$. To prove that $f_{\mathfrak{U}}(g) = f_{\mathfrak{V}}(g')$, we consider $h_i = g_{i i_0} h_{i_0}$ for $i \in I$ and
$$
h'_j = g'_{j j_0} h'_{j_0} = g_{\tau(j) \tau(j_0)} h_{\tau(j_0)} = h_{\tau(j)} \ \ \text{for} \ \ j \in J.
$$
Since  $\phi(v) = \tau(\phi'(v))$ for all $v \in V$, we obtain that  $h'_{\phi'(v)} = h_{\phi(v)}$, and consequently
$$
\tilde{f}_{\mathfrak{V}}(g'; j_0, h'_{j_0}, \phi')_v = h'_{\phi'(v)} = h_{\phi(v)} = \tilde{f}_{\mathfrak{U}}(g; i_0, h_{i_0}, \phi)_v.
$$
So,
$$
\tilde{f}_{\mathfrak{V}}(g'; j_0, h'_{j_0}, \phi') = \tilde{f}_{\mathfrak{U}}(g; i_0, h_{i_0}, \phi),
$$
and therefore $f_{\mathfrak{V}}(g') = f_{\mathfrak{U}}(g)$. This proves (\ref{E:200}), which implies that the maps $f_{\mathfrak{U}}$ are compatible,   hence give rise to a required map
$$
f \colon \check{H}^1(X , G) \longrightarrow G(\mathbf{A}\!^{\infty}) \backslash G(\mathbf{A}) / G(K).
$$
Since for every open cover $\mathfrak{U}$ there exists a cover $\mathfrak{V}$ consisting of principle open sets and such that $\mathfrak{U} \leqslant \mathfrak{V}$, we conclude from Lemma \ref{L:princ} that $f$ is injective, completing the proof of Proposition \ref{P:DoubleC}.

\section{The case $G = \mathrm{GL}_n$}\label{S:GLn}

We will continue using the notations introduced in the previous section. In particular, $R$ will denote a noetherian integral domain integrally closed in its field of fractions $K$. We set $X = \mathrm{Spec}\: R$, and let $\mathrm{P}$ be the set of height one primes of $R$ and $V$ the associated set of discrete valuations of $K$.
The group of rational adeles $G(\mathbf{A}(K , V))$ will be denoted simply by $G(\mathbf{A})$. In fact, in this section we will focus exclusively on the case $G = \mathrm{GL}_n$. We begin by reviewing the known facts that in this case the sets $\check{H}^1(X , G)$ and $G(K) \backslash G(\mathbf{A}) / G(\mathbf{A}\!^{\infty})$ can be interpreted in terms of finitely generated {\it projective} and {\it reflexive} $R$-modules.

For $n \geqslant 1$, let $\texttt{Proj}_n(R)$ denote the set of isomorphism classes of finitely generated projective $R$-modules of rank $n$.

\begin{prop}\label{P:Ap1}
{\rm (cf. \cite[\S 11]{Milne})} For $G = \mathrm{GL}_n$, there exists a natural bijection {\rm $$\alpha \colon \check{H}^1(X , G) \to  \text{\texttt{Proj}}_n(R).$$}
\end{prop}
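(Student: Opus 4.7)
The plan is to realize a $\check{\rm C}$ech 1-cocycle with values in $\mathrm{GL}_n$ as the transition data for gluing trivial rank $n$ free modules on an open cover, and conversely to produce a cocycle from any finitely generated projective $R$-module of rank $n$ by trivializing it on a cover by principal open sets. This is the standard identification of $\check{H}^1(X,\mathrm{GL}_n)$ with the set of isomorphism classes of rank $n$ locally free sheaves on $X$, combined with the fact that over an affine scheme locally free sheaves of finite rank correspond to finitely generated projective modules.

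\textbf{Construction of $\alpha$.} Fix an open cover $\mathfrak{U}=\{U_i\}_{i\in I}$ of $X$ and a cocycle $g=\{g_{ij}\}\in \check{Z}^1(\mathfrak{U},G)$, where $g_{ij}\in \mathrm{GL}_n(\mathcal{O}_X(U_i\cap U_j))$. I would define
$$
P_g \;=\; \bigl\{\,(x_i)\in \textstyle\prod_{i\in I}\mathcal{O}_X(U_i)^n \;\big\vert\; x_i\vert(U_i\cap U_j)=g_{ij}\bigl(x_j\vert(U_i\cap U_j)\bigr) \ \text{for all } i,j\,\bigr\}.
$$
After restricting to a finite subcover by principal opens $U_i=D(a_i)$ (which exists since $X$ is quasi-compact and principal opens form a basis), the sheaf axiom identifies $P_g$ with an $R$-module that becomes free of rank $n$ after localization at each $a_i$. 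Hence $P_g$ is finitely generated projective of rank $n$. I would then check: (a) that an equivalence $g'_{ij}=s_i g_{ij}s_j^{-1}$ with $s_i\in G(\mathcal{O}_X(U_i))$ induces an isomorphism $P_g\to P_{g'}$ via $(x_i)\mapsto(s_ix_i)$; (b) that passing to a refinement $\mathfrak{V}\geqslant \mathfrak{U}$ via a refinement map $\tau$ produces a canonically isomorphic module, because the defining equations for $P_{g'}$ (with $g'_{j_1j_2}=g_{\tau(j_1)\tau(j_2)}$) are the restrictions of those for $P_g$. Together these show that the assignment $g\mapsto [P_g]$ descends to a well-defined map $\alpha\colon \check{H}^1(X,G)\to\texttt{Proj}_n(R)$.

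\textbf{Surjectivity.} Given $P\in\texttt{Proj}_n(R)$, for every prime $\mathfrak{p}\subset R$ the localization $P_\mathfrak{p}$ is free of rank $n$, and by finite generation of $P$ there exists $a\notin\mathfrak{p}$ such that $P_a$ is free of rank $n$ over $R_a$. Since these principal opens cover $X$ and $X$ is quasi-compact, I choose a finite cover $\mathfrak{U}=\{D(a_i)\}_{i\in I}$ together with trivializations $\phi_i\colon R_{a_i}^n\xrightarrow{\sim} P_{a_i}$, and set $g_{ij}=\phi_i^{-1}\circ\phi_j\in \mathrm{GL}_n(\mathcal{O}_X(U_i\cap U_j))$. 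The cocycle condition is immediate, and by construction $P_g\cong P$.

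\textbf{Injectivity.} Suppose $g\in\check{Z}^1(\mathfrak{U},G)$ and $g'\in\check{Z}^1(\mathfrak{U}',G)$ yield isomorphic projective modules. Passing to a common refinement $\mathfrak{V}=\{V_k\}$ consisting of principal opens on which both $P_g$ and $P_{g'}$ trivialize, the trivializations on $V_k$ together with a chosen $R$-module isomorphism $P_g\xrightarrow{\sim}P_{g'}$ give elements $s_k\in\mathrm{GL}_n(\mathcal{O}_X(V_k))$ exhibiting that the pullback cocycles to $\mathfrak{V}$ are equivalent. Hence they define the same class in $\check{H}^1(X,G)$. The main technical obstacle is verifying compatibility with refinements in step (b) of the construction of $\alpha$; once that naturality is in place the rest is routine bookkeeping using that $\mathcal{O}_X$ is a sheaf and that principal opens $D(a)\cap D(b)=D(ab)$ behave well under localization.
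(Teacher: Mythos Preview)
Your proposal is correct and follows the standard approach. The paper does not actually supply a proof of this proposition: it cites Milne and only recalls the construction of $\alpha^{-1}$ (associating to a projective module $M$ the cocycle $\psi_{ij}=\varphi_i\circ\varphi_j^{-1}$ coming from local trivializations $\varphi_i\colon M\otimes_R\mathcal{O}_X(U_i)\to\mathcal{O}_X(U_i)^n$), which is exactly your surjectivity step with the trivializations written in the inverse direction. Your gluing construction of $\alpha$ and your injectivity argument via a common refinement supply the details the paper leaves to the reference.
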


We recall the construction of $\alpha^{-1}$. Let $M$ be a finitely generated projective $R$-module of rank $n$. Then there exists an open cover $\mathfrak{U} = \{ U_i \}_{i \in I}$ of $X$ by principal open sets such that for each $i \in I$, we have an isomorphism of $\mathcal{O}_X(U_i)$-modules
\begin{equation}\label{E:Isom7}
\varphi_i \colon M \otimes_{R} \mathcal{O}_X(U_i) \longrightarrow \mathcal{O}_X(U_i)^n
\end{equation}
(here $\mathcal{O}_X$ denotes the structure sheaf on $X$). For $i , j \in I$, we let
\begin{equation}\label{E:cocycle}
\psi_{ij} := (\varphi_i \otimes_{\mathcal{O}_X(U_j)} \mathrm{id}_{\mathcal{O}_X(U_i \cap U_j)}) (\varphi^{-1}_j \otimes_{\mathcal{O}_X(U_i)} \mathrm{id}_{\mathcal{O}_X(U_i \cap U_j)}) \ \in \ G(\mathcal{O}_X(U_i \cap U_j)).
\end{equation}
 One shows that the family $\psi(M) := (\psi_{ij})$ belongs to $\check{Z}^1(\mathfrak{U} , G)$, and the corresponding cohomology class $[\psi(M)] \in \check{H}^1(X , G)$ is well-defined. Then $\alpha^{-1}$ sends the isomorphism class of $M$ to $[\psi(M)]$.

\vskip2mm

Next, for an $R$-module $M$, we let $M^* = \mathrm{Hom}_{R\text{-mod}}(M , R)$. There is then a natural homomorphism of $R$-modules $c_M \colon M \to M^{**} = (M^*)^*$, and $M$ is called {\it reflexive} if this homomorphism is an isomorphism. Clearly, finitely generated projective modules are reflexive. It is easy to see that a reflexive module is necessarily torsion-free, hence can be regarded as an $R$-submodule of the $K$-vector space $W = M \otimes_{R} K$. We will assume henceforth that $M$ is finitely generated, and then $W$ can be identified with $K^n$, where $n$ is the {\it rank} of $M$,   making $M$ an $R$-{\it lattice} (i.e., a finitely generated $R$-submodule containing a $K$-basis) in $W = K^n$. One then checks that $M^*$ can be identified with the {\it dual lattice} of $M$, which is defined to be the set of all $x^* \in W^*$ such that $x^*(x) \in R$ for all $x \in M$ (and is indeed a lattice in $W^*$). Using the canonical isomorphism $c_W \colon W \to W^{**}$ we will routinely identify $M^{**}$ with a lattice in $W$.


For $\mathfrak{p} \in \mathrm{P}$, let $R_{\mathfrak{p}}$ be the corresponding localization of $R$, and for any $R$-submodule $M \subset W$ set $\mathrm{M}_{\mathfrak{p}} := R_{\mathfrak{p}} M \subset W$, noting that if $M$ is an $R$-lattice, then $\mathrm{M}_{\mathfrak{p}}$ is an $R_{\mathfrak{p}}$-lattice in $W$. One shows (cf. \cite[Ch. VII, \S4, Theorem 2]{Bour-CA}) that $M$ is reflexive if and only if
\begin{equation}\label{E:Refl1}
M = \bigcap_{\mathfrak{p} \in \mathrm{P}} M_{\mathfrak{p}}.
\end{equation}
Indeed, since $R = \bigcap_{\mathfrak{p} \in \mathrm{P}} R_{\mathfrak{p}}$, for any $R$-lattice $M$ in $W$ we have $$M^* = \bigcap_{\mathfrak{p} \in \mathrm{P}} M^*_{\mathfrak{p}},$$ where $M^*_{\mathfrak{p}}$ consists of those $x^* \in W^*$ that satisfy $x^*(x) \in R_{\mathfrak{p}}$ for all $x \in M_{\mathfrak{p}}$ (equivalently, for all $x \in M$). Applying this to $M^*$, we obtain $$M^{**} = \bigcap_{\mathfrak{p} \in
\mathrm{P}} M^{**}_{\mathfrak{p}}.$$ But for every $\mathfrak{p} \in \mathrm{P}$, the localization $R_{\mathfrak{p}}$ is a DVR, hence a PID. Thus, $M_{\mathfrak{p}}$ is a free $R_{\mathfrak{p}}$-module, hence $M^{**}_{\mathfrak{p}} = M_{\mathfrak{p}}$. We see that for {\it any} $R$-lattice $M$ in $W$, one has
\begin{equation}\label{E:Refl2}
M^{**} = \bigcap_{\mathfrak{p} \in \mathrm{P}} M_{\mathfrak{p}},
\end{equation}
and therefore (\ref{E:Refl1}) is  equivalent to $M = M^{**}$. In particular, a reflexive lattice is uniquely determined by its localizations. Furthermore, it is easy to see that for any lattice $M \subset W$, the lattice $M^* \subset W^*$ is always reflexive. In particular, the lattice $M^{**} \subset W$ is reflexive. Comparing this with (\ref{E:Refl2}), we conclude that for any lattice $M \subset W$, the intersection $\bigcap_{\mathfrak{p} \in \mathrm{P}} M_{\mathfrak{p}}$ is a reflexive lattice in $W$.

\vskip2mm

Let $\texttt{Refl}_n(R)$ denote the set of isomorphism classes of reflexive $R$-lattices in $W = K^n$.

\begin{prop}\label{P:Ap2}
{\rm (cf. Bourbaki \cite[Ch. VII, \S4, n$^{\circ}$ 3, Remark]{Bour-CA})} For $G = \mathrm{GL}_n$, there is a natural bijection {\rm $$\beta \colon G(K) \backslash G(\mathbf{A}) /  G(\mathbf{A}\!^{\infty}) \to \texttt{Refl}_n(R)$$}.
\end{prop}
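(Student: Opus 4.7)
The plan is to define $\beta$ on the double coset of $g = (g_v) \in G(\mathbf{A})$ by setting $\beta(G(K) g G(\mathbf{A}\!^{\infty}))$ equal to the isomorphism class of the $R$-submodule
$$
M(g) \;:=\; \bigcap_{v \in V} g_v \mathscr{O}_v^n \;\subset\; K^n,
$$
and then verifying that this is a well-defined bijection onto $\texttt{Refl}_n(R)$. Each $g_v \mathscr{O}_v^n$ is a free $\mathscr{O}_v$-lattice of rank $n$ in $K^n$, and it coincides with $\mathscr{O}_v^n$ for almost all $v$, since $g_v \in \mathrm{GL}_n(\mathscr{O}_v)$ for almost all $v$ by definition of the rational adele group. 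The first task is to show that $M(g)$ is a reflexive $R$-lattice whose localization at $\mathfrak{p}_v$ is precisely $g_v \mathscr{O}_v^n$ for every $v$. I would do this by producing, via a commensurability adjustment of $R^n$ at the finitely many ``bad'' places where $g_v$ is not integral, a finitely generated $R$-submodule $N \subset K^n$ with $N_{\mathfrak{p}_v} = g_v \mathscr{O}_v^n$ for all $v$; then by the discussion preceding the proposition (in particular formula (6.2)), $M(g) = N^{**} = \bigcap_v N_{\mathfrak{p}_v}$ is reflexive with exactly the prescribed localizations.

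Well-definedness is then immediate: if $g' = sgt$ with $s \in G(K)$ and $t \in G(\mathbf{A}\!^{\infty})$, then $g'_v \mathscr{O}_v^n = s g_v t_v \mathscr{O}_v^n = s g_v \mathscr{O}_v^n$, because $t_v \in \mathrm{GL}_n(\mathscr{O}_v)$ preserves $\mathscr{O}_v^n$; hence $M(g') = s \cdot M(g)$, which is $R$-isomorphic to $M(g)$.

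For injectivity, I would begin with an $R$-isomorphism $\varphi \colon M(g) \to M(g')$ and extend it by tensoring with $K$ to an element $s \in \mathrm{GL}_n(K) = G(K)$ with $s \cdot M(g) = M(g')$. Localizing at each $\mathfrak{p}_v$ gives $s g_v \mathscr{O}_v^n = g'_v \mathscr{O}_v^n$, so that $t_v := g_v^{-1} s^{-1} g'_v$ lies in $\mathrm{GL}_n(\mathscr{O}_v)$ for every $v$; the tuple $t = (t_v)$ is then an element of $G(\mathbf{A}\!^{\infty})$ and $g' = sgt$, placing $g$ and $g'$ in the same double coset. For surjectivity, given $M \in \texttt{Refl}_n(R)$, each localization $M_{\mathfrak{p}_v}$ is a free $R_{\mathfrak{p}_v}$-module of rank $n$, since $R_{\mathfrak{p}_v}$ is a DVR and $M$ is finitely generated, torsion-free, and of $K$-rank $n$. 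I would choose an $\mathscr{O}_v$-basis of $M_{\mathfrak{p}_v}$ inside $K^n$ and let $g_v \in \mathrm{GL}_n(K)$ be the corresponding change-of-basis matrix, so that $g_v \mathscr{O}_v^n = M_{\mathfrak{p}_v}$. The standard commensurability $d R^n \subset M \subset (d')^{-1} R^n$ for suitable nonzero $d, d' \in R$ (guaranteed by $M$ being a finitely generated lattice in $K^n$) forces $M_{\mathfrak{p}_v} = \mathscr{O}_v^n$ for all $v$ with $v(dd') = 0$, hence for almost all $v$ by condition (A); at such $v$ one may take $g_v = 1$, and therefore $g = (g_v) \in G(\mathbf{A})$. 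By construction and formula (6.2), $\beta$ applied to the class of this adele returns $[M]$.

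The main technical point is the local identification $M(g)_{\mathfrak{p}_v} = g_v \mathscr{O}_v^n$, since infinite intersections do not commute with localization in general; everything else amounts to formal bookkeeping with double cosets. The reflexive-hull argument based on formula (6.2), reducing the problem to exhibiting a finitely generated auxiliary lattice $N$ with the prescribed localizations, resolves this cleanly — but constructing $N$ at the finite set of ``bad'' places is where the only real care is needed.
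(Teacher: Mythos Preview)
Your proposal is correct and follows essentially the same route as the paper. The paper packages the one nontrivial step --- that $\bigcap_v g_v\mathscr{O}_v^n$ is a reflexive $R$-lattice whose localization at each $\mathfrak{p}_v$ is exactly $g_v\mathscr{O}_v^n$ --- as a separate result (Theorem \ref{T:Refl1}, from Bourbaki), whose proof constructs your auxiliary $N$ as $Q = R^n \cap g_{\mathfrak{p}_1}\mathscr{O}_{\mathfrak{p}_1}^n \cap \cdots \cap g_{\mathfrak{p}_h}\mathscr{O}_{\mathfrak{p}_h}^n$ and verifies the localizations using the fact that for two distinct height-one primes $\mathfrak{p}_1,\mathfrak{p}_2$ one has $\big((R\setminus\mathfrak{p}_1)(R\setminus\mathfrak{p}_2)\big)^{-1}R = K$; the paper then phrases the bijection as a transitive action of $G(\mathbf{A})$ on the set of reflexive lattices with stabilizer $G(\mathbf{A}\!^{\infty})$, which is just a repackaging of your well-definedness/injectivity/surjectivity checks.
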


The proof uses the following result.

\begin{thm}\label{T:Refl1}
{\rm (Bourbaki \cite[Ch. VII, \S4, Thm. 3]{Bour-CA})} Let $M$ be an $R$-lattice in $W = K^n$. Then

\vskip2mm

\noindent {\rm (1)} \parbox[t]{15.5cm}{for any other lattice $N \subset W$ we have $N_{\mathfrak{p}} = M_{\mathfrak{p}}$ for almost all $\mathfrak{p} \in \mathrm{P}$;}

\vskip2mm

\noindent {\rm (2)} \parbox[t]{15.5cm}{given $R_{\mathfrak{p}}$-lattices $N(\mathfrak{p}) \subset W$ for $\mathfrak{p} \in \mathrm{P}$ such that $N(\mathfrak{p}) = M_{\mathfrak{p}}$ for almost all $\mathfrak{p}$, the intersection $$N = \bigcap_{\mathfrak{p} \in \mathrm{P}} N(\mathfrak{p})$$ is a reflexive $R$-lattice such that $N_{\mathfrak{p}} = N(\mathfrak{p})$ for all $\mathfrak{p} \in \mathrm{P}$.}
\end{thm}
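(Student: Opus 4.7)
\medskip

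The plan is to prove (1) and (2) by localization-plus-approximation arguments, relying only on the fact that $R$ is a Krull domain (so height-one primes correspond to discrete valuations of $K$, each nonzero element of $R$ lies in only finitely many of them, and $R = \bigcap_{\mathfrak{p} \in \mathrm{P}} R_{\mathfrak{p}}$).

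For part (1), the idea is that two lattices $M, N \subset W$ generate the same $K$-vector space, so writing generators of each as $K$-linear combinations of a basis contained in the other and clearing denominators produces nonzero $a, b \in R$ with $aN \subset M$ and $bM \subset N$. Each of $a, b$ lies in only finitely many $\mathfrak{p} \in \mathrm{P}$, and for any $\mathfrak{p}$ avoiding both, $a$ and $b$ are units in $R_{\mathfrak{p}}$, so localizing yields $N_{\mathfrak{p}} = M_{\mathfrak{p}}$.

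For part (2), let $S = \{\mathfrak{p} \in \mathrm{P} : N(\mathfrak{p}) \neq M_{\mathfrak{p}}\}$, a finite set. I would proceed in three steps. First, \textbf{sandwich} $N$ between two $R$-lattices: for each $\mathfrak{p} \in S$, since $M_{\mathfrak{p}}$ and $N(\mathfrak{p})$ are both $R_{\mathfrak{p}}$-lattices in $W$ and $R_{\mathfrak{p}}$ is a DVR, there is an integer $e_{\mathfrak{p}} \geqslant 0$ controlling the mutual inclusions; picking nonzero elements of each $\mathfrak{p} \in S$ in $R$ and taking a suitable product produces nonzero $a, b \in R$ with $aM \subset N(\mathfrak{p}) \subset b^{-1} M_{\mathfrak{p}}$ for every $\mathfrak{p} \in S$ (the same inclusions hold trivially for $\mathfrak{p} \notin S$ because $N(\mathfrak{p}) = M_{\mathfrak{p}}$). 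Intersecting over all $\mathfrak{p}$ and invoking (\ref{E:Refl2}) gives $aM \subset N \subset b^{-1} M^{**}$, whence $N$ is finitely generated over the noetherian ring $R$ and contains a $K$-basis (since $aM$ does), so $N$ is a lattice. Second, \textbf{reflexivity}: for every $\mathfrak{p}$ one has $N_{\mathfrak{p}} \subset N(\mathfrak{p})$ since $N \subset N(\mathfrak{p})$ and the latter is already an $R_{\mathfrak{p}}$-module; applying (\ref{E:Refl2}) yields $N^{**} = \bigcap_{\mathfrak{p}} N_{\mathfrak{p}} \subset \bigcap_{\mathfrak{p}} N(\mathfrak{p}) = N$, which together with the obvious $N \subset N^{**}$ gives $N = N^{**}$.

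The \textbf{third step}, showing $N_{\mathfrak{p}} = N(\mathfrak{p})$, is the delicate one and where the main obstacle lies: one needs a common denominator outside $\mathfrak{p}$. Fix $x \in N(\mathfrak{p})$; since $x \in W$ and $M$ is a lattice, $x$ has negative valuation at only finitely many $\mathfrak{q} \in \mathrm{P}$, so $x \in M_{\mathfrak{q}} = N(\mathfrak{q})$ for all $\mathfrak{q}$ outside some finite set $T \supset S \cup \{\mathfrak{p}\}$. For each $\mathfrak{q} \in T \setminus \{\mathfrak{p}\}$, choose $s_{\mathfrak{q}} \in \mathfrak{q} \setminus \mathfrak{p}$, which exists because distinct height-one primes are mutually non-comparable, and take an exponent $k_{\mathfrak{q}}$ large enough that $s_{\mathfrak{q}}^{k_{\mathfrak{q}}} x \in N(\mathfrak{q})$. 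Set $s = \prod_{\mathfrak{q} \in T \setminus \{\mathfrak{p}\}} s_{\mathfrak{q}}^{k_{\mathfrak{q}}} \in R \setminus \mathfrak{p}$. A case analysis ($\mathfrak{q} \in T \setminus \{\mathfrak{p}\}$, $\mathfrak{q} \notin T$, $\mathfrak{q} = \mathfrak{p}$) verifies $sx \in N(\mathfrak{q})$ in each case, so $sx \in N$ and $x \in N_{\mathfrak{p}}$, finishing the proof. The hardest point is precisely this simultaneous approximation: one must accept a common multiplier $s$ that clears denominators at every troublesome $\mathfrak{q} \neq \mathfrak{p}$ without lying in $\mathfrak{p}$ itself, which is why the non-comparability of height-one primes is indispensable.
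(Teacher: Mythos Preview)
Your proof is correct. Part (1) is essentially identical to the paper's argument. For part (2), you and the paper both rest on the incomparability of distinct height-one primes, but you execute the crucial step $N_{\mathfrak{p}} \supset N(\mathfrak{p})$ differently.

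The paper first reduces to the case $N(\mathfrak{p}) \subset M_{\mathfrak{p}}$ for all $\mathfrak{p}$, then introduces the \emph{finite} intersection $Q = M \cap N(\mathfrak{p}_1) \cap \cdots \cap N(\mathfrak{p}_h)$ (where $\mathfrak{p}_1,\ldots,\mathfrak{p}_h$ are the exceptional primes) and computes $Q_{\mathfrak{p}}$ directly: since localization commutes with finite intersections, and since for distinct height-one primes $\mathfrak{p} \neq \mathfrak{q}$ one has $(R_{\mathfrak{q}})_{\mathfrak{p}} = K$ (their equation (\ref{E:Refl3})), each term $N(\mathfrak{p}_j)_{\mathfrak{p}}$ with $\mathfrak{p}_j \neq \mathfrak{p}$ becomes all of $W$ and drops out, giving $Q_{\mathfrak{p}} = N(\mathfrak{p})$ immediately. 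Reflexivity then follows from $N = Q^{**}$.

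Your approach avoids the preliminary reduction and the auxiliary lattice $Q$: you sandwich $N$ directly, prove reflexivity by the clean observation $N^{**} = \bigcap N_{\mathfrak{p}} \subset \bigcap N(\mathfrak{p}) = N$, and then for each $x \in N(\mathfrak{p})$ construct by hand a multiplier $s \in R \setminus \mathfrak{p}$ with $sx \in N$. Your argument is more element-wise and uses only the weak form of incomparability ($\mathfrak{q} \not\subset \mathfrak{p}$), whereas the paper's module-theoretic version is slicker once one has (\ref{E:Refl3}) but requires the initial reduction step. Both are perfectly valid; the paper's route is shorter, yours is perhaps more transparent about where the Krull hypothesis enters.
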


\vskip2mm

{\it Sketch of proof.} (1) Since $M$ and $N$ are finitely generated as $R$-modules and span $W$ over $K$, there exist nonzero $x , y \in
R$ such that $xN \subset M$ and $yM \subset N$. However both $x$ and $y$ are units in $R_{\mathfrak{p}}$ for almost all $\mathfrak{p} \in \mathrm{P}$ (see \cite[Ch. VII, \S 1, Theorem 4]{Bour-CA}), and the required fact follows.

\vskip1mm

(2) We can assume that $N(\mathfrak{p}) \subset M_{\mathfrak{p}}$ for all $\mathfrak{p} \in \mathrm{P}$. Then $N \subset M^{**}$, hence finitely generated since $R$ is noetherian. It is easy to see that for every $x \in W$ there exists $a \in R$ such that $a x \in N$, implying that $N$ is a lattice in $W$. We only need to show that $N_{\mathfrak{p}} = N(\mathfrak{p})$ for all $\mathfrak{p} \in \mathrm{P}$.

First, we observe that for any two distinct $\mathfrak{p}_1 , \mathfrak{p}_2 \in \mathrm{P}$ and the multiplicative set $S = (R \setminus \mathfrak{p}_1) (R \setminus \mathfrak{p}_2)$, one has
\begin{equation}\label{E:Refl3}
S^{-1} R = K.
\end{equation}
Indeed, otherwise, the ring $S^{-1}R$ would contain a nonzero prime ideal that would correspond to a nonzero prime ideal $\mathfrak{p}$ of $R$ having empty intersection with $S$. But then $\mathfrak{p} \subset \mathfrak{p}_1 \cap \mathfrak{p}_2$, which is impossible because $\mathfrak{p}_1$ and $\mathfrak{p}_2$ are distinct height one prime ideals.

Now, let $\mathfrak{p}_1, \ldots , \mathfrak{p}_h \in \mathrm{P}$ be those prime ideals for which $N(\mathfrak{p}_i) \neq M_{\mathfrak{p}_i}$. We will now show that for the lattice
$$
Q := M \cap N(\mathfrak{p}_1) \cap \cdots \cap N(\mathfrak{p}_h),
$$
one has $Q_{\mathfrak{p}} = N(\mathfrak{p})$ for all $\mathfrak{p}$. Indeed, if $\mathfrak{p} \in \mathrm{P} \setminus \{\mathfrak{p}_1, \ldots , \mathfrak{p}_h \}$ then
$$
Q_{\mathfrak{p}} = M_{\mathfrak{p}} \cap N(\mathfrak{p}_1)_{\mathfrak{p}} \cap \cdots \cap N(\mathfrak{p}_h)_{\mathfrak{p}} = M_{\mathfrak{p}} = N(\mathfrak{p})
$$
since according to (\ref{E:Refl3}) for any $i = 1, \ldots , h$ we have $(R_{\mathfrak{p}_i})_{\mathfrak{p}} = K$ hence $N(\mathfrak{p}_i)_{\mathfrak{p}} = W$. Let now $\mathfrak{p} = \mathfrak{p}_i$. Then $N(\mathfrak{p}_j)_{\mathfrak{p}_i} = W$ for $j \neq i$ and $N(\mathfrak{p}_i)_{\mathfrak{p}_i} = N(\mathfrak{p}_i)$, so
$$
Q_{\mathfrak{p}_i} = M_{\mathfrak{p}_i} \cap N(\mathfrak{p}_i) = N(\mathfrak{p}_i).
$$
Then $N$ coincides with $Q^{**} = \bigcap_{\mathfrak{p} \in \mathrm{P}} Q_{\mathfrak{p}}$ which is reflexive, and $N_{\mathfrak{p}} = Q_{\mathfrak{p}} = N(\mathfrak{p})$ for all $\mathfrak{p} \in \mathrm{P}$, as required.

\vskip2mm

{\it Proof of Proposition \ref{P:Ap2}.} For $G = \mathrm{GL}_n$, the adele group $G(\mathbf{A})$ naturally acts on the set $\mathscr{L} = \{ L \}$ of all reflexive $R$-lattices in $W = K^n$ as follows. Let $L_0 = R^n$ be the standard lattice. Given $g = (g_{\mathfrak{p}}) \in G(\mathbf{A})$ (where we write $g_{\mathfrak{p}}$ instead of $g_{v_{\mathfrak{p}}}$) and a lattice $L \in \mathscr{L}$, for almost all $\mathfrak{p} \in \mathrm{P}$ we have
$$
g_{\mathfrak{p}} \in G(R_{\mathfrak{p}}) \ \ \ \text{and} \ \ \ L_{\mathfrak{p}} = L_{0 \mathfrak{p}}.
$$
It follows that $g_{\mathfrak{p}}(L_{\mathfrak{p}}) = L_{\mathfrak{p}}$ for almost all $\mathfrak{p} \in \mathrm{P}$, so according to Theorem \ref{T:Refl1},
$$
g(L) := \bigcap_{\mathfrak{p} \in \mathrm{P}} g_{\mathfrak{p}}(L_{\mathfrak{p}})
$$
is a reflexive lattice in $W$ (i.e., $g(L) \in \mathscr{L}$), and in fact is the {\it only} reflexive lattice whose localization at $\mathfrak{p}$ is $g_{\mathfrak{p}}(L_{\mathfrak{p}})$ for all $\mathfrak{p} \in \mathrm{P}$. By looking at localizations, it is easy to see that the map $(g , L) \mapsto g(L)$ defines an action of $G(\mathbf{A})$ on $\mathscr{L}$. Furthermore, since the localization $L_{\mathfrak{p}}$ of any lattice $L \in \mathscr{L}$ is a free $R_{\mathfrak{p}}$-module for every prime $\mathfrak{p} \in \mathrm{P}$, this action is transitive. Since the the stabilizer of $L_0$ is $G(\mathbf{A}\!^{\infty})$, we obtain a natural bijection
$$
G(\mathbf{A})/G(\mathbf{A}\!^{\infty}) \longrightarrow \mathscr{L}, \ \ \ gG(\mathbf{A}\!^{\infty}) \mapsto g(L_0).
$$
Finally, any $R$-module isomorphism $L \to L'$ between two reflexive $R$-lattice is induced by an element of $G(K)$, and the proposition follows. \hfill $\Box$.

\vskip2mm

We will now amalgamate the bijections $\alpha$ and $\beta$ described in Propositions \ref{P:Ap1} and \ref{P:Ap2} into a~commutative diagram.
\begin{prop}\label{P:Ap3}
The diagram {\rm
\begin{equation}\label{E:Diagr}
\xymatrix{\check{H}^1(X , G) \ar[rr]^{\theta} \ar[d]_{\alpha} & & G(K) \backslash G(\mathbf{A}) / G(\mathbf{A}\!^{\infty})  \ar[d]^{\beta} \\
\texttt{Proj}_n(R)  \ar[rr]^{\iota} & & \texttt{Refl}_n(R)}
\end{equation}}

\noindent where $\theta = \varepsilon \circ f$, with $f \colon \check{H}^1(X , G) \to G(\mathbf{A}\!^{\infty}) \backslash G(\mathbf{A}) / G(K)$ the injection
constructed in Proposition~\ref{P:Ap1}, and $\varepsilon \colon G(\mathbf{A}\!^{\infty}) \backslash G(\mathbf{A}) / G(K) \longrightarrow G(K) \backslash G(\mathbf{A}) / G(\mathbf{A}\!^{\infty})$ the map given by $$G(\mathbf{A}\!^{\infty}) x G(K) \mapsto G(K) x^{-1} G(\mathbf{A}\!^{\infty}),$$
and $\iota$  is the natural embedding, commutes.
\end{prop}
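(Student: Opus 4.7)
The strategy is to chase an arbitrary projective module $M \in \texttt{Proj}_n(R)$ around the diagram in both directions and compare the resulting reflexive lattices up to $R$-module isomorphism. Start by fixing an open cover $\mathfrak{U} = \{U_i\}_{i \in I}$ of $X = \mathrm{Spec}\, R$ by principal open sets over which $M$ becomes free, with trivializations $\varphi_i \colon M \otimes_R \mathcal{O}_X(U_i) \to \mathcal{O}_X(U_i)^n$. Fix once and for all an identification $W := M \otimes_R K = K^n$, regarding $M$ as an $R$-lattice in $W$. Since each $\varphi_i$ becomes, after tensoring with $K$, an automorphism of $K^n$, we obtain a family of elements $g_i \in G(K)$ uniquely determined by the property that $\varphi_i$ coincides with multiplication by $g_i$ on $M \otimes_R \mathcal{O}_X(U_i) \subset W$. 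Then the \v Cech cocycle attached to $M$ is $\psi_{ij} = g_i g_j^{-1}$, and the key local identity to remember is
\[
g_i(M_{\mathfrak{p}}) = R_{\mathfrak{p}}^n = L_{0,\mathfrak{p}} \quad \text{whenever} \quad \mathfrak{p} \in U_i,
\]
where $L_0 = R^n$ is the standard lattice. Equivalently, $M_{\mathfrak{p}} = g_i^{-1}(L_{0,\mathfrak{p}})$.

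Next, I would unwind the definition of $f$ from \S\ref{S:DC-H1} applied to this cocycle. Choosing a base index $i_0$ and an arbitrary $h_{i_0} \in G(K)$, the auxiliary elements become $h_i := \psi_{i i_0} h_{i_0} = g_i \cdot (g_{i_0}^{-1} h_{i_0})$; writing $a := g_{i_0}^{-1} h_{i_0} \in G(K)$, one has $h_i = g_i a$. For any refinement map $\phi \colon V \to I$ with $\mathfrak{p}_v \in U_{\phi(v)}$, the adele $\tilde f(\psi(M); i_0, h_{i_0}, \phi)$ has $v$-component $g_{\phi(v)} a$, so its inverse has $v$-component $a^{-1} g_{\phi(v)}^{-1}$.

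Now push this through $\beta$. By the construction recalled in the proof of Proposition~\ref{P:Ap2}, $\beta$ sends the double coset $G(K) x G(\mathbf{A}\!^\infty)$ to the unique reflexive lattice whose localization at each $\mathfrak{p}$ equals $x_{\mathfrak{p}}(L_{0,\mathfrak{p}})$. Applied to $x = \tilde f(\psi(M))^{-1}$, the boxed local identity gives
\[
x_{\mathfrak{p}}(L_{0,\mathfrak{p}}) \;=\; a^{-1} g_{\phi(v_{\mathfrak{p}})}^{-1}(L_{0,\mathfrak{p}}) \;=\; a^{-1}(M_{\mathfrak{p}}) \;=\; (a^{-1} M)_{\mathfrak{p}},
\]
so by Theorem~\ref{T:Refl1}(2), together with the reflexivity of $M$ (which gives $M = \bigcap_{\mathfrak{p}} M_{\mathfrak{p}}$), we conclude $\beta(\theta([\psi(M)])) = [\,a^{-1} M\,]$ in $\texttt{Refl}_n(R)$. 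Since $a \in G(K) = \mathrm{GL}_n(K)$ acts by a $K$-linear automorphism of $W$, the lattice $a^{-1} M$ is $R$-module isomorphic to $M$, and hence represents the same class $[M] = \iota(\alpha([\psi(M)]))$. This verifies commutativity of the diagram.

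\textbf{Main obstacle.} The only place where real care is needed is making sure that the three separate choices --- the base index $i_0$, the element $h_{i_0} \in G(K)$, and the refinement map $\phi$ --- all disappear in the final answer. For $i_0$ and $\phi$ this is already built into the construction of $f_{\mathfrak{U}}$ (items 1$^\circ$ and 3$^\circ$ of \S\ref{S:DC-H1}); the remaining dependence on $h_{i_0}$ enters precisely through the global factor $a \in G(K)$, which, as noted above, drops out once we pass to $R$-module isomorphism classes on the $\texttt{Refl}_n(R)$ side. Once this bookkeeping is verified, independence of the chosen cover $\mathfrak{U}$ and trivializations $\varphi_i$ is automatic from the compatibility $f_{\mathfrak{U}} = f_{\mathfrak{V}} \circ \tau_{\mathfrak{U}}^{\mathfrak{V}}$ established in the previous section and from the fact that $\alpha$ is a well-defined bijection on isomorphism classes.
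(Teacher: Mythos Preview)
Your proof is correct and follows essentially the same approach as the paper's: both chase a projective module $M$ around the diagram by writing the trivializations $\varphi_i$ as elements of $G(K)$, identifying the cocycle as $\psi_{ij} = \varphi_i \varphi_j^{-1}$, and using the local identity $\varphi_i^{-1}(R_{\mathfrak p}^n) = M_{\mathfrak p}$ to recognize the reflexive lattice produced by $\beta \circ \theta$ as $M$ itself. The only cosmetic difference is that the paper makes the convenient choice $h_{i_0} = \varphi_{i_0}$ (so your element $a$ becomes the identity and the extra $G(K)$-factor never appears), whereas you keep $h_{i_0}$ arbitrary and then observe that the resulting $a^{-1}M$ is isomorphic to $M$; both are fine.
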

\begin{proof}
Let $M$ be a projective $R$-module of rank $n$. As we indicated after the statement of Proposition~\ref{P:Ap1}, the map $\alpha^{-1}$ takes the isomorphism class of $M$ to the cohomology class $[\psi(M)]$ of the cocycle $\psi(M) = (\psi_{ij})$, with respect to the open cover $\mathfrak{U} = \{ U_i \}_{i \in I}$, where $\psi_{ij}$ is defined by (\ref{E:cocycle}). One can extend the isomorphisms $\varphi_i$ from (\ref{E:Isom7}) to linear isomorphisms $K^n \to K^n$ (without changing notations), i.e. view $\phi_i$ as an element of $\mathrm{GL}_n(K) = G(K)$. As in the proof of Proposition \ref{P:DoubleC}, we pick a map $\phi \colon V \to I$ such that $\mathfrak{p}_v \in U_{\phi(v)}$ for all $v \in V$. Furthermore, fix $i_0 \in I$ and for the corresponding element take $\varphi_{i_0}$. Then
$$
\psi_{i i_0} \varphi_{i_0} = \varphi_i \ \ \text{for all} \ \ i \in I.
$$
Then $\theta$ takes $[\psi(M)]$ to the double coset $G(K) g G(\mathbf{A}\!^{\infty})$ where $g_v = \varphi_{\phi(v)}^{-1}$ for all $v \in V$. Implementing the construction described in the proof of Proposition \ref{P:Ap2}, we see that $\beta$ take this double cosets to the isomorphism class of the module
$$
\bigcap_{\mathfrak{p} \in \mathrm{P}} \varphi_{\phi_{v_{\mathfrak{p}}}}^{-1}(R_{\mathfrak{p}}^n) = \bigcap_{\mathfrak{p} \in \mathrm{P}} M_{\mathfrak{p}} = M,
$$
proving the commutativity of (\ref{E:Diagr}).
\end{proof}

\vskip2mm

\addtocounter{thm}{1}

\noindent {\bf Remark 6.5.} It follows from Proposition \ref{P:Ap3} that contrary to the result of Harder \cite[2.3]{Harder} that the map $f \colon \check{H}^1(X , G) \to G(\mathbf{A}\!^{\infty}) \backslash G(\mathbf{A}) / G(K)$ is bijective for any affine flat group scheme $G$ of finite type over a Dedekind ring $R$, this map may not be surjective even for $G = \mathrm{GL}_n$ for all $n$ over more general rings. Indeed, by the proposition its surjectivity for $\mathrm{GL}_n$ for all $n$ would imply that finitely generated reflexive $R$-modules are all projective. On the other hand, it is well-known that the second syzygy (see \cite[vol. II, \S5.1]{Rowen} for the definition) of any finitely generated $R$-module is reflexive, and, conversely, every finitely generated reflexive module can be realized as the second syzygy of some finitely  generated $R$-module (cf. \cite{Treg}, p. 445). So, in our situation, we would obtain that the {\it global dimension} of $R$ is $\leq 2$ (see \cite[Ch. 2, \S5]{Lam} and \cite[Ch. IV]{Serre-LocAlg} for relevant definitions). We also recall that for a regular noetherian ring $R$, the global dimension coincides with Krull dimension \cite[Ch. 2, Theorem 5.94]{Lam}. Thus, if we take $R$ to be a noetherian regular domain of Krull dimension $\geq 3$ (e.g. $R = \Q[x, y, z]$ or $\Z[x , y]$), then the above map $f$ is {\it not} surjective for $G = \mathrm{GL}_n$ for some $n$.

\vskip2mm

This remark contrasts with the following statement.
\begin{thm}\label{T:Ap2-1}
{\rm (cf. \cite[Proposition 2]{Sam})} Let $R$ be a regular integral domain of Krull dimension $\leqslant 2$. Then every reflexive $R$-lattice is a projective $R$-module. In other words, {\rm $\texttt{Refl}_n(R) = \texttt{Proj}_n(R)$} for all $n \geqslant 1$.
\end{thm}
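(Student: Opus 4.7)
The plan is to reduce to the case of a regular local ring of dimension at most $2$, and then apply the Auslander--Buchsbaum formula. Projectivity of a finitely generated module over a Noetherian ring is local on $\mathrm{Spec}\: R$: $M$ is projective if and only if $M_{\mathfrak{p}}$ is free for every prime $\mathfrak{p}$. Since $\mathrm{Hom}_R(-, R)$ commutes with localization on finitely presented modules (and $M$, $M^*$ are finitely presented over the Noetherian ring $R$), the localization $M_{\mathfrak{p}}$ remains reflexive over the regular local ring $R_{\mathfrak{p}}$, whose dimension is still at most $2$. We may therefore assume that $R$ itself is a regular local ring $(R, \mathfrak{m})$ of dimension $d \leqslant 2$.

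The cases $d = 0$ and $d = 1$ are immediate. If $d = 0$ then $R$ is a field; if $d = 1$ then $R$ is a discrete valuation ring, and $M$ is torsion-free (since it embeds in $M \otimes_R K$), hence free over this principal ideal domain.

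The essential case is $d = 2$. Choose a finite presentation $R^a \to R^b \to M^* \to 0$ of the dual; applying $\mathrm{Hom}_R(-, R)$ and using $M = M^{**}$ produces an exact sequence
\[
0 \longrightarrow M \longrightarrow R^b \longrightarrow R^a
\]
that exhibits $M$ as a second syzygy. Splitting this as $0 \to M \to R^b \to N \to 0$, either $N = 0$ (in which case $M$ is already free), or $N$ is a nonzero torsion-free $R$-submodule of $R^a$ and therefore satisfies $\mathrm{depth}_R(N) \geqslant 1$ (any nonzero torsion-free module over a Noetherian domain of positive depth contains a nonzerodivisor). The standard depth lemma applied to this short exact sequence then yields
\[
\mathrm{depth}_R(M) \;\geqslant\; \min\bigl(\mathrm{depth}(R^b),\ \mathrm{depth}(N)+1\bigr) \;\geqslant\; 2.
\]
Since $R$ is regular, $M$ has finite projective dimension, so the Auslander--Buchsbaum formula
\[
\mathrm{pd}_R(M) + \mathrm{depth}_R(M) \;=\; \mathrm{depth}(R) \;=\; 2
\]
forces $\mathrm{pd}_R(M) = 0$. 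Hence $M$ is free, and the proof is complete.

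The only conceptual point beyond standard commutative algebra is the observation that a finitely generated reflexive module over a Noetherian ring is automatically a second syzygy, which is what makes the depth estimate available; once that is in place there is no real obstacle, as the conclusion becomes a one-line application of Auslander--Buchsbaum. (An alternative route avoiding second syzygies is to invoke directly that a reflexive module over a Noetherian normal domain satisfies Serre's condition $(S_2)$, which for our local $R$ of dimension $2$ gives the same bound $\mathrm{depth}_R(M) \geqslant 2$.)
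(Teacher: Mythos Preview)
Your proof is correct and follows essentially the same route as the paper: reduce to the local case and show that each $M_{\mathfrak{p}}$ is free over the regular local ring $R_{\mathfrak{p}}$. The paper simply cites \cite[Corollary 6, p.~78]{Serre-LocAlg} for this local freeness, whereas you spell out the standard Auslander--Buchsbaum argument that underlies that citation.
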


A proof quickly follows, for example, from \cite[Corollary 6, p. 78]{Serre-LocAlg} which implies that given a reflexive $R$-lattice $M$, for every prime ideal $\mathfrak{p}$ of $R$, the localization $M_{\mathfrak{p}}$ is a free $R_{\mathfrak{p}}$-module, making $M$ projective. We note that this result for $R = \Z_p[\![T]\!]$ is fundamental for Iwasawa theory, and we refer the reader to \cite{Ouyang} and \cite{Sharifi} for contemporary accounts that contain the proof of Theorem \ref{T:Ap2-1} and further results on the structure of modules over this ring.

\vskip2mm

\begin{cor}\label{C:Ap1}
Let $G = \mathrm{GL}_n$, let $R$ be a regular domain of Krull dimension $\leqslant 2$, and let $X = \mathrm{Spec}\: R$. Then then map
$$
f \colon \check{H}^1(X , G) \longrightarrow G(\mathbf{A}\!^{\infty}) \backslash G(\mathbf{A}) / G(K),
$$
constructed in Proposition \ref{P:Ap1}, is a bijection.
\end{cor}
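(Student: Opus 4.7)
The plan is to deduce the corollary directly from the commutative diagram of Proposition \ref{P:Ap3} combined with Theorem \ref{T:Ap2-1}, so essentially no new computation is needed.

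First, I would recall the setup: in diagram (\ref{E:Diagr}), both vertical arrows $\alpha \colon \check{H}^1(X,G) \to \texttt{Proj}_n(R)$ and $\beta \colon G(K) \backslash G(\mathbf{A}) / G(\mathbf{A}\!^{\infty}) \to \texttt{Refl}_n(R)$ are bijections (Propositions \ref{P:Ap1} and \ref{P:Ap2}), and the horizontal map $\theta = \varepsilon \circ f$ is determined by these and the inclusion $\iota \colon \texttt{Proj}_n(R) \hookrightarrow \texttt{Refl}_n(R)$ on the bottom. The auxiliary map $\varepsilon$ is visibly a bijection, as it is induced by the inversion $x \mapsto x^{-1}$ on $G(\mathbf{A})$, which interchanges left and right cosets.

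Next, I invoke Theorem \ref{T:Ap2-1}: under the hypothesis that $R$ is a regular domain of Krull dimension $\leqslant 2$, every finitely generated reflexive $R$-lattice is projective, so the inclusion $\iota$ is actually a bijection. By the commutativity of (\ref{E:Diagr}), this forces $\theta = \beta^{-1} \circ \iota \circ \alpha$ to be a bijection as well. Finally, since $\varepsilon$ is a bijection, $f = \varepsilon^{-1} \circ \theta$ is a bijection too.

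The only non-trivial input is Theorem \ref{T:Ap2-1}, but this is already stated (with references) in the excerpt, so there is no real obstacle --- the argument is a pure diagram chase. If I wanted to make the corollary slightly more self-contained, I could remark that injectivity of $f$ was already proved in Proposition \ref{P:DoubleC}, so what the diagram really provides here is the surjectivity: every double coset $G(\mathbf{A}\!^{\infty}) x G(K)$ corresponds via $\beta \circ \varepsilon^{-1}$ to a reflexive $R$-lattice in $K^n$, which under the hypotheses on $R$ is projective and hence comes from a class in $\check{H}^1(X,G)$ via $\alpha$.
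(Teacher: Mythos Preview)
Your proposal is correct and follows exactly the argument the paper intends: the corollary is stated without explicit proof precisely because it is an immediate diagram chase from Proposition~\ref{P:Ap3} together with Theorem~\ref{T:Ap2-1}, using that $\alpha$, $\beta$, and $\varepsilon$ are bijections and that $\iota$ becomes one under the Krull-dimension-$\leqslant 2$ hypothesis. Your closing remark that injectivity of $f$ was already in Proposition~\ref{P:DoubleC} and that the new content here is surjectivity is also exactly the right emphasis.
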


This result prompts the following

\addtocounter{thm}{1}

\vskip2mm

\noindent {\bf Question 6.8.} {\it Is the map $f \colon \check{H}^1(X , G) \to  G(\mathbf{A}\!^{\infty}) \backslash G(\mathbf{A}) / G(K)$ bijective for any affine flat group scheme of finite type over a regular domain $R$ of Krull dimension $\leqslant 2$?}

\vskip2mm

We note that the proof of the above corollary hinges on the interpretation of $\check{H}^1(X , G)$ and $G(K) \backslash G(\mathbf{A}) / G(\mathbf{A}\!^{\infty})$ for $G = \mathrm{GL}_n$ in terms of projective and reflexive modules. It would be interesting to obtain similar interpretations for some other groups (e.g., it is known in the number-theoretic situation that when $G$ is the orthogonal group of a quadratic form, the double cosets $G(\mathbf{A}\!^{\infty}) \backslash G(\mathbf{A}) / G(K)$ are in bijection with the classes in the genus of the quadratic form, cf. \cite[Proposition 8.4]{PlRa}.)

\vskip2mm

We will now quote two famous results about projective modules and derive some consequences for $\check{H}^1(X , G)$ and double cosets for $G = \mathrm{GL}_n$. 

\begin{thm}\label{T:Serre1}
{\rm (Serre \cite{Serre})} Let $R$ be a commutative noetherian ring of Krull dimension $d < \infty$. If $P$ is a finitely generated projective $R$-module of constant rank $r \geqslant d$, then $P = Q \oplus F$, where $F$ is free and $Q$ has rank at most $d$.
\end{thm}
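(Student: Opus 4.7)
The plan is to prove the theorem by induction on the rank $r$ of $P$. The base case $r = d$ is trivial: take $F = 0$ and $Q = P$. For the inductive step, suppose $r \geqslant d+1$; it suffices to produce a \emph{unimodular element} $x \in P$, i.e. an element with $\varphi(x) = 1$ for some $\varphi \in P^* := \Hom_R(P , R)$. Once such $x$ is in hand, the exact sequence $0 \to Rx \to P \to P/Rx \to 0$ splits via $\varphi$, yielding $P \cong R \oplus P'$ with $P'$ a finitely generated projective $R$-module of rank $r - 1 \geqslant d$; the induction hypothesis applied to $P'$ then furnishes the desired decomposition (and allows the free rank of the splitting to grow by one at each step until the remaining summand has rank exactly $d$).

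The heart of the proof is therefore the assertion: if $P$ is a finitely generated projective $R$-module of constant rank $r \geqslant d+1$, then $P$ has a unimodular element. Recall that $x \in P$ is said to be \emph{basic at} $\mathfrak{p} \in \mathrm{Spec}(R)$ if $x \notin \mathfrak{p} P_{\mathfrak{p}}$, equivalently if the image of $x$ in the $\kappa(\mathfrak{p})$-vector space $P_{\mathfrak{p}}/\mathfrak{p} P_{\mathfrak{p}}$ is nonzero; and $x$ is unimodular if and only if it is basic at every prime. The proof constructs $x$ iteratively. Starting from any $x_0 \in P$, consider the closed subset $Z(x) = \{\, \mathfrak{p} \in \mathrm{Spec}(R) : x \in \mathfrak{p} P_{\mathfrak{p}} \,\} = V(O_P(x))$ cut out by the order ideal $O_P(x) = \{\, \varphi(x) : \varphi \in P^* \,\}$. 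At each step, pick the (finitely many, by noetherianness) generic points $\mathfrak{p}_1, \ldots , \mathfrak{p}_s$ of the maximal-dimensional irreducible components of $Z(x)$; using the strict inequality $r \geqslant d + 1 > \dim Z(x)$ together with a prime-avoidance argument applied in $P_{\mathfrak{p}_i}/\mathfrak{p}_i P_{\mathfrak{p}_i}$, one produces $a \in R$ and $y \in P$ such that $x + ay$ becomes basic at each $\mathfrak{p}_i$ while remaining basic at every prime where $x$ already was. This strictly lowers $\dim Z(x)$, so after at most $d + 1$ iterations one reaches $Z(x) = \emptyset$ and $x$ is unimodular.

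The principal obstacle is that $R$ in general has infinitely many maximal ideals, so one cannot apply prime avoidance to all primes where $x$ fails to be basic simultaneously. The Serre--Eisenbud--Evans construction bypasses this by, at each stage, only attending to the \emph{finitely many} generic points of the current bad locus $Z(x)$, and using the rank inequality $\rk(P) > \dim Z(x)$ to guarantee enough room in each $P_{\mathfrak{p}_i}/\mathfrak{p}_i P_{\mathfrak{p}_i}$ to make $x$ basic there without disturbing the basicness already achieved elsewhere. This is the crucial use of the hypothesis $r \geqslant d+1$, and explains why the theorem allows $Q$ to retain rank up to (but never below) $d$.
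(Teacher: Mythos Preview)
The paper does not actually prove this theorem: it is quoted as a classical result of Serre (with a citation to \cite{Serre}) and is used as a black box in deriving Corollary~\ref{C:Ap3}. There is therefore no ``paper's own proof'' to compare against.

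Your sketch follows the standard Serre/Eisenbud--Evans line and is correct in outline: reduce by induction on the rank to the existence of a unimodular element when $r \geqslant d+1$, and obtain that element by iteratively shrinking the closed ``bad locus'' $Z(x) = V(O_P(x))$. Two points deserve a bit more care than you give them. First, the inequality you invoke, $r \geqslant d+1 > \dim Z(x)$, is not quite the mechanism by which the rank hypothesis enters; what one actually uses is that at each of the finitely many generic points $\mathfrak{p}_i$ of the top-dimensional components of $Z(x)$ the fibre $P_{\mathfrak{p}_i}/\mathfrak{p}_i P_{\mathfrak{p}_i}$ has positive dimension, together with a careful choice of the scalar $a$ so that the modification $x \mapsto x + ay$ does not destroy basicness outside $Z(x)$. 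Second, the phrase ``while remaining basic at every prime where $x$ already was'' hides real work: one typically takes $a$ in the ideal $O_P(x)$ itself (so that for $\mathfrak{q} \notin Z(x)$ one can recover basicness of $x+ay$ from that of $x$ via a unit adjustment), or one stratifies $\mathrm{Spec}(R)$ by coheight and proceeds one level at a time. Either device works, but neither is entirely automatic from ``prime avoidance'' alone. With those details filled in, your argument is the standard proof.
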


\begin{thm}\label{T:Bass1}
{\rm (Bass \cite{Bass})} Let $R$ be a commutative noetherian ring of Krull dimension $d < \infty$, and let $P$ and $Q$ be  finitely generated projective $R$-modules of constant rank $r > d$. If $P \oplus F \simeq Q \oplus F$ with $F$ free and finitely generated, then $P \simeq Q$.
\end{thm}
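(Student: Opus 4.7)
The approach I would take is through Bass's stable range condition. Recall that a ring $R$ has \emph{stable range} at most $n$, written $\mathrm{sr}(R) \leqslant n$, if every unimodular vector $(a_1, \ldots, a_{n+1}) \in R^{n+1}$ (i.e., one with $\sum c_i a_i = 1$ for some $c_i \in R$) admits $b_1, \ldots, b_n \in R$ for which the shortened vector $(a_1 + b_1 a_{n+1}, \ldots, a_n + b_n a_{n+1})$ is again unimodular. The proof splits cleanly into two independent results.

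First, I would establish Bass's stable range estimate: any commutative noetherian ring $R$ of finite Krull dimension $d$ satisfies $\mathrm{sr}(R) \leqslant d+1$. I would argue by induction on $d$, the base case $d = 0$ (where $R$ is semilocal Artinian) being essentially formal. For the inductive step, given a unimodular $(a_1, \ldots, a_{d+2})$, I would pass to $\bar{R} = R/(a_{d+2})$, which has Krull dimension $\leqslant d$. Using prime avoidance over the (finitely many) minimal primes of $\bar{R}$, one selects $b_1, \ldots, b_{d+1} \in R$ so that the image of $(a_1 + b_1 a_{d+2}, \ldots, a_{d+1} + b_{d+1} a_{d+2})$ in $\bar{R}^{d+1}$ avoids every such prime; this image is then unimodular in $\bar{R}$, and lifting back yields unimodularity in $R$.

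Second, I would deduce the cancellation conclusion from the stable range estimate. By induction on $\mathrm{rk}\, F$ it suffices to cancel a single copy of $R$, so assume an isomorphism $\phi \colon P \oplus R \xrightarrow{\sim} Q \oplus R$ and set $(q_0, a_0) := \phi(0, 1) \in Q \oplus R$. Surjectivity of $\phi$ forces this element to be \emph{unimodular} in $Q \oplus R$, in the sense that there exist $\psi \in \Hom(Q, R)$ and $c \in R$ with $\psi(q_0) + c a_0 = 1$. The hypothesis $r > d$ gives $\mathrm{rk}\, Q \geqslant \mathrm{sr}(R)$, which allows me to modify $(q_0, a_0)$ via a transvection $(q, t) \mapsto (q + yt, t)$ (for a suitable $y \in Q$) to arrange that the new first component is unimodular in $Q$; a further automorphism of $Q$ then brings $(q_0, a_0)$ into the form $(0, 1)$. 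Composing $\phi$ with these automorphisms of $Q \oplus R$ produces an isomorphism fixing $(0,1)$, which therefore preserves the direct sum decomposition and restricts to an isomorphism $P \xrightarrow{\sim} Q$.

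The principal technical obstacle lies in Step 1: the prime avoidance argument for Bass's stable range estimate requires careful bookkeeping of the minimal primes of $R/(a_{d+2})$ and a nontrivial verification that the constructed modification genuinely yields unimodularity. Step 2, while not entirely trivial, is much more formal once the correct notion of unimodular element of a projective module is in place and the right class of elementary automorphisms of $Q \oplus R$ is identified.
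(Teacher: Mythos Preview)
The paper does not prove this theorem; it is quoted as a classical result of Bass with a citation to \cite{Bass}, so there is no proof in the paper to compare against. Your overall two-step strategy --- establish the stable range bound $\mathrm{sr}(R) \leqslant d+1$ and then deduce cancellation --- is indeed the standard route, and your Step~2 is essentially the correct argument as sketched.

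Your Step~1, however, contains a genuine gap. You propose to modify $a_i$ to $a_i + b_i a_{d+2}$ and then examine the image in $\bar{R} = R/(a_{d+2})$, but this modification is invisible there: the image of $a_i + b_i a_{d+2}$ in $\bar{R}$ is just $\bar{a}_i$, independent of $b_i$. Moreover, $(\bar{a}_1, \ldots, \bar{a}_{d+1})$ is \emph{automatically} unimodular in $\bar{R}$ (since the original row was unimodular in $R$), so the prime-avoidance step as you describe it has no content, and the claimed ``lifting back yields unimodularity in $R$'' is precisely the nontrivial assertion that remains unproved. The induction on $d$ is also not set up correctly: $\bar{R}$ has dimension $\leqslant d$, not $< d$, so the inductive parameter does not drop. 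The actual argument (in Bass's original paper, or in later treatments such as Eisenbud--Evans or Vaserstein) works instead over the locus of maximal ideals \emph{not} containing $a_{d+2}$ --- those containing $a_{d+2}$ being handled for free --- and uses a genuine dimension count on that noetherian topological space to produce the $b_i$.
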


\vskip2mm

Combining these results with Proposition \ref{P:Ap1}, we obtain the following.
\begin{cor}\label{C:Ap3}
Let $R$ be a noetherian integrally closed domain of Krull dimension $d < \infty$, and let $X = \mathrm{Spec}\: R$. For $1 \leqslant m \leqslant n$, we consider the map
$$
\lambda_{m , n} \colon \check{H}^1(X  , \mathrm{GL}_m) \longrightarrow \check{H}^1(X , \mathrm{GL}_n)
$$
induced by the natural embedding $\mathrm{GL}_m \hookrightarrow \mathrm{GL}_n$. Then $\lambda_{m , n}$ is surjective if $m \geqslant d$ and is
bijective if $m > d$.
\end{cor}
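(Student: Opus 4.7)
The plan is to transport the statement through the bijection $\alpha$ of Proposition \ref{P:Ap1} and then read off both surjectivity and injectivity from the two cited structural results about projective modules (Theorems \ref{T:Serre1} and \ref{T:Bass1}).

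First, I would identify the map $\lambda_{m , n}$ on the module side. Using the bijections $\alpha \colon \check{H}^1(X , \mathrm{GL}_k) \to \texttt{Proj}_k(R)$ from Proposition \ref{P:Ap1}, one obtains a commutative diagram
$$
\xymatrix{\check{H}^1(X , \mathrm{GL}_m) \ar[r]^{\lambda_{m , n}} \ar[d]_{\alpha} & \check{H}^1(X , \mathrm{GL}_n) \ar[d]^{\alpha} \\ \texttt{Proj}_m(R) \ar[r]^{\mu_{m , n}} & \texttt{Proj}_n(R)}
$$
where $\mu_{m , n}$ is the ``stabilization'' map $[P] \mapsto [P \oplus R^{n - m}]$. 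This is the natural claim, and it is a straightforward verification from the cocycle formula (\ref{E:cocycle}): if $\{\varphi_i\}$ trivializes $M$ on a cover $\mathfrak{U}$ so that $M$ corresponds to the cocycle $\psi_{ij} \in \mathrm{GL}_m(\mathcal{O}_X(U_i \cap U_j))$, then the family $\{\varphi_i \oplus \mathrm{id}_{R^{n-m}}\}$ trivializes $M \oplus R^{n-m}$ and yields the cocycle $\psi_{ij} \oplus I_{n-m}$, which is exactly the image of $[\psi_{ij}]$ under the embedding $\mathrm{GL}_m \hookrightarrow \mathrm{GL}_n$. So the corollary reduces to proving that $\mu_{m , n}$ is surjective when $m \geqslant d$ and bijective when $m > d$.

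For surjectivity, assume $m \geqslant d$ and take any $[Q] \in \texttt{Proj}_n(R)$. Since $\mathrm{rk}(Q) = n \geqslant m \geqslant d$, Theorem~\ref{T:Serre1} gives a decomposition $Q \simeq Q_0 \oplus F$ with $F$ free and $\mathrm{rk}(Q_0) \leqslant d \leqslant m$. Writing $F \simeq R^s$ with $s = n - \mathrm{rk}(Q_0) \geqslant n - m$, we can peel off a free direct summand of rank $n - m$ from $F$, and set $P := Q_0 \oplus R^{s - (n - m)}$. Then $\mathrm{rk}(P) = m$, $P$ is projective, and $Q \simeq P \oplus R^{n - m} = \mu_{m , n}(P)$, so $\mu_{m , n}$ is surjective, establishing the first claim.

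For injectivity, assume $m > d$ and suppose $\mu_{m , n}(P_1) = \mu_{m , n}(P_2)$, i.e.\ $P_1 \oplus R^{n - m} \simeq P_2 \oplus R^{n - m}$, where $P_1$ and $P_2$ are finitely generated projective of constant rank $m > d$. Theorem~\ref{T:Bass1} applies directly with $F = R^{n - m}$ and yields $P_1 \simeq P_2$, so $\mu_{m , n}$ (and hence $\lambda_{m , n}$) is injective. The two parts together give the corollary. There is no real obstacle here: once the identification of $\lambda_{m , n}$ with $\mu_{m , n}$ is in place, the argument is purely a bookkeeping application of the Serre and Bass theorems, and the only point requiring a moment of care is ensuring that the rank of the summand $Q_0$ produced by Serre's theorem can be padded up to exactly $m$ using the free complement, which is automatic because $\mathrm{rk}(Q_0) \leqslant d \leqslant m \leqslant n = \mathrm{rk}(Q)$.
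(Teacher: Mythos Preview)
Your proof is correct and follows essentially the same approach as the paper: the paper's proof consists of a single sentence noting that under the identification of Proposition~\ref{P:Ap1} the map $\lambda_{m,n}$ corresponds to $[P] \mapsto [P \oplus R^{n-m}]$, leaving the application of Theorems~\ref{T:Serre1} and~\ref{T:Bass1} implicit. You have simply written out those details explicitly, including the verification that the cocycle map matches the stabilization map on modules.
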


Indeed, in terms of the identifications provided by Proposition \ref{P:Ap1}, the map $\lambda_{m , n}$ corresponds to the map $\texttt{Proj}_m(R) \to
\texttt{Proj}_n(R)$ given by $[P] \mapsto [P \oplus R^{n-m}]$.

\vskip1mm

In conjunction with Corollary \ref{C:Ap1}, this statement yields
\begin{cor}\label{C:Ap4}
Let $R$ be a noetherian regular domain of Krull dimension $\leqslant 2$, and let $X = \mathrm{Spec}\: R$. For $1 \leqslant m \leqslant n$, we consider the map
$$
\mu_{m , n} \colon \mathrm{GL}_m(\mathbf{A}^{\infty}) \backslash \mathrm{GL}_m(\mathbf{A}) / \mathrm{GL}_m(K) \longrightarrow
\mathrm{GL}_n(\mathbf{A}\!^{\infty}) \backslash \mathrm{GL}_n(\mathbf{A}) / \mathrm{GL}_n(K)
$$
induced by the natural embedding $\mathrm{GL}_m \hookrightarrow \mathrm{GL}_n$. Then $\mu_{m , n}$ is surjective for $m \geqslant 2$ and bijective for $m > 2$.
\end{cor}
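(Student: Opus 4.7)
The strategy is to combine Corollaries \ref{C:Ap1} and \ref{C:Ap3}. Under the hypothesis that $R$ is a regular domain of Krull dimension $\leqslant 2$, Corollary \ref{C:Ap1} tells us that the maps $f_k \colon \check{H}^1(X , \mathrm{GL}_k) \to \mathrm{GL}_k(\mathbf{A}\!^{\infty}) \backslash \mathrm{GL}_k(\mathbf{A}) / \mathrm{GL}_k(K)$ are bijections for all $k \geqslant 1$ (equivalently, by Theorem \ref{T:Ap2-1}, every finitely generated reflexive $R$-lattice is projective, so $\texttt{Refl}_k(R) = \texttt{Proj}_k(R)$). The plan is therefore to identify the adelic map $\mu_{m,n}$ with the cohomological map $\lambda_{m,n}$ via $f_m$ and $f_n$, and then invoke Corollary \ref{C:Ap3}.

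More explicitly, I would verify the commutativity of the square
$$
\xymatrix{\check{H}^1(X , \mathrm{GL}_m) \ar[rr]^{\lambda_{m , n}} \ar[d]_{f_m} & & \check{H}^1(X , \mathrm{GL}_n) \ar[d]^{f_n} \\ \mathrm{GL}_m(\mathbf{A}\!^{\infty}) \backslash \mathrm{GL}_m(\mathbf{A}) / \mathrm{GL}_m(K) \ar[rr]^{\mu_{m , n}} & & \mathrm{GL}_n(\mathbf{A}\!^{\infty}) \backslash \mathrm{GL}_n(\mathbf{A}) / \mathrm{GL}_n(K)}
$$
Writing $\iota \colon \mathrm{GL}_m \hookrightarrow \mathrm{GL}_n$ for the block embedding that induces both horizontal arrows, this is essentially a naturality statement for the construction of $f$ in \S\ref{S:DC-H1}: if $(g_{ij}) \in \check{Z}^1(\mathfrak{U} , \mathrm{GL}_m)$ is trivialized by a family $\{h_i\} \subset \mathrm{GL}_m(K)$, then the image cocycle $(\iota(g_{ij}))$ is trivialized by $\{\iota(h_i)\}$, and the adelic representative produced by the recipe following Proposition \ref{P:DoubleC} (for the same choice of function $\phi \colon V \to I$) is simply the componentwise image under $\iota$ of the representative for the original cocycle --- and that is exactly what $\mu_{m,n}$ does to a double coset. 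Alternatively, one may invoke Proposition \ref{P:Ap3} in ranks $m$ and $n$ simultaneously: in the identifications with $\texttt{Proj}_k(R)$ and $\texttt{Refl}_k(R)$ provided by Propositions \ref{P:Ap1} and \ref{P:Ap2}, both horizontal arrows correspond to the operation $[M] \mapsto [M \oplus R^{n-m}]$, and the natural inclusion $\texttt{Proj}_k(R) \hookrightarrow \texttt{Refl}_k(R)$ manifestly commutes with this operation.

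Once commutativity is in place, the corollary follows by a straightforward transfer of properties: letting $d$ denote the Krull dimension of $R$, we have $d \leqslant 2$, so Corollary \ref{C:Ap3} gives that $\lambda_{m , n}$ is surjective whenever $m \geqslant 2 \geqslant d$ and bijective whenever $m > 2 \geqslant d$. Transporting through the bijections $f_m$ and $f_n$ yields the same properties for $\mu_{m , n}$. The only nontrivial point is the commutativity check for the square above, which amounts to bookkeeping with the choices of open cover, trivialization, and the function $\phi$ entering the construction of $f$, but is otherwise routine.
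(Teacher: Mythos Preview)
Your proposal is correct and follows exactly the paper's approach: the paper simply states that Corollary~\ref{C:Ap4} follows from Corollary~\ref{C:Ap3} ``in conjunction with Corollary~\ref{C:Ap1}''. Your write-up merely makes explicit the compatibility of $f$ with the block embedding $\mathrm{GL}_m \hookrightarrow \mathrm{GL}_n$ (the commutative square), which the paper leaves tacit but which is indeed routine from the construction in \S\ref{S:DC-H1} or, as you note, from Proposition~\ref{P:Ap3}.
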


It would be interesting to determine if $\mu_{m , n}$ is surjective (resp., bijective) whenever $m \geqslant d$ (resp. $m > d$) for any noetherian regular domain of Krull dimension $d < \infty$. Among other things, this would help to see if the restrictions on $R$ from Remark 6.5 for $f$ to be surjective would also be necessary for Condition (T) to hold. More precisely, suppose that for a given $R$, Condition (T) holds {\it uniformly} for all $\mathrm{GL}_n$, i.e. there exists $V' \subset V$ with finite complement such that $\vert \mathrm{Cl}(\mathrm{GL}_n, K, V') \vert = 1$ for all $n$. Without loss of generality, we may assume that $V' = V \setminus V(a)$ for some nonzero $a \in R$, and then by Proposition \ref{P:Ap2} our assumption  would imply that $\vert \texttt{Refl}_n(R_a) \vert = 1$ for all $n$, i.e. every finitely generated reflexive $R_a$-module is free. As we discussed in Remark 6.5, for $R$ regular noetherian this is possible only if its Krull dimension is $\leqslant 2$. Thus, if $R$ is a regular noetherian domain of Krull dimension $d \geqslant 3$, then Condition (T) cannot hold uniformly. However, if we knew that the maps $\mu_{m , n}$ are bijections for all $n \geqslant m  > d$, we could say that Condition (T) actually fails for some $\mathrm{GL}_t$ with $t \leqslant d + 1$. In any case, we do expect Condition (T) to hold if $R$ is a regular domain of Krull dimension $\leqslant 2$ which is a finitely generated algebra over $\Z$ or a finitely generated field (cf. Corollary \ref{C:Ap5} below).

\vskip2mm

For a commutative ring $R$, we let $K_0(R)$ denote its Grothendieck group (cf. \cite{Weibel}).

\begin{lemma}\label{L:Ap1}
Let $R$ be a regular noetherian integral domain of Krull dimension $d < \infty$ such that the group $K_0(R)$ is finitely generated. Then there exists a nonzero $a \in R$ such that over the localization $R_a$, every finitely generated projective module of rank $> d$ is free.
\end{lemma}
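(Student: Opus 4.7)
The plan is to exploit the finite generation of $K_0(R)$ by representing finitely many generators of $\tilde{K}_0(R) := \ker(\mathrm{rank}\colon K_0(R) \to \Z)$ and killing them by a single localization, then using Bass cancellation to promote stably free to free.

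Since $R$ is an integral domain, the rank map splits $K_0(R) = \Z \cdot [R] \oplus \tilde{K}_0(R)$, and finite generation of $K_0(R)$ yields finitely many generators $\alpha_1, \ldots, \alpha_k$ of $\tilde{K}_0(R)$. By Theorem \ref{T:Serre1}, we may write each $\alpha_i = [P_i] - n_i [R]$ with $P_i$ a finitely generated projective $R$-module of rank $n_i \leq d$. Since $K$ is a field, $P_i \otimes_R K$ is free of rank $n_i$; choosing a $K$-basis and clearing denominators on a finite set of generators of $P_i$ produces, for some nonzero $a' \in R$, an $R_{a'}$-linear map $R_{a'}^{n_i} \to P_i \otimes_R R_{a'}$ whose $K$-localization is an isomorphism. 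Its kernel and cokernel are finitely generated over the Noetherian ring $R_{a'}$ and vanish after $\otimes_R K$, hence are annihilated by nonzero elements of $R$; inverting these we find a nonzero $a_i \in R$ with $P_i \otimes_R R_{a_i} \simeq R_{a_i}^{n_i}$.

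Set $a := a_1 \cdots a_k$. Then each $P_i \otimes_R R_a$ is free, so each $\alpha_i$ dies in $K_0(R_a)$. To upgrade this to $\tilde{K}_0(R_a) = 0$, I invoke the fact that $K_0(R) \to K_0(R_a)$ is surjective: since both $R$ and $R_a$ are regular Noetherian, finite projective resolutions of finitely generated modules identify $K_0$ with the Grothendieck group $G_0$ of finitely generated modules for both rings, and the $G$-theoretic localization sequence
$$
G_0(R/(a)) \longrightarrow G_0(R) \longrightarrow G_0(R_a) \longrightarrow 0
$$
yields the surjectivity. Hence $K_0(R_a) = \Z \cdot [R_a]$, so every finitely generated projective $R_a$-module is stably free. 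If $P'$ is one of rank $r > d$, then $P' \oplus R_a^m \simeq R_a^{r+m}$ for some $m \geq 0$, and since $r > d \geq \dim R_a$, Bass's cancellation theorem (Theorem \ref{T:Bass1}) gives $P' \simeq R_a^r$.

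The main obstacle is the surjectivity of the localization map $K_0(R) \to K_0(R_a)$, which rests crucially on the regularity hypothesis via the identification $K_0 = G_0$ and the $G$-theoretic localization sequence; without regularity, classes in $K_0(R_a)$ need not lift from $K_0(R)$. The clearing-of-denominators step is routine given the Noetherianity of $R$.
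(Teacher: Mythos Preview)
Your proof is correct and follows essentially the same approach as the paper: pick finitely many projectives whose classes generate $K_0(R)$, localize so that they become free, use regularity to get surjectivity of $K_0(R)\to K_0(R_a)$ (the paper cites Weibel for this, while you spell out the $G$-theory localization argument), and finish with Bass cancellation. Your invocation of Serre's theorem to bound the ranks of the $P_i$ is harmless but unnecessary---any finitely generated projective over a domain becomes free after a suitable localization, regardless of rank.
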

\begin{proof}
Pick finitely generated projective $R$-modules $P_1, \ldots , P_r$ so that the corresponding classes $$[P_1], \ldots , [P_r] \in K_0(R)$$ generate this group. Then there exists a nonzero $a \in R$ such that all localization $(P_1)_a, \ldots , (P_r)_a$ are free $R_a$-modules. Since $R$ is regular, the map $K_0(R) \to K_0(R_a)$ is surjective (cf. \cite[Ch. II, Application 6.4.1, p. 132]{Weibel}) implying that $K_0(R_a)$ is infinite cyclic with generator $[R_a]$. This means that for any finitely generated projective $R_a$-module $P$, there exist $\ell , m \geqslant 0$ such that $P \oplus (R_a)^\ell \simeq (R_a)^m$. Then if $P$ has rank $> d$, it is free by Theorem \ref{T:Bass1} since the Krull dimension of $R_a$ is $\leqslant d$.
\end{proof}

\vskip.5mm

One may be able to use this lemma to derive some of the required finiteness properties from the following old and notoriously difficult conjecture.

\vskip2mm

\noindent {\bf Conjecture \ref{S:GLn}.14.} (Bass \cite[\S9]{Bass-Conj}) {\it Let $R$ be a finitely generated $\Z$-algebra that is a regular ring. Then the group $K_0(R)$ is finitely generated.}

\vskip2mm

(We note that Bass actually conjectured finite generation of all groups $K_n(R)$ $(n \geqslant 0)$ for such $R$.)

\addtocounter{thm}{1}

\vskip2mm

We would like to end this section with some consequences of Bass's conjecture in the context of our finiteness properties.
\begin{cor}\label{C:Ap5}
Let $R$ be a regular domain of Krull dimension $d < \infty$ which is a finitely generated $\Z$-algebra, $X = \mathrm{Spec}\: R$ and $G = \mathrm{GL}_n$ with $n > d$. If Bass's Conjecture \ref{S:GLn}.14 is true, then there exists an open $U \subset X$ such that $\check{H}^1(U , G) = 1$, i.e. Conjecture 5.2 holds.
\end{cor}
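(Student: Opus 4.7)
The plan is to chain together three ingredients already established earlier in the paper. First, since $R$ is by hypothesis a regular finitely generated $\Z$-algebra, Bass's Conjecture asserts that the Grothendieck group $K_0(R)$ is finitely generated. This is exactly the hypothesis needed to invoke Lemma \ref{L:Ap1}, which produces a nonzero element $a \in R$ with the property that every finitely generated projective $R_a$-module of rank $> d$ is free. The candidate open subscheme will be $U := \mathrm{Spec}\: R_a \subset X$.

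Next, I would apply Proposition \ref{P:Ap1} to the localization $R_a$. Since the localization of a noetherian integrally closed domain is again a noetherian integrally closed domain (with field of fractions still $K$), and its Krull dimension is at most $d$, it fits into the framework of \S\ref{S:GLn}. Proposition \ref{P:Ap1} then supplies a natural bijection
$$
\check{H}^1(U , \mathrm{GL}_n) \ \longrightarrow \ \texttt{Proj}_n(R_a).
$$
Because $n > d$, the defining property of $a$ forces every finitely generated projective $R_a$-module of rank $n$ to be free, so $\texttt{Proj}_n(R_a)$ consists of the single isomorphism class of $R_a^n$. Consequently $\check{H}^1(U, G) = 1$, verifying Conjecture 5.2 for the given $R$ and $G = \mathrm{GL}_n$.

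There is no genuine obstacle in assembling the argument, because the deep input is absorbed into the conditional hypothesis (Bass's Conjecture) and the translation from ``$K_0(R)$ finitely generated'' to ``all sufficiently high-rank projectives free over some principal localization'' has already been isolated as Lemma \ref{L:Ap1}. The only auxiliary points --- that $R_a$ remains noetherian, integrally closed, of Krull dimension $\leqslant d$, and that the identification of $\check{H}^1$ with $\texttt{Proj}_n$ applies to $U$ with exactly the same proof --- are all routine and require no further input.
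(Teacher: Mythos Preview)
Your proposal is correct and follows essentially the same route as the paper's own proof: invoke Bass's Conjecture to get $K_0(R)$ finitely generated, apply Lemma \ref{L:Ap1} to obtain a nonzero $a \in R$ for which $\texttt{Proj}_n(R_a)$ is trivial, and then use the bijection of Proposition \ref{P:Ap1} over $U = \mathrm{Spec}\: R_a$ to conclude $\check{H}^1(U, G) = 1$. The paper's argument is slightly more terse but identical in content.
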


Indeed, assuming the truth of Bass's conjecture, we will have that $K_0(R)$ is finitely generated.  Since $n > d$, we can apply Lemma \ref{L:Ap1} to conclude that there exists a nonzero $a \in R$ such that $\texttt{Proj}_n(R_a)$ reduces to a single element. Then Proposition \ref{P:Ap1} implies that our claim holds for the principal open subset $U \subset X$ defined by $a$.

\vskip1mm

\begin{cor}\label{C:Ap6}
Let $R$ be a regular noetherian domain of Krull dimension $\leqslant 2$ which is a finitely generated $\Z$-algebra, $V$ be the set of discrete valuations of the fraction field $K$ associated with height one primes of $R$, and $G = \mathrm{GL}_n$ with $n \geqslant 3$. If Bass's Conjecture \ref{S:GLn}.14 is true, then the adele group of $G$ associated with $V$ satisfies Condition {\rm (T)}.
\end{cor}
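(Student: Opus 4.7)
The plan is to combine Corollary~\ref{C:Ap5} with Corollary~\ref{C:Ap1}. Since $R$ is a regular finitely generated $\Z$-algebra of Krull dimension $d \leqslant 2$ and $n \geqslant 3 > d$, Corollary~\ref{C:Ap5} (which assumes Bass's conjecture) provides an open subset $U \subset X$ with $\check{H}^1(U, G) = 1$. Inspecting the proof, which routes through Lemma~\ref{L:Ap1}, one may in fact take $U$ to be a principal open subset $\mathrm{Spec}\: R_a$ for some nonzero $a \in R$: indeed, Bass's conjecture delivers a finitely generated $K_0(R)$, and the lemma then produces the required $a$.

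The next step is to transport this triviality to the class set. The localization $R_a$ is again a noetherian integrally closed (in fact regular) domain whose Krull dimension is $\leqslant 2$, and the height one primes of $R_a$ correspond bijectively to those height one primes of $R$ not containing $a$. Consequently the associated set of discrete valuations of $K$ is $V' := V \setminus V(a)$, and $V(a)$ is finite by condition~(A). Applying Corollary~\ref{C:Ap1} to $R_a$ in place of $R$, we obtain a bijection
$$
f \colon \check{H}^1(U, G) \longrightarrow G(\mathbf{A}\!^{\infty}(K, V')) \backslash G(\mathbf{A}(K, V')) / G(K) \;=\; \mathrm{Cl}(G, K, V').
$$
Since the source is a singleton by the first step, so is the target, yielding $G(\mathbf{A}(K, V')) = G(\mathbf{A}\!^{\infty}(K, V')) G(K)$, which is precisely Condition~(T) with the finite exceptional set $S = V(a)$.

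There is no substantial obstacle: the corollary is essentially the amalgamation of Corollaries~\ref{C:Ap5} and~\ref{C:Ap1}, the only wrinkle being the bookkeeping that matches the open subscheme $U = \mathrm{Spec}\: R_a$ with the restricted set of places $V \setminus V(a)$. This compatibility is already built into Proposition~\ref{P:Ap1}, which assigns to a domain its set of height one primes, and hence applies equally well to $R_a$.
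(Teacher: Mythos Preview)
Your proof is correct and follows essentially the same approach as the paper's: both use Lemma~\ref{L:Ap1} (via Corollary~\ref{C:Ap5}) to produce a nonzero $a \in R$ with $\check{H}^1(\mathrm{Spec}\: R_a , G) = 1$, and then invoke Corollary~\ref{C:Ap1} applied to $R_a$ to conclude that the class set over $V' = V \setminus V(a)$ is trivial. Your write-up is simply more explicit about the bookkeeping (regularity and dimension of $R_a$, identification of its height one primes with $V'$); the one minor slip is the reference at the end to Proposition~\ref{P:Ap1}, which concerns the bijection with projective modules rather than the assignment of height one primes---the relevant setup is in \S\ref{S:DC-H1} and equation~(\ref{E:102}).
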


Indeed, the proof of the previous corollary shows that there exists a nonzero $a \in R$ such that for $U = \mathrm{Spec}\: R_a$ we have $\check{H}^1(U , G) = 1$. On the other hand, since the Krull dimension of $R$ is $\leqslant 2$, according to Corollary \ref{C:Ap1} there is a bijection between $\check{H}^1(U , G)$ and the double cosets $G(\mathbf{A}\!^{\infty}(K , V')) \backslash G(\mathbf{A}(K , V')) / G(K)$ of the adele group associated with $V' = V \setminus V(a)$ where $V(a) = \{ v \in V \: \vert \: v(a) \neq 0 \}$ (finite set).

\section{Descent}\label{S:Descent}

As we have seen in the previous section, in certain situations condition (T) for $\mathrm{GL}_n$ $(n \geqslant 3)$ can be derived from Bass's conjecture. Applications described in the main body of the paper, however, require this condition for groups of the form $\mathrm{GL}_{\ell , D}$, where $D$ is a finite-dimensional division algebra. In this section, we will describe a descent procedure that, under certain additional assumptions, enables one to derive Condition (T) for $\mathrm{GL}_{\ell , D}$ from its validity for $\mathrm{GL}_n$.

\vskip2mm

We first review standard Galois descent. Let $L/K$ be a finite Galois extension of degree $d$ with Galois group $\mathscr{G}$, and let $W$ be a vector space over $L$ endowed with a semi-linear action of $\mathscr{G}$. This means that the action of every $\sigma \in \mathscr{G}$ satisfies
$$
\sigma(w_1 + w_2) = \sigma(w_1) + \sigma(w_2) \ \ \text{and} \ \ \sigma(a w) = \sigma(a) \sigma(w)
$$
for all $w, w_1, w_2 \in W$ and $a \in L$.

\vskip1mm

Now, we assume that $K$ is the fraction field of an integrally closed noetherian domain $R$, and let $R'$ denote the integral closure of $R$ in $L$ (which is automatically noetherian). Assume further that

\vskip2mm

(a) $R'$ is a free $R$-module;

\vskip1mm

(b)\footnotemark \ \parbox[t]{15cm}{the discriminant of the trace form $$L \times L \to K, \ \  (x , y) \mapsto \mathrm{Tr}_{L/K}(xy),$$ in some (equivalently, any) $R$-basis of $R'$ is a unit in $R$.}

\footnotetext{We note that this is equivalent to the fact that the map $R' \otimes_{R} R' \to \prod_{\sigma \in \mathscr{G}} R'$, $a \otimes b \mapsto (a\sigma(b))_{\sigma \in \mathscr{G}}$, is an isomorphism, i.e. $R'/R$ is a Galois extension of rings in the usual sense.}

\vskip2mm

\noindent We note that one can ensure that both conditions hold by replacing $R$ by its localization with respect to an appropriate nonzero element
$a \in R$.

\vskip2mm

\begin{lemma}\label{L:Descent}
Keep the above notations and assumptions, and let $M \subset W$ be a $\mathscr{G}$-invariant $R'$-submodule. Then for the $R$-module $M_0 = M^{\mathscr{G}}$ of fixed elements, the canonical map $M_0 \otimes_{R} R' \to M$ is an isomorphism.
\end{lemma}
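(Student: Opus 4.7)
The plan is to prove the lemma by explicit Galois descent, using the trace projection together with a dual basis trick that is available exactly because of assumption (b). The canonical map in question is
$$\phi \colon M_0 \otimes_R R' \to M, \qquad m \otimes a \mapsto a m,$$
and I will verify surjectivity and injectivity separately.

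First I would fix an $R$-basis $e_1, \ldots, e_d$ of $R'$ (which exists by (a)) and, using the nondegeneracy of the trace form guaranteed by (b), produce a dual basis $e_1^*, \ldots, e_d^* \in L$ characterized by $\mathrm{Tr}_{L/K}(e_i e_j^*) = \delta_{ij}$. Since the matrix passing from $\{e_i\}$ to $\{e_i^*\}$ is the inverse of the Gram matrix of the trace form, whose determinant is a unit in $R$ by assumption (b), the elements $e_i^*$ themselves lie in $R'$ and form another $R$-basis of $R'$. The key algebraic identity I need is
$$\sum_{i=1}^d e_i \, \sigma(e_i^*) \;=\; \delta_{\sigma , 1} \qquad \text{for all } \sigma \in \mathscr{G},$$
which is the statement, under the ring isomorphism $R' \otimes_R R' \xrightarrow{\sim} \prod_{\sigma} R'$ of the footnote, that $\sum_i e_i \otimes e_i^*$ is the idempotent projecting onto the $\sigma = 1$ component. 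I would verify this identity by applying the isomorphism to $\sum_i e_i \otimes e_i^*$ and using that $\sigma(e_i^*)$ is the trace-dual basis of $\sigma(e_i)$.

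Granted this identity, introduce the trace projection $\pi \colon M \to M_0$, $\pi(m) = \sum_{\sigma \in \mathscr{G}} \sigma(m)$ (which makes sense because $M$ is $\mathscr{G}$-invariant). For any $m \in M$ set $m_i := \pi(e_i^* m) = \sum_\sigma \sigma(e_i^*) \sigma(m) \in M_0$. Then
$$\sum_{i=1}^d e_i \, m_i \;=\; \sum_\sigma \Bigl( \sum_i e_i \, \sigma(e_i^*) \Bigr) \sigma(m) \;=\; m,$$
so $m$ lies in the image of $\phi$, giving surjectivity. For injectivity, an arbitrary element of $M_0 \otimes_R R'$ can be rewritten as $\sum_i n_i \otimes e_i$ with $n_i \in M_0$ by expanding the $R'$-components in the basis $\{e_i\}$. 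If its image $\sum_i e_i n_i$ vanishes in $M$, then multiplying by $e_j^*$ and applying $\pi$ (and using $\sigma(n_i) = n_i$) yields $n_j = \sum_i \mathrm{Tr}(e_i e_j^*) n_i = 0$ for every $j$, so the original tensor is zero.

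I do not expect a serious obstacle: the whole argument hinges on the single identity $\sum_i e_i \sigma(e_i^*) = \delta_{\sigma,1}$, which is essentially a reformulation of assumption (b) via the footnote. The only point requiring minor care is verifying that $\sigma(e_i^*)$ is indeed trace-dual to $\sigma(e_i)$, which follows from the $\mathscr{G}$-invariance of the trace form; after that, the surjectivity and injectivity computations are symmetric applications of the same trick and involve no further input beyond the $\mathscr{G}$-invariance of $M$ and the $R'$-module structure.
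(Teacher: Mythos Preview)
Your proof is correct and is essentially the same argument as the paper's, just packaged differently. The paper fixes an $R$-basis $a_1,\ldots,a_d$ of $R'$, forms the matrix $A=(\sigma_i(a_j))$, observes that $A^tA$ is the Gram matrix of the trace form and hence $A\in\mathrm{GL}_d(R')$ by (b), and then inverts the relation $(\lambda_j(w))_j=A^t(\sigma_i(w))_i$ with $\lambda_j(w)=\sum_\sigma\sigma(a_jw)\in M_0$ to conclude $w\in R'M_0$. Your dual basis $e_i^*$ is exactly what encodes $(A^t)^{-1}$, and your identity $\sum_i e_i\,\sigma(e_i^*)=\delta_{\sigma,1}$ is the statement $AB^t=I$ for $B=(\sigma_i(e_j^*))$, which follows from $B^tA=I$ (the trace-duality). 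So your surjectivity computation and the paper's are literally the same computation read in two directions. The only substantive difference is that the paper declares at the outset ``We only need to show that $R'M_0=M$'' and omits injectivity, whereas you supply a short direct argument for it; both choices are fine, since injectivity also follows from $M_0$ being $R$-torsion-free and $R'$ being $R$-free.
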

\begin{proof}
We only need to show that $R'M_0 = M$. Let $\mathscr{G} = \{ \sigma_1, \ldots , \sigma_d \}$ and let $a_1, \ldots , a_d$ be an $R$-basis of $R'$. Consider the matrix $A = (\sigma_i(a_j)) \in \mathrm{M}_d(R')$. It is easy to check that $A^tA = (\mathrm{Tr}_{L/K}(a_ia_j))$, so it follows from condition (b) above that $A \in \mathrm{GL}_d(R')$. Now, for $w \in M$ and any $j = 1, \ldots d$, we define
$$
\lambda_j(w) = \sum_{i = 1}^d \sigma_i(a_j w) = \sum_{i = 1}^d \sigma_i(a_j) \sigma_i(w).
$$
Clearly, $\lambda_j(w) \in M_0$. We have the following ``matrix" relation
$$
\left( \begin{array}{c} \lambda_1(w) \\ \vdots \\ \lambda_d(w)   \end{array} \right) \, =  \, A^t \cdot \left( \begin{array}{c} \sigma_1(w) \\ \vdots \\\sigma_d(w)     \end{array}  \right), \ \ \ \text{whence} \ \ \left( \begin{array}{c} \sigma_1(w) \\ \vdots \\\sigma_d(w)     \end{array}  \right) \, = \, (A^t)^{-1} \cdot \left( \begin{array}{c} \lambda_1(w) \\ \vdots \\ \lambda_d(w)   \end{array} \right).
$$
This implies that $\sigma_i(w) \in R' M_0$ for all $i = 1, \ldots , d$. In particular, $w \in R' M_0$, as required. \end{proof}

\vskip2mm

Now, let $C$ be a finite-dimensional algebra over $K$, and let $\mathscr{C} \subset C$ be an $R$-order (i.e., a subring with identity that is an $R$-submodule and contains a $K$-basis of $C$). Then $\mathscr{G}$ acts on $\mathscr{C} \otimes_{R} R'$  through the second factor.
\begin{prop}\label{P:Descent}
Assume that

\vskip2mm

\noindent {\rm (*)} \parbox[t]{15.5cm}{for a right $\mathscr{C}$-module $M$, the fact that $M^d \simeq \mathscr{C}^d$ implies that $M \simeq \mathscr{C}$.}

\vskip2mm

\noindent Then $H^1(\mathscr{G} , (\mathscr{C} \otimes_{R} R')^{\times}) = 1$.
\end{prop}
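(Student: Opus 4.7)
\medskip

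The plan is to run Galois descent in the style of Hilbert 90, using hypothesis (*) as the ``cancellation'' that replaces the fact that every finitely generated $R'$-module locally free of rank $1$ in the classical setting comes from an $R$-module. Given a cocycle $\xi = (\xi_\sigma)_{\sigma \in \mathscr{G}} \in Z^1(\mathscr{G}, (\mathscr{C}')^\times)$, where I abbreviate $\mathscr{C}' = \mathscr{C} \otimes_R R'$ and $C' = C \otimes_K L$, I will first twist the $\mathscr{G}$-action on $C'$. Using that $1 \otimes L$ is central in $C'$, I define
$$
\sigma \ast w \; = \; \xi_\sigma \cdot \sigma(w), \qquad w \in C',
$$
where the $\sigma$ on the right is the original semilinear action through the second factor. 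The cocycle identity $\xi_{\sigma\tau} = \xi_\sigma \cdot \sigma(\xi_\tau)$ makes $\ast$ into a group action, and centrality of $L$ gives semilinearity with respect to $L$. Since $\xi_\sigma \in (\mathscr{C}')^\times$ and $\sigma$ preserves $\mathscr{C}'$, the $R'$-submodule $\mathscr{C}' \subset C'$ is stable under $\ast$; let $M$ denote $\mathscr{C}'$ equipped with this twisted action.

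Next, I apply Lemma \ref{L:Descent} to $M \subset C'$ to obtain a canonical $R'$-linear isomorphism $M_0 \otimes_R R' \xrightarrow{\sim} \mathscr{C}'$, where $M_0 = M^{\mathscr{G}}$. The crucial observation is that right multiplication by elements of $\mathscr{C}'$ (in its original, untwisted structure) satisfies $\sigma \ast (xy) = (\sigma \ast x) \cdot \sigma(y)$ for $x \in M$, $y \in \mathscr{C}'$. Consequently, $M_0$ inherits a right $(\mathscr{C}')^{\mathscr{G}} = \mathscr{C}$-module structure, and the descent isomorphism is right $\mathscr{C}'$-linear.

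Now I invoke condition (a): $R' \simeq R^d$ as an $R$-module, so
$$
M_0^d \; \simeq \; M_0 \otimes_R R' \; \simeq \; \mathscr{C}' \; \simeq \; \mathscr{C}^d
$$
as right $\mathscr{C}$-modules. Hypothesis (*) then yields $M_0 \simeq \mathscr{C}$ as right $\mathscr{C}$-modules, so $M_0$ admits a generator $e$ (the image of $1 \in \mathscr{C}$). Viewing $e \in \mathscr{C}'$, the equality $e\mathscr{C}' = (e\mathscr{C}) \otimes_R R' = M_0 \otimes_R R' = \mathscr{C}'$ forces $e \in (\mathscr{C}')^\times$; and the condition $e \in M^{\mathscr{G}}$, i.e.\ $\sigma \ast e = e$, unwinds as $\xi_\sigma = e \cdot \sigma(e)^{-1}$, exhibiting $\xi$ as a coboundary.

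I expect the only subtle point to be bookkeeping of the module structures: checking that the twisted action is semilinear on all of $C'$ (so that Lemma \ref{L:Descent} genuinely applies), and tracking that the right $\mathscr{C}$-action on $M_0$ inherited from right multiplication in $\mathscr{C}'$ is compatible with the descent isomorphism after base change to $R'$. Once these compatibilities are set up, invoking (*) and reading off the coboundary is mechanical.
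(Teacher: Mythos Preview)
Your proof is correct and follows essentially the same route as the paper: twist the $\mathscr{G}$-action on $\mathscr{C}' = \mathscr{C}\otimes_R R'$ by the cocycle, apply Lemma~\ref{L:Descent} to identify the fixed submodule $M_0$ with a right $\mathscr{C}$-module satisfying $M_0^d \simeq \mathscr{C}^d$ via condition~(a), invoke~(*) to get $M_0 \simeq \mathscr{C}$, and read off the coboundary from a generator. The only cosmetic difference is that the paper phrases the invertibility of your element $e$ (their $c$) by noting that left multiplication by it is the base change to $R'$ of the isomorphism $\mathscr{C}\to M_0$, hence bijective; your equality chain $e\mathscr{C}' = M_0\otimes_R R' = \mathscr{C}'$ literally gives only a right inverse, so you should make explicit that the full chain of isomorphisms shows left multiplication by $e$ is bijective on $\mathscr{C}'$.
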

\begin{proof}
We will view $\mathscr{C}$ and $\mathscr{C} \otimes_{R} R'$ as {\it right} $\mathscr{C}$-modules, noting an isomorphism $\mathscr{C} \otimes_{R} R' \simeq \mathscr{C}^d$. Now, given a cocycle $\xi \in Z^1(\mathscr{G} , (\mathscr{C} \otimes_{R} R')^{\times})$, we define a new (twisted) action of $\mathscr{G}$ on $\mathscr{C} \otimes_{R} R'$ by
$$
\sigma^*(a) = \xi(\sigma) \cdot \sigma(a) \ \ \text{for} \ \ \sigma \in \mathscr{G}, \ a \in \mathscr{C} \otimes_{R} R'
$$
(the dot denotes the product in $\mathscr{C} \otimes_{R} R'$). This is a semi-linear action that commutes with the right multiplication by elements of $\mathscr{C}$. Let $M_0$ denote the set of fixed elements under this action; clearly $M_0$ is a right $\mathscr{C}$-module. It follows from
Lemma \ref{L:Descent} that the canonical map $M_0 \otimes_{R} R' \to \mathscr{C} \otimes_{R} R'$ is an isomorphism of $R'$-modules.
Since $\mathscr{C}$ and $R'$ commute inside $\mathscr{C} \otimes_{R} R'$, this map respects the right $\mathscr{C}$-module structures. But as $\mathscr{C}$-modules,
$$
M_0 \otimes_{R} R' \simeq M_0^d \ \ \text{and} \ \ \mathscr{C} \otimes_{R} R' \simeq \mathscr{C}^d.
$$
Thus, $M_0^d \simeq \mathscr{C}^d$ as right $\mathscr{C}$-modules, which in view of (*) implies the existence of a $\mathscr{C}$-isomorphism $f \colon \mathscr{C} \to M_0$. Set $c = f(1)$; then $f(x) = c x$ for all $x \in \mathscr{C}$. Since the extension $$\mathscr{C} \otimes_{R} R' \to M_0 \otimes_{R} R' \simeq \mathscr{C} \otimes_{R} R',$$ which is still given by multiplication by $c$, remains an isomorphism, we conclude that $c \in (\mathscr{C} \otimes_{R} R')^{\times}$. On the other hand, the condition $\sigma^*(c) = c$ means that
$$
c^{-1} \xi(\sigma) \sigma(c) = 1 \ \  \text{for all} \ \  \sigma \in \mathscr{G},
$$
i.e. $\xi$ is equivalent to the trivial cocycle.
\end{proof}

\vskip1mm

Now, let $G = \mathrm{GL}_{1 , C}$ be the algebraic $K$-group associated with $C$; recall that $G(B) = (C \otimes_K B)^{\times}$ for any $K$-algebra $B$. (If $C$ is a central simple algebra and $C = \mathrm{M}_{\ell}(D)$, where $D$ is a central division $K$-algebra, then $G$ can also be described as $\mathrm{GL}_{\ell , D}$.) Next, assume that $\mathscr{C} \subset C$ is a {\it free} $R$-order. The regular representation $C \hookrightarrow \mathrm{M}_m(K)$, where $m = \dim_K C$, written in an $R$-basis of $\mathscr{C}$, gives rise to a faithful $K$-representation $G \hookrightarrow \mathrm{GL}_m$ such that the corresponding groups $G(R)$ and $G(R')$ can be identified with $\mathscr{C}^{\times}$ and $(\mathscr{C} \otimes_{R} R')^{\times}$, respectively. We let $V$ denote the set of discrete valuations associated with height one primes of $R$, and let $V'$ be the set of all extensions of these valuations to $L$ --- this is precisely the set of discrete valuations of $L$ associated with height one primes of $R'$. These sets will remain fixed throughout the rest of the section, so the adele groups $G(\mathbf{A}(K , V))$ and $G(\mathbf{A}(L , V'))$ will be denoted simply by $G(\mathbf{A}(K))$ and $G(\mathbf{A}(L))$.
\begin{cor}\label{C:Descent1}
Assume that condition {\rm (*)} of Proposition \ref{P:Descent} holds. If $G(\mathbf{A}(L)) = G(\mathbf{A}\!^{\infty}(L)) G(L),$ then $G(\mathbf{A}(K)) = G(\mathbf{A}\!^{\infty}(K)) G(K)$.
\end{cor}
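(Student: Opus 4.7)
My plan is to mirror the argument of Proposition \ref{P:CondT}, using Proposition \ref{P:Descent} in place of the finiteness of $H^1(L/K,U)$ used there. First I would take $g \in G(\mathbf{A}(K))$ and view it inside $G(\mathbf{A}(L))$ via the natural inclusion that sends $(s_v)_v$ to the adele with $w$-component $s_{w\vert K}$; then, by hypothesis, factor $g = h\gamma$ with $h \in G(\mathbf{A}\!^{\infty}(L))$ and $\gamma \in G(L)$. Since $g$ is $\mathscr{G}$-invariant for $\mathscr{G} := \Ga(L/K)$, applying an arbitrary $\sigma \in \mathscr{G}$ to this factorization yields $h\gamma = \sigma(h)\sigma(\gamma)$, so that
$$
\xi(\sigma) := \sigma(\gamma)\gamma^{-1} = \sigma(h)^{-1}h
$$
belongs simultaneously to $G(L)$ (diagonally embedded in $G(\mathbf{A}(L))$) and to $G(\mathbf{A}\!^{\infty}(L))$. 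A direct calculation will show $\xi$ is a 1-cocycle whose class depends only on $g$.

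The key identification I need is
$$
G(\mathbf{A}\!^{\infty}(L)) \cap G(L) = (\mathscr{C} \otimes_R R')^{\times},
$$
which follows from the freeness of $\mathscr{C}$ as an $R$-module (so that any $R$-basis of $\mathscr{C}$ is also an $\mathscr{O}_{L,w}$-basis of $\mathscr{C} \otimes_R \mathscr{O}_{L,w}$) combined with $R' = \bigcap_{w \in V'} \mathscr{O}_{L,w}$. Thus $[\xi]$ defines a class in $H^1(\mathscr{G},(\mathscr{C} \otimes_R R')^{\times})$, and this set is trivial by Proposition \ref{P:Descent}, whose hypothesis (*) we are assuming. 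Hence there exists $c \in (\mathscr{C} \otimes_R R')^{\times}$ with $\xi(\sigma) = \sigma(c)c^{-1}$ for every $\sigma \in \mathscr{G}$.

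Rearranging the two formulas for $\xi(\sigma)$ then gives $\sigma(c^{-1}\gamma) = c^{-1}\gamma$ and $\sigma(hc) = hc$ for all $\sigma \in \mathscr{G}$. The first of these shows that $c^{-1}\gamma \in G(L)^{\mathscr{G}} = G(K)$. For the second, I observe that $hc = g\,(c^{-1}\gamma)^{-1}$ already lies in $G(\mathbf{A}(K))$, since $g \in G(\mathbf{A}(K))$ and $c^{-1}\gamma \in G(K) \subset G(\mathbf{A}(K))$. Combined with $hc \in G(\mathbf{A}\!^{\infty}(L))$ and the elementary identification
$$
G(\mathbf{A}(K)) \cap G(\mathbf{A}\!^{\infty}(L)) = G(\mathbf{A}\!^{\infty}(K))
$$
(which uses $\mathscr{O}_{L,w} \cap K = \mathscr{O}_{K,v}$ for $w \vert v$, together with freeness of $\mathscr{C}$ over $R$), I conclude $hc \in G(\mathbf{A}\!^{\infty}(K))$. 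Therefore $g = (hc)(c^{-1}\gamma) \in G(\mathbf{A}\!^{\infty}(K)) \cdot G(K)$, as required.

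The main obstacle I anticipate is the first intersection identification $G(\mathbf{A}\!^{\infty}(L)) \cap G(L) = (\mathscr{C} \otimes_R R')^{\times}$: one has to unwind the unit-group structures and check that integrality at every height-one place $w$ of $L$ is equivalent to integrality over $R'$, which is exactly where freeness of $\mathscr{C}$ over $R$ enters crucially. Once this, together with the minor identification $G(\mathbf{A}(K)) \cap G(\mathbf{A}\!^{\infty}(L)) = G(\mathbf{A}\!^{\infty}(K))$, has been verified, the rest of the argument is the formal diagonal cocycle manipulation above, with Proposition \ref{P:Descent} (which is where conditions (a), (b), and (*) are consumed) supplying the decisive vanishing of $H^1(\mathscr{G}, (\mathscr{C} \otimes_R R')^{\times})$.
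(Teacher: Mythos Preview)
Your argument is correct and is essentially the same as the paper's: the paper proves the corollary by invoking the injective map $\lambda \colon G(\mathbf{A}\!^{\infty}(K)) \backslash G(\mathbf{A}(K)) / G(K) \to H^1(\mathscr{G}, G(R'))$ constructed in \S\ref{S:T} (whose definition is exactly your cocycle $\xi(\sigma) = \sigma(\gamma)\gamma^{-1} = \sigma(h)^{-1}h$), together with the identification $G(R') = (\mathscr{C} \otimes_R R')^{\times}$ already recorded just before the corollary, and then applies Proposition~\ref{P:Descent}. Your write-up simply unpacks that reference explicitly; the only cosmetic difference is that where the paper (in the proof of Proposition~\ref{P:CondT}) concludes $hc \in G(\mathbf{A}\!^{\infty}(K))$ via $G(\mathbf{A}\!^{\infty}(L))^{\mathscr{G}} = G(\mathbf{A}\!^{\infty}(K))$, you instead use $G(\mathbf{A}(K)) \cap G(\mathbf{A}\!^{\infty}(L)) = G(\mathbf{A}\!^{\infty}(K))$, which amounts to the same thing.
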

Indeed, as we discussed in \S\ref{S:T}, in our situation there exists an injective map
$$
\lambda \colon G(\mathbf{A}\!^{\infty}(K)) \backslash G(\mathbf{A}(K)) / G(K) \longrightarrow H^1(\mathscr{G} , G(R')).
$$
But $G(R') = (\mathscr{C} \otimes_{R} R')^{\times}$, so due to condition (*), we have $H^1(\mathscr{G} , G(R')) = 1$ by Proposition \ref{P:Descent}, and our claim follows. \hfill $\Box$

\addtocounter{thm}{1}

\vskip2mm

\noindent {\bf Remark 7.4.} In \S\ref{S:GLn}, we used Bass's Cancellation Theorem \ref{T:Bass1} to derive some finiteness results for $G = \mathrm{GL}_n$ over $R$ for sufficiently large $n$ from Bass's Conjecture \ref{S:GLn}.14 about finite generation of $K_0(R)$. While noncommutative versions of Theorems  \ref{T:Serre1} and \ref{T:Bass1} are available - see \cite[vol. II, Corollaries 5.1.60 and 5.1.61]{Rowen}, they are rather technical. So, we would like to point out that the stable version of descent can be implemented on the basis of information about $K_0$ with relatively little effort. More precisely, we consider the group $K_0(\mathscr{C})$ defined in terms of {\it right} projective $\mathscr{C}$-module, and assume that it  has no $d$-torsion (where $d = [L : K]$), i.e. $d x = 0$ for $x \in K_0(\mathscr{C})$ implies $x = 0$, which, generally speaking, is a weaker condition than (*) in Proposition \ref{P:Descent}. Then the proof of this proposition yields the following fact: For $t \geqslant 1$, we consider the canonical embedding $\tau_t \colon (\mathscr{C} \otimes_{R} R')^{\times} \hookrightarrow \mathrm{GL}_t(\mathscr{C} \otimes_{R} R')$ given by (\ref{E:Can-Emb}). Then for every $\xi \in Z^1(\mathscr{G} , (\mathscr{C} \otimes_{R} R')^{\times})$, there exists $t \geqslant 1$ (depending on $\xi$) such that the image $\xi_t$ of $\xi$ in $Z^1(\mathscr{G} , \mathrm{GL}_t(\mathscr{C} \otimes_{R} R'))$ under $\tau_t$ is equivalent to the trivial cocycle. Indeed, in the proof we introduced the twisted Galois action which led to  a right $\mathscr{C}$-module $M_0$ such that $M_0^d \simeq \mathscr{C}^d$. Using (*), we concluded that $M_0 \simeq \mathscr{C}$, which immediately implied that $\xi$ is equivalent to the trivial cocycle. The absence of $d$-torsion in $K_0(\mathscr{C})$ only tells us that $[M_0] = [\mathscr{C}]$ in $K_0(\mathscr{C})$, i.e. $M_0 \oplus \mathscr{C}^{t-1} \simeq \mathscr{C}^t$ for some $t \geqslant 1$. Then twisting $\mathscr{C}_t := {\rm M}_t(\mathscr{C})$ using $\xi_t$ we obtain $M_{0 t} := (M_0 \oplus \mathscr{C}^{t-1})^t$ which as a right $\mathscr{C}_t$-module is isomorphic to $(\mathscr{C}^t)^t$, i.e. to $\mathscr{C}_t$. But an isomorphism $M_{0 t} \simeq \mathscr{C}_t$ implies that $\xi_t$ is equivalent to the trivial cocycle, as claimed.

\vskip2mm

Now, if it is known that for $G_t = \mathrm{GL}_{t , C}$  we have $G_t(\mathbf{A}(L)) = G_t(\mathbf{A}\!^{\infty}(L)) G_t(L)$ for all sufficiently large $t$, then the proof of Corollary \ref{C:Descent1} in conjunction with the above discussion shows that for every $g \in G_1(\mathbf{A}(K))$, there exists $t \geqslant 1$ (depending on $g$) such that $\tau_t(g) \in G_t(\mathbf{A}(K))$ lies in the principal class $G_t(\mathbf{A}\!^{\infty}(K)) G_t(K)$.

\vskip2mm

Let now $C$ be a finite-dimensional {\it central simple} algebra over $K$, and $G = \mathrm{GL}_{1 , C}$ be the corresponding reductive algebraic group. As above, we fix a free $R$-order $\mathscr{C} \subset C$, and consider a faithful representation of $G$ associated with a basis of this order.

We will now take for $L$ a finite Galois extension of $K$ that splits $C$, i.e. there exists an isomorphism of $L$-algebras $\theta \colon C \otimes_K L \to \mathrm{M}_n(L)$.  Let us make the following assumption

\vskip2mm

 (c) \ $\theta(\mathscr{C}) \subset \mathrm{M}_n(R')$ \ and \ $R' \theta(\mathscr{C}) = \mathrm{M}_n(R')$.

\vskip2mm

\noindent This condition implies that for any $w \in V'$, the map $\theta$ identifies $(\mathscr{C} \otimes_{R} R') \otimes_{R'} \mathscr{O}_{L , w}$ with $\mathrm{M}_n(\mathscr{O}_{L , w})$, and hence $G(\mathscr{O}_{L , w})$ with $\mathrm{GL}_n(\mathscr{O}_{L , w})$. Consequently, $\theta$ enables us to identify the adele group $G(\mathbf{A}(L))$
with $\mathrm{GL}_n(\mathbf{A}(L))$ so that the subgroups $G(\mathbf{A}\!^{\infty}(L))$ and $G(L)$ get identified with $\mathrm{GL}_n(\mathbf{A}\!^{\infty}(L))$ and
$\mathrm{GL}_n(L)$, respectively. In particular, there is a bijection
\begin{equation}\label{E:Bijection}
G(\mathbf{A}\!^{\infty}(L)) \backslash G(\mathbf{A}(L)) / G(L) \longrightarrow \mathrm{GL}_n(\mathbf{A}\!^{\infty}(L)) \backslash \mathrm{GL}_n(\mathbf{A}(L)) / \mathrm{GL}_n(L).
\end{equation}
We now observe that since our goal is to develop an approach for verifying Condition (T), we can freely drop from $V$ any finite number of valuations, in other words, to replace $R$ with its localization with respect to a nonzero $a \in R$. On the other hand, by applying such localization we can ensure that conditions (a), (b) and (c) hold. Moreover, the results of \S\ref{S:GLn} imply that if $R$ is a finitely generated $\Z$-algebra of Krull dimension $\leqslant 2$ and we assume the truth of Bass's conjecture, then by localizing it further we may suppose that
$$
\mathrm{GL}_m(\mathbf{A}(L)) = \mathrm{GL}_m(\mathbf{A}\!^{\infty}(L)) \mathrm{GL}_m(L) \ \ \text{for all} \ \ m \geqslant 3.
$$
In view of the bijection (\ref{E:Bijection}), this means that if $C$ is of degree $n \geqslant 3$ (e.g. $C = \mathrm{M}_{\ell}(D)$ where $\ell \geqslant 2$ and $D$ is a quaternion algebra) then $G(\mathbf{A}(L)) = G(\mathbf{A}\!^{\infty}(L)) G(L)$. Combining this with Corollary \ref{C:Descent1}, we obtain the following.
\begin{thm}\label{T:Descent}
Let $K$ be the fraction field of a finitely generated integrally closed $\Z$-algebra $R$ of Krull dimension $\leqslant 2$, and assume that Bass's conjecture holds. Suppose that $C$ is a central simple $K$-algebra of degree $n \geqslant 3$ that admits a free $R$-order $\mathscr{C}$ for which condition $(*)$ of Proposition \ref{P:Descent} holds, with $d$ being the degree of some Galois extension $L/K$ that splits $C$. Then Condition {\rm (T)} holds for the $K$-group $G = \mathrm{GL}_{1 , C}$.
\end{thm}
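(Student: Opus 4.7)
The plan is to chain together the results on $\mathrm{GL}_n$ already established in Section \ref{S:GLn} with the Galois descent developed above. Since Condition (T) only asserts the existence of some finite $S \subset V$ with $\vert \mathrm{Cl}(G, K, V \setminus S) \vert = 1$, we are free to replace $R$ by any localization $R_a$ with $a \in R \setminus \{0\}$ throughout the argument, and we will do so tacitly several times.

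I would first arrange, by a finite sequence of localizations, that the following technical hypotheses hold simultaneously: (i) both $R$ and its integral closure $R'$ in $L$ are regular (this is possible since $R$ is a finitely generated $\Z$-algebra, hence excellent, so the regular loci of $\mathrm{Spec}\: R$ and $\mathrm{Spec}\: R'$ are open and nonempty); (ii) $R'$ is a free $R$-module and the trace-form discriminant of an $R$-basis of $R'$ is a unit of $R$ -- i.e. conditions (a) and (b) of the descent set-up; and (iii) condition (c) holds, namely the splitting $\theta \colon C \otimes_K L \to \mathrm{M}_n(L)$ restricts to an identification $\mathscr{C} \otimes_R R' \simeq \mathrm{M}_n(R')$ of $R'$-orders. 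For (iii), observe that both $\theta(\mathscr{C} \otimes_R R')$ and $\mathrm{M}_n(R')$ are $R'$-orders in the central simple $L$-algebra $\mathrm{M}_n(L)$, so they agree after inverting a single nonzero element of $R$.

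Next, invoking Bass's Conjecture \ref{S:GLn}.14, I apply Corollary \ref{C:Ap6} to the ring $R'$ and the field $L$, which are of the admissible type (regular noetherian domain of Krull dimension $\leqslant 2$ that is a finitely generated $\Z$-algebra). After one further shrinking, we may assume that $\mathrm{GL}_m$ satisfies Condition (T) over $L$ with respect to $V^L$ for every $m \geqslant 3$. In view of the hypothesis $n \geqslant 3$, this yields
$$
\mathrm{GL}_n(\mathbf{A}(L)) = \mathrm{GL}_n(\mathbf{A}\!^{\infty}(L)) \, \mathrm{GL}_n(L),
$$
which, via the bijection (\ref{E:Bijection}) supplied by (iii), translates into $G(\mathbf{A}(L)) = G(\mathbf{A}\!^{\infty}(L)) G(L)$.

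Finally, since condition (*) of Proposition \ref{P:Descent} holds by hypothesis with the given $d = [L : K]$, Corollary \ref{C:Descent1} applies and gives $G(\mathbf{A}(K)) = G(\mathbf{A}\!^{\infty}(K)) G(K)$, which is precisely Condition (T) for $G$ (with respect to the height one places of the final localized $R$). The only genuine obstacle is organizational: one must verify that all the successive shrinkings -- regularity of $R$ and $R'$, conditions (a)--(c), and Condition (T) for $\mathrm{GL}_m$ over $L$ for $m \geqslant 3$ -- can be combined into inversion of a single nonzero element $a \in R$. This is automatic, since each of the finitely many requirements fails only on a proper Zariski-closed subset of $\mathrm{Spec}\: R$, so their union is proper closed and its complement contains a principal open set. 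All technical content is packaged in the preceding corollaries; the theorem is essentially their synthesis.
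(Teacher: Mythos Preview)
Your proof is correct and follows essentially the same route as the paper: localize to ensure (a), (b), (c), invoke the results of \S\ref{S:GLn} (via Bass's conjecture) to obtain $\mathrm{GL}_n(\mathbf{A}(L)) = \mathrm{GL}_n(\mathbf{A}\!^{\infty}(L))\,\mathrm{GL}_n(L)$, transport this through (\ref{E:Bijection}) to $G$, and then apply Corollary \ref{C:Descent1}. You are in fact more careful than the paper on one point: Corollary \ref{C:Ap6} requires $R'$ to be regular, and you explicitly arrange this via excellence, whereas the paper just says ``the results of \S\ref{S:GLn} imply'' without flagging the regularity hypothesis.
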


\vskip2mm

Now, keep the above assumptions {\it except} condition (*) of Proposition \ref{P:Descent} and replace the latter with the assumption that $K_0(\mathscr{C})$ has no $d$-torsion. Then using Remark 7.4 instead Corollary \ref{C:Descent1}  in the proof of the above theorem, one obtains a ``stable" version of condition (T): For $t \geqslant 1$, let $G_t = \mathrm{GL}_{t , C}$ and let $\tau_t \colon G_1 \hookrightarrow G_t$ be the canonical embedding. Then by dropping from $V$ finitely many valuations, one can ensure that for any $g \in G_1(\mathbf{A}(K))$, there exists $t \geqslant 1$ (depending on $g$) such that $\tau_t(g) \in G_t(\mathbf{A}(K))$ lies in the principal class $G_t(\mathbf{A}\!^{\infty}(K)) G_t(K)$.
As we already pointed out in \S\ref{S:G2}, the stable version is sufficient for the relevant results in that section, cf. Remarks 3.6  and 3.9.

\vskip2mm

\noindent {\small  {\bf Acknowledgements.} The first author was supported by an NSERC research grant. During the preparation of the paper, the second author visited Princeton University and the Institute for Advanced Study on a Simons Fellowship; the hospitality of both institutions and the generous support of the Simons Foundation are thankfully acknowledged. The third author was partially supported by an AMS-Simons Travel Grant. We would like to thank the anonymous referee for offering a number of corrections and valuable suggestions. We are also grateful to Louis Rowen for useful discussions.}

\bibliographystyle{amsplain}

\end{document}